\providecommand{\tabularnewline}{\\}
\providecommand{\algorithmname}{Algorithm}
\DeclareRobustCommand{\lyxsout}[1]{\ifx\\#1\else\sout{#1}\fi}
\numberwithin{equation}{section}
\numberwithin{figure}{section}
\theoremstyle{plain}
\newtheorem{thm}{\protect\theoremname}
\theoremstyle{plain}
\newtheorem{lem}[thm]{\protect\lemmaname}
\theoremstyle{plain}
\newtheorem{cor}[thm]{\protect\corollaryname}
\theoremstyle{remark}
\newtheorem{rem}[thm]{\protect\remarkname}
\providecommand{\corollaryname}{Corollary}
\providecommand{\lemmaname}{Lemma}
\providecommand{\remarkname}{Remark}
\providecommand{\theoremname}{Theorem}
\begin{document}
\title{Reduction of multivariate mixtures and its applications}
\author{Gregory Beylkin, Lucas Monz\'{o}n and Xinshuo Yang }
\address{Department of Applied Mathematics \\
 University of Colorado at Boulder \\
 UCB 526 \\
 Boulder, CO 80309-0526 }
\begin{abstract}
We consider fast deterministic algorithms to identify the ``best''
linearly independent terms in multivariate mixtures and use them to
compute, up to a user-selected accuracy, an equivalent representation
with fewer terms. One algorithm employs a pivoted Cholesky decomposition
of the Gram matrix constructed from the terms of the mixture to select
what we call skeleton terms and the other uses orthogonalization for
the same purpose. Importantly, the multivariate mixtures do not have
to be a separated representation of a function. Both algorithms require
$\mathcal{O}\left(r^{2}N+p\left(d\right)r\thinspace N\right)$ operations,
where $N$ is the initial number of terms in the multivariate mixture,
$r$ is the number of selected linearly independent terms, and $p\left(d\right)$
is the cost of computing the inner product between two terms of a
mixture in $d$ variables. For general Gaussian mixtures $p\left(d\right)\sim d^{3}$
since we need to diagonalize a $d\times d$ matrix, whereas for separated
representations $p\left(d\right)\sim d$ (there is no need for diagonalization).
Due to conditioning issues, the resulting accuracy is limited to about
one half of the available significant digits for both algorithms.
We also describe an alternative algorithm that is capable of achieving
higher accuracy but is only applicable in low dimensions or to multivariate
mixtures in separated form. 

We describe a number of initial applications of these algorithms to
solve partial differential and integral equations and to address several
problems in data science. For data science applications in high dimensions,
we consider the kernel density estimation (KDE) approach for constructing
a probability density function (PDF) of a cloud of points, a far-field
kernel summation method and the construction of equivalent sources
for non-oscillatory kernels (used in both, computational physics and
data science) and, finally, show how to use the new algorithm to produce
seeds for subdividing a cloud of points into groups.
\end{abstract}

\maketitle

\section{Introduction}

We present (what we call) reduction algorithms for computing with
multivariate mixtures that allow us to obtain solutions of PDEs in
high dimensions as well as to address several problems in data science.
We describe a new approach for solving partial differential and integral
equations in a functional form, consider a far-field kernel summation
method and the construction of equivalent sources for non-oscillatory
kernels. As an illustration of data science applications, we present
examples of using these reduction algorithms for kernel density estimation
(KDE) to construct a probability density function (PDF) of a cloud
of points and to generate seeds for subdividing a cloud of points
into groups.

We use the well-known pivoted Cholesky factorization\footnote{Pivoted Cholesky decomposition of the Gram matrix was used by Martin
Mohlenkamp (Ohio University) and G.B. for the reduction of the number
of terms in separated representations (see comments in \cite{BI-BE-BE:2015}).} as well as a version of modified Gram-Schmidt orthogonalization to
identify the ``best'' linearly independent terms from a collection
of functions. The renewed interest in this problem is due to two observations:
(i) the approach can be used for more general multivariate mixtures
than the separated representations in \cite{BEY-MOH:2002,BEY-MOH:2005}
and (ii) multivariate Gaussian mixtures can achieve any target accuracy
when approximating functions since a multiresolution analysis can
employ a Gaussian as an approximate scaling function \cite{BE-MO-SA:2017}.
The first observation makes our approach so far the only choice for
reduction of general multivariate mixtures while the second assures
that a multivariate Gaussian mixture (and its modifications that e.g.\ include
polynomial factors) is sufficient to represent an arbitrary function
while allowing us to exploit the fact that integrals involving Gaussian
mixtures can be evaluated explicitly. We expand further on these observations
below. 

We consider multivariate functions that can be approximated via a
linear combination of multivariate atoms, 
\begin{equation}
u\left(\mathbf{x}\right)=\sum_{l=1}^{r}c_{l}g_{l}\left(\mathbf{x}\right),\,\,\,\mathbf{x}\in\mathbb{R}^{d},\label{eq:Representation via multivariate atoms}
\end{equation}
such that the inner product between the atoms,
\begin{equation}
\langle g_{l},g_{l'}\rangle=\int_{\mathbb{R}^{d}}g_{l}\left(\mathbf{x}\right)g_{l'}\left(\mathbf{x}\right)d\mathbf{x},\label{eq:inner product}
\end{equation}
can be computed efficiently. We normalize the atoms so that they have
unit $L^{2}$-norm
\[
\left\Vert g_{l}\right\Vert _{2}=\sqrt{\langle g_{l},g_{l}\rangle}=1.
\]
A particularly important example are multivariate Gaussian atoms yielding
a multivariate Gaussian mixture. In this case the atoms are
\begin{eqnarray}
g_{l}\left(\mathbf{x},\boldsymbol{\mu}_{l},\boldsymbol{\Sigma}_{l}\right) & = & \left(\det\left(4\pi\boldsymbol{\Sigma}_{l}\right)\right)^{\frac{1}{4}}N\left(\mathbf{x},\boldsymbol{\mu}_{l},\boldsymbol{\Sigma}_{l}\right)\label{eq:Gaussian_atoms}\\
 & = & \frac{1}{\left(\det\left(\pi\boldsymbol{\Sigma}_{l}\right)\right)^{1/4}}\exp\left(-\frac{1}{2}\left(\mathbf{x}-\boldsymbol{\mu}_{l}\right)^{T}\boldsymbol{\Sigma}_{l}^{-1}\left(\mathbf{x}-\boldsymbol{\mu}_{l}\right)\right),\nonumber 
\end{eqnarray}
where $N\left(\mathbf{x},\boldsymbol{\mu}_{l},\boldsymbol{\Sigma}_{l}\right)$
is the standard multivariate Gaussian distribution with mean $\boldsymbol{\mu}_{l}$,
symmetric positive definite covariance matrix $\boldsymbol{\Sigma}_{l}\in\mathbb{R}^{d\times d}$,
and $\left\Vert g_{l}\right\Vert _{2}=1$. Already in early quantum
chemistry computations, Gaussian mixtures were used because integrals
involving them can be computed explicitly (see e.g.\ \cite{BOYS:1960,LON-SIN:1960,SINGER:1960}).
Indeed, integrals with Gaussians (and (\ref{eq:inner product}) in
particular), can be evaluated explicitly in any dimension. Multivariate
Gaussian mixtures (as well as other multivariate atoms) are used within
the Radial Basis Functions (RBF) approach (see e.g.\ \cite{FOR-FLY:2015a}
and references therein). A method for approximating smooth functions
via Gaussians and a number of its applications have been developed
in \cite{MAZ-SCH:2007}.

As was demonstrated previously, a number of key operators of mathematical
physics can be efficiently represented via Gaussians leading to their
separated representations (see e.g.\ \cite{BEY-MOH:2002,BEY-MOH:2005,BEY-MON:2005,BEY-MON:2010,B-F-H-K-M:2012})
and, as a consequence, to practical algorithms (see \cite{H-F-Y-G-B:2004,Y-F-G-H-B:2004,Y-F-G-H-B:2004a,H-B-B-C-F-F-G-etc:2016}).
Importantly, as it was demonstrated recently, for any finite but arbitrary
accuracy a Gaussian can serve as a scaling function of an approximate
multiresolution analysis \cite{BE-MO-SA:2017}.

These considerations combined with algorithms of this paper to reduce
the number of terms in (\ref{eq:Representation via multivariate atoms})
suggest a new type of numerical algorithms. The basic idea of such
algorithms is simple: in the process of iteratively solving equations,
we represent both \textit{operators and} \textit{functions} via Gaussians
and, at each iteration step, compute the required integrals explicitly.
The difficulty of this approach is then how to deal with a rapid proliferation
of terms in the resulting Gaussian mixtures. For example, if an integral
involves three Gaussian mixtures with $100$ terms each, the resulting
Gaussian mixture has $10^{6}$ terms. However, in most practical applications
most of these terms are close to be linearly dependent and, thus,
to represent the result, we only need a fast algorithm to find the
``best'' linearly independent subset of the terms. We describe (what
we call) reduction algorithms to maintain a reasonable number of terms
in intermediate computations when using these representations. 

The multivariate mixtures that we consider (and construct algorithms
for) can be significantly more general than the separated representations
introduced in \cite{BEY-MOH:2002,BEY-MOH:2005} for the purpose of
computing in higher dimensions by avoiding ``the curse of dimensionality''.
Recall that a separated representation is a natural extension of the
usual separation of variables as we seek an approximation 
\begin{equation}
f\left(x_{1},\ldots,x_{d}\right)=\sum_{l=1}^{r}s_{l}\phi_{1}^{(l)}\left(x_{1}\right)\cdots\phi_{d}^{(l)}\left(x_{d}\right)+\mathcal{O}\left(\epsilon\right),\label{eqn:sumsep}
\end{equation}
where the functions $\phi_{j}^{(l)}(x_{j})$ are normalized by the
standard $L^{2}$-norm, $\Vert\phi_{j}^{(l)}\Vert_{2}=1$ and $s_{l}>0$
are referred to as $s$-values. In this approximation the functions
$\phi_{j}^{(l)}(x_{j})$ are not fixed in advance but are optimized
as to achieve the accuracy goal with (ideally) a minimal separation
rank $r$. Importantly, a separated representation is not a projection
onto a subspace, but rather a nonlinear method to track a function
in a high-dimensional space while using a small number of parameters.
The key to obtaining useful separated representations is to use the
Alternating Least Squares (ALS) algorithm to reduce the separation
rank while maintaining an acceptable error. ALS is one of the key
tools in numerical multilinear algebra and was originally introduced
for data fitting as PARAFAC model (PARAllel FACtor analysis) \cite{HARSHM:1970}
and CANDECOMP (Canonical Tensor Decomposition) \cite{CAR-CHA:1970}.
ALS has been used extensively in data analysis of (mostly) three-way
arrays (see e.g.\ the reviews \cite{TOM-BRO:2006,BRO:1997}, \cite{KOL-BAD:2009}
and references therein). We note that any discretization of $f$ in
(\ref{eqn:sumsep}) leads to a $d$-dimensional tensor $\mathcal{U}\in\mathbb{R}^{M_{1}\times\cdots\times M_{d}}$
yielding a canonical tensor decomposition of separation rank $r$,
\begin{equation}
\mathcal{U}_{i_{1},\dots,i_{d}}=\sum_{l=1}^{r}\sigma_{l}\prod_{j=1}^{d}u_{i_{j}}^{(l)},\label{eq:sep-rep-algebraic}
\end{equation}
where the $s$-values $\sigma_{l}$ are chosen so that each vector
$\mathbf{u}_{j}^{(l)}=\left\{ u_{i_{j}}^{(l)}\right\} _{i_{j}=1}^{M_{j}}$
has unit Frobenius norm $\Vert\mathbf{u}_{j}^{(l)}\Vert_{F}=1$ for
all $j,l$. However, the ALS algorithm relies heavily on the separated
form (\ref{eqn:sumsep}-\ref{eq:sep-rep-algebraic}) and is not available
for general multivariate mixtures. 

In this paper we detail and use algorithms to reduce the number of
terms in multivariate mixtures that do not necessarily admit a separated
representation. In spite their accuracy limitations, these algorithms
have several advantages as their complexity depends mildly on the
number of variables, the dimension $d$. The ``mild'' dependence
should be understood in the context of the ``curse of dimensionality''
arising when fast algorithms in low dimensions are extended to high
dimensions in a straightforward manner. Fast reduction algorithms
of this paper require $\mathcal{O}\left(r^{2}N+p\left(d\right)r\,N\right)$
operations, where $N$ is the initial number of terms in a multivariate
mixture, $r$ is the number of selected terms and $p\left(d\right)$
is the cost of computing the inner product between two terms of the
mixture. We also describe an algorithm capable of achieving higher
accuracy by avoiding the loss of precision due to conditioning issues
but which currently is only applicable in low dimensions or to multivariate
mixtures in separated form.

We start by describing reduction algorithms in Section~\ref{sec:Reduction-Algorithms}.
We then present several examples of using a reduction algorithm for
solving equations in Section~\ref{sec:Applications-of-reduction}.
In Section~\ref{sec:Kernel-Density-Estimation} we show how to apply
our approach to construct the PDF for a cloud of points in high dimensions
using the KDE approach. We then turn to far-field summation in high
dimensions in Section~\ref{sec:Far-field-summation-in} and demonstrate
how to use a reduction algorithm in this problem and for the problem
of finding equivalent sources. We also describe how to use a reduction
algorithm for splitting a cloud of points into groups (potentially
in a hierarchical manner) and briefly mention properties of such subdivisions.
Conclusions are presented in Section~\ref{sec:Conclusions-and-further}
and some key identities for multivariate Gaussians in the Appendix.

\section{\label{sec:Reduction-Algorithms}Reduction Algorithms}

In this section we describe fast deterministic algorithms to reduce
the number of terms of a linear combination of multivariate functions
of $d$ variables by selecting the ``best'' subset of these functions
that can, within a target accuracy, approximate the rest of them.
The first algorithm is based on a pivoted Cholesky decomposition of
the Gram matrix of the terms of the multivariate mixture; we assume
that the entries of this matrix, i.e.\ the inner product of these
functions, can be computed efficiently. This algorithm was mentioned
in the discussion of tensor interpolative decomposition (tensor ID)
of the canonical tensor representation in \cite{BI-BE-BE:2015}. Due
to the use of a Gram matrix, the accuracy of this approach is limited
to about one half of the available significant digits (e.g.\ $10^{-7}\sim10^{-8}$
when using double precision arithmetic). The second algorithm is based
on a pivoted modified Gram-Schmidt orthogonalization and achieves
the same accuracy (due to conditioning issues) as the first algorithm.
Nevertheless, these algorithms appear advantageous in high dimensions
since their complexities depend mildly on dimension and, if desired,
full accuracy can be restored by performing some evaluations in higher
precision. We also describe an alternative approach that achieves
full precision, but so far is limited to low dimensions or mixtures
in separated form. For this alternative reduction algorithm, we need
access to the Fourier transform of the functions in the mixture. Since
the Fourier transform is readily available for Gaussian atoms, we
present this algorithm for the case of Gaussian mixtures and note
that it can be used for any functional form that allows a rapid computation
of the integrals involved.

\subsection{Cholesky reduction}

We start with a linear combination of atoms of the form
\begin{equation}
u\left(\mathbf{x}\right)=\sum_{l=1}^{N}c_{l}g_{l}\left(\mathbf{x}\right),\ \ \ \mathbf{x}\in\mathbb{R}^{d}\label{eq:multivariate mixture}
\end{equation}
and, within a user-selected accuracy $\epsilon$, seek a representation
of the same form but with fewer terms. More precisely, we look for
a partition of indices $I=\left[\widehat{I},\widetilde{I}\right],$
where $\widehat{I}=\left[i_{1},i_{2},\cdots i_{r}\right]$ and $\widetilde{I}=\left[i_{r+1},i_{r+2},\cdots,i_{N}\right]$,
and new coefficients $\tilde{c}_{i_{m}},m=1,\cdots r$, such that
the function
\begin{equation}
\widetilde{u}\left(\mathbf{x}\right)=\sum_{m=1}^{r}\tilde{c}_{i_{m}}g_{i_{m}}\left(\mathbf{x}\right),\ \ \ r\ll N,\label{eq:function via skeletons terms-1}
\end{equation}
approximates $u$,
\begin{equation}
u\left(\mathbf{x}\right)\approx\widetilde{u}\left(\mathbf{x}\right).\label{eq:reduced rep}
\end{equation}
We present an algorithm based on a partial, pivoted Cholesky decomposition
of the Gram matrix constructed using the atoms $g_{l}$ in (\ref{eq:multivariate mixture})
and provide an estimate for the error in (\ref{eq:reduced rep}).

By analogy with the matrix Interpolative Decomposition (matrix-ID)
(see e.g.\ \cite{HA-MA-TR:2011}), we call the subset $\left\{ g_{i_{m}}\right\} _{m=1}^{r}$
the skeleton terms and $\left\{ g_{i_{m}}\right\} _{m=r+1}^{N}$ the
residual terms. In order to identify the ``best'' subset of linear
independent terms, we compute a pivoted Cholesky decomposition of
the Gram matrix of the atoms of the multivariate mixture in (\ref{eq:multivariate mixture}).
If the number of terms, $N$, is large then the cost of the full Cholesky
decomposition is prohibitive. However, we show that we can terminate
the Cholesky decomposition once the pivots are below a selected threshold.
As a result, the complexity of the algorithm is $\mathcal{O}\left(r^{2}N+p\left(d\right)r\,N\right)$,
where $N$ is the initial number of terms, $r$ is the number of selected
(skeleton) terms and $p\left(d\right)$ is the cost of computing the
inner product between two terms of a mixture in $d$ variables. In
fact, the final result will be the same as if we were to perform the
full decomposition and then keep only the significant terms. This
property is a consequence of the following lemma that can be found
in e.g.\ \cite[p.434, problem 7.1.P1]{HOR-JOH:2013}.
\begin{lem}
\label{lem:Let--B be a selfadjoint non-negative} Let $\mathbf{B}\in\mathbb{C}^{n\times n}$
be positive semi-definite and self-adjoint, i.e.\ $\mathbf{x}^{*}\mathbf{B}\mathbf{x}\ge0$
and $\mathbf{B}=\mathbf{B}^{*}$ for any $\mathbf{x}\in\mathbb{C}^{n}$.
Then its diagonal entries $b_{ii}$ are non-negative and the entries
$b_{ij}$ of~ $\mathbf{B}$ satisfy
\begin{equation}
\left|b_{ij}\right|\le\sqrt{b_{ii}b_{jj}}.\label{eq:Lemma_ineq}
\end{equation}
In particular, assuming that the first $i$ diagonal entries are in
descending order and are greater or equal than the remaining diagonal
entries,
\[
b_{11}\ge b_{22}\ge\dots\ge b_{ii}\ge b_{i+1,i+1},b_{i+2,i+2},\dots b_{nn},
\]
we have 
\begin{equation}
\left|b_{ij}\right|\le b_{ii},\,\,\,\mbox{for all }\,\,\,j\ge i.\label{eq:key estimate}
\end{equation}
\end{lem}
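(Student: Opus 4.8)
The plan is to prove the two displayed inequalities in turn: \eqref{eq:Lemma_ineq} by reducing to a $2\times2$ principal submatrix, and \eqref{eq:key estimate} as an immediate corollary once the monotonicity hypothesis on the diagonal is taken into account.

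First I would note that $b_{ii}=\mathbf{e}_i^{*}\mathbf{B}\,\mathbf{e}_i\ge0$, where $\mathbf{e}_i$ is the $i$-th standard basis vector, so every diagonal entry is non-negative. For \eqref{eq:Lemma_ineq} with $i\ne j$, I would pass to the $2\times2$ principal submatrix $\mathbf{B}_{ij}=\begin{pmatrix}b_{ii}&b_{ij}\\ b_{ji}&b_{jj}\end{pmatrix}$. It is self-adjoint since $b_{ji}=\overline{b_{ij}}$, and it is positive semi-definite because every $\mathbf{y}\in\mathbb{C}^{2}$ extends by zeros to some $\mathbf{x}\in\mathbb{C}^{n}$ with $\mathbf{y}^{*}\mathbf{B}_{ij}\,\mathbf{y}=\mathbf{x}^{*}\mathbf{B}\,\mathbf{x}\ge0$. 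A self-adjoint positive semi-definite $2\times2$ matrix has non-negative eigenvalues, hence non-negative determinant, i.e.\ $b_{ii}b_{jj}-\left|b_{ij}\right|^{2}\ge0$, which is precisely \eqref{eq:Lemma_ineq}. (Alternatively, substituting $\mathbf{x}=t\,\mathbf{e}_i-\zeta\,\mathbf{e}_j$ into $\mathbf{x}^{*}\mathbf{B}\,\mathbf{x}\ge0$, with $\left|\zeta\right|=1$ chosen so that $\zeta\,b_{ji}\ge0$, and minimizing the resulting real quadratic in $t$ gives the same bound.)

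To obtain \eqref{eq:key estimate}, I would split into $j=i$ and $j>i$. For $j=i$ we simply have $\left|b_{ii}\right|=b_{ii}$ because $b_{ii}\ge0$. For $j>i$ the ordering assumption gives $b_{jj}\le b_{ii}$, so \eqref{eq:Lemma_ineq} yields $\left|b_{ij}\right|\le\sqrt{b_{ii}b_{jj}}\le\sqrt{b_{ii}b_{ii}}=b_{ii}$, which finishes the proof.

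I do not expect a genuine obstacle here; the only step meriting a line of care is the remark that a principal submatrix of a positive semi-definite matrix is again positive semi-definite (equivalently, the admissibility of the test vector in the alternative argument). Since the statement is quoted as a textbook exercise, see \cite[p.434, problem 7.1.P1]{HOR-JOH:2013}, the write-up is just this short self-contained verification.
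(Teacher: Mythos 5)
Your proof is correct and follows essentially the same route as the paper: both arguments reduce to positivity on the two-dimensional coordinate subspace spanned by $\mathbf{e}_i$ and $\mathbf{e}_j$ (the paper substitutes the explicit optimizing vector $x_i=(b_{jj}/b_{ii})^{1/4}$, $x_j=-e^{i\theta_{ij}}(b_{ii}/b_{jj})^{1/4}$, which is just the test-vector variant you mention in parentheses), and then deduce \eqref{eq:key estimate} from the ordering of the diagonal. The only cosmetic difference is that your determinant formulation of the $2\times2$ case sidesteps the division by $b_{ii}$ or $b_{jj}$ that the paper's explicit substitution tacitly assumes to be nonzero.
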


\begin{proof}
Let $\left\{ \mathbf{e_{i}}\right\} _{1\leq i\leq n}$ be the standard
basis vectors, that is, $\left(\mathbf{e_{i}}\right)_{j}=\delta_{ij}$.
The diagonal entries are non-negative since, for any index $i$, $b_{ii}=\mathbf{\mathbf{e_{i}}}^{*}\mathbf{B}\mathbf{\mathbf{e_{i}}}\ge0$.
We now use the same approach to estimate the size of an off-diagonal
entry $b_{ij}=\left|b_{ij}\right|e^{i\theta_{ij}}$. For the vector
$\mathbf{x}=x_{i}\mathbf{e_{i}}+x_{j}\mathbf{e_{j}}$ we have 
\begin{equation}
0\le\mathbf{\mathbf{x}^{*}\mathbf{B}\mathbf{x}}=b_{ii}x_{i}\overline{x_{i}}+b_{ij}x_{i}\overline{x_{j}}+\overline{b_{ij}}\overline{x_{i}}x_{j}+b_{jj}x_{j}\overline{x_{j}}.\label{eq:two-term inequality}
\end{equation}
Setting
\[
x_{i}=\left(\frac{b_{jj}}{b_{ii}}\right)^{1/4}\,\,\,\mbox{and}\,\,\,x_{j}=-e^{i\theta_{ij}}\left(\frac{b_{ii}}{b_{jj}}\right)^{1/4}
\]
in (\ref{eq:two-term inequality}), we obtain
\[
0\le b_{ii}\left(\frac{b_{jj}}{b_{ii}}\right)^{1/2}-\left|b_{ij}\right|-\left|b_{ij}\right|+b_{jj}\left(\frac{b_{ii}}{b_{jj}}\right)^{1/2}=2\sqrt{b_{ii}b_{jj}}-2\left|b_{ij}\right|.
\]
Thus, we arrive at 
\begin{equation}
\left|b_{ij}\right|\le\sqrt{b_{ii}b_{jj}}\le\frac{b_{ii}+b_{jj}}{2}.\label{eq:key inequality}
\end{equation}
For the second part of the lemma, selecting $i\le j$ implies that
$b_{ii}\ge b_{jj}$ and, thus, (\ref{eq:key estimate}) follows from
the last inequality in (\ref{eq:key inequality}).
\end{proof}
Lemma~\ref{lem:Let--B be a selfadjoint non-negative} implies
\begin{cor}
\label{cor: eps estimate}Let $G$ be a self-adjoint positive semi-definite
matrix such that its Cholesky decomposition has monotonically decaying
diagonal entries. If we write its Cholesky decomposition as
\[
G=\left(\begin{array}{cc}
L_{r} & 0\\
W & Q
\end{array}\right)\left(\begin{array}{cc}
L_{r}^{*} & W^{*}\\
0 & Q^{*}
\end{array}\right),
\]
where $L_{r}$ is an $r\times r$ lower triangular matrix with the
smallest diagonal entry $\epsilon>0$, then a partial Cholesky decomposition
is of the form 
\[
G=\left(\begin{array}{cc}
L_{r} & 0\\
W & 0
\end{array}\right)\left(\begin{array}{cc}
L_{r}^{*} & W^{*}\\
0 & 0
\end{array}\right)+\left(\begin{array}{cc}
0 & 0\\
0 & QQ^{*}
\end{array}\right),
\]
where all entries of the matrix $QQ^{*}$ are less than $\epsilon$.
\end{cor}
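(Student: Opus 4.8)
The plan is to split the statement into a purely algebraic identity and a single quantitative estimate, and to reduce the latter to Lemma~\ref{lem:Let--B be a selfadjoint non-negative}. First I would establish the displayed block decomposition of $G$ by direct multiplication: expanding the full Cholesky product in the statement gives the $2\times2$ block matrix with blocks $L_rL_r^*$, $L_rW^*$, $WL_r^*$ and $WW^*+QQ^*$, whereas replacing $Q$ by $0$ in the first factor yields the same matrix with $WW^*$ in place of $WW^*+QQ^*$, so the difference is exactly the block matrix that equals $QQ^*$ in the lower-right corner and vanishes elsewhere. This is bookkeeping, but it simultaneously identifies $QQ^*$ with the Schur complement $G_{22}-G_{21}G_{11}^{-1}G_{12}$ of the leading block $G_{11}=L_rL_r^*$ (which is invertible, its diagonal entries being $\ge\epsilon>0$), equivalently with the residual matrix $S^{(r)}$ left after $r$ steps of the pivoted Cholesky factorization. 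As a product $QQ^*$, this matrix is manifestly self-adjoint and positive semi-definite, so Lemma~\ref{lem:Let--B be a selfadjoint non-negative} applies to it; in particular \eqref{eq:Lemma_ineq} gives $|(QQ^*)_{ij}|\le\sqrt{(QQ^*)_{ii}(QQ^*)_{jj}}$, and it remains only to bound the diagonal entries of $QQ^*$.

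The quantitative core is the claim that every diagonal entry of $QQ^*=S^{(r)}$ is $\le\epsilon^2$, which I would obtain from two elementary facts about Cholesky elimination. (i) One elimination step can only decrease diagonal entries: it replaces each diagonal entry $S_{ii}$ of the current Schur complement by $S_{ii}-|S_{ik}|^2/S_{kk}\le S_{ii}$, where $k$ is the pivot position; hence the diagonal of $S^{(r)}$ is dominated entrywise by that of $S^{(r-1)}$, and in turn by that of $S^{(r-2)}$, and so on. (ii) In the pivoted Cholesky factorization the $r$-th pivot is, by construction, the \emph{largest} diagonal entry of the Schur complement $S^{(r-1)}$ entering step $r$ -- this is precisely what makes the diagonal of the Cholesky factor monotonically non-increasing, the standing hypothesis on $G$ -- and that pivot equals the square of the $r$-th diagonal entry of $L_r$, i.e.\ $\epsilon^2$, since $\epsilon$ is the smallest (hence, by monotonicity, the last) diagonal entry of $L_r$. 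Combining (i) and (ii), every diagonal entry of $S^{(r-1)}$, and therefore of $S^{(r)}=QQ^*$, is at most $\epsilon^2$; feeding this into \eqref{eq:Lemma_ineq} yields $|(QQ^*)_{ij}|\le\epsilon^2$ for all $i,j$. Finally, since here $G$ is the Gram matrix of atoms normalized by $\|g_l\|_2=1$, all diagonal entries of $G$ equal $1$, so $\epsilon\le1$ and $\epsilon^2\le\epsilon$, which is the asserted bound.

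The one point that needs care is the reading of the hypothesis ``monotonically decaying diagonal entries'': it has to be understood as saying that $G$ has been ordered according to the pivoting rule -- largest current Schur-complement diagonal first -- since it is that rule, and not merely monotonicity of the final factor diagonal, that forces every diagonal entry of the $r$-step residual to lie below the $r$-th pivot $\epsilon^2$. Once this is granted the rest is routine: the Schur-complement identity, the monotonicity of the pivots, and the elementary inequality \eqref{eq:Lemma_ineq} of Lemma~\ref{lem:Let--B be a selfadjoint non-negative}. I would additionally remark that the same argument delivers the slightly stronger trace bound $\sum_i (QQ^*)_{ii}\le(N-r)\,\epsilon^2$, which is the shape of the estimate actually used to control the error $u-\widetilde u$ in \eqref{eq:reduced rep} once the Cholesky decomposition is terminated at threshold $\epsilon$.
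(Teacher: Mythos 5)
Your argument is correct and follows the paper's own route: identify $QQ^{*}$ with the Schur complement left after $r$ pivoted elimination steps, bound its diagonal by the last pivot via monotonicity, and transfer that bound to all entries through Lemma~\ref{lem:Let--B be a selfadjoint non-negative}; your version simply spells out the two facts (diagonal entries only decrease under elimination, and the pivoting rule selects the current maximum) that the paper compresses into the phrase ``due to the decay of the pivots.'' The one place you diverge is the $\epsilon$-accounting: reading the hypothesis literally (smallest diagonal entry of $L_{r}$ equals $\epsilon$) you correctly obtain the bound $\epsilon^{2}$ on the entries of $QQ^{*}$ and then need $\epsilon\le1$, which you import from the unit normalization of the atoms; the paper instead implicitly takes $\epsilon$ to be the smallest \emph{pivot} (the square of the diagonal entry of $L_{r}$), the convention used in the termination test of Algorithm~\ref{alg:Reduction-algorithm-using-Gram} and in the bound $\left\Vert QQ^{*}\right\Vert _{F}\le\left(N-r\right)\epsilon$ of Theorem~\ref{thm: main estimate}, under which reading no extra hypothesis is needed. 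Your closing observation---that the hypothesis must be understood as encoding the largest-diagonal pivoting rule rather than mere monotonicity of the final factor's diagonal---is exactly right, and your write-up is, if anything, more complete than the paper's two-line proof.
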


\begin{proof}
After applying $r$ steps of Cholesky decomposition, the matrix $W$
does not change in the consecutive steps. The remaining matrix $QQ^{*}$
is self-adjoint positive semi-definite and, due to the decay of the
pivots, all of its diagonal entries are less than $\epsilon$. Using
Lemma~\ref{lem:Let--B be a selfadjoint non-negative}, we conclude
that all entries of $QQ^{*}$ are less than $\epsilon$.
\end{proof}
Let us organize the collection of atoms in (\ref{eq:multivariate mixture})
as
\[
A=\left[g_{1}\left(\mathbf{x}\right),g_{2}\left(\mathbf{x}\right),\dots g_{N}\left(\mathbf{x}\right)\right].
\]
We can view $A$ as a matrix with a gigantic number of rows resulting
from a discretization of the argument $\mathbf{x}\in\mathbb{R}^{d}$.
If we replace operations that require row-wise summation by the inner
product between the atoms, then we can (and do) use matrix notation
in the sequel. Without loss of generality, to simplify notation, we
assume that the first $r$ atoms in $A$ form the skeleton, that is,
$A=\left(A_{s}\mid A_{ns}\right)$, where $A_{s}$ denotes the $r$
skeleton atoms and $A_{ns}$ the $N-r$ non-skeleton atoms. 

Given the vector of coefficients $c=\left[c_{1},c_{2},\dots c_{N}\right]^{T}$in
(\ref{eq:multivariate mixture}), we want to find new coefficients
$\tilde{c}=\left[\tilde{c}_{1},\tilde{c}_{2},\dots,\tilde{c}_{r}\right]^{T}$
to approximate $u\left(\mathbf{x}\right)$ by 
\begin{equation}
\widetilde{u}\left(\mathbf{x}\right)=\sum_{l=1}^{r}\tilde{c}_{l}g_{l}\left(\mathbf{x}\right),\ \ \ r\ll N,\label{eq:function via skeletons terms}
\end{equation}
and estimate the error $\left\Vert u-\widetilde{u}\right\Vert _{2}$
of the approximation. Note that we identify $u=Ac$ and $\widetilde{u}=A_{s}\tilde{c}$.

We first seek an approximation of all atoms via the skeleton atoms,
\[
g_{k}\left(\mathbf{x}\right)\approx\sum_{i=1}^{r}p_{ik}g_{i}\left(\mathbf{x}\right),\,\,\,k=1,\dots,N.
\]
Selecting the coefficients $p_{ik}$ as the solutions of the least
squares problem, $p_{ik}$ satisfy the normal equations, 
\begin{equation}
\sum_{i=1}^{r}p_{ik}\langle g_{i},g_{i^{\prime}}\rangle=\langle g_{k},g_{i^{\prime}}\rangle,\,\,\,k=1,\dots,N,\,\,i^{\prime}=1,2,\dots,r.\label{eq: for p_ik}
\end{equation}
Introducing the matrix $P=\left\{ p_{ik}\right\} _{{i=1,\dots r\atop k=1,\dots N}}$,
we write (\ref{eq: for p_ik}) as
\begin{equation}
A_{s}^{*}A_{s}P=A_{s}^{*}A\label{eq: normal equations for P}
\end{equation}
and observe that $P=\left(I_{r}\mid S\right)$, where $I_{r}$ is
the $r\times r$ identity matrix and $S$, an $\left(N-r\right)\times r$
matrix, which satisfies 
\begin{equation}
A_{s}^{*}A_{s}S=A_{s}^{*}A_{ns}.\label{eq: normal equations for S}
\end{equation}
Setting 
\[
\tilde{c}_{i}=\sum_{k=1}^{N}p_{ik}c_{k},
\]
or 
\[
\tilde{c}=Pc,
\]
we obtain from (\ref{eq: normal equations for P}) that the coefficients
$\tilde{c}$ solve the system of normal equations 
\begin{equation}
A_{s}^{*}A_{s}\tilde{c}=A_{s}^{*}Ac.\label{eq:normal equation for coeffs}
\end{equation}

\begin{thm}
\label{thm: main estimate}Let the Gram matrix $G$, 
\[
G=\left(\begin{array}{cc}
A_{s}^{*}A_{s} & A_{s}^{*}A_{ns}\\
A_{ns}^{*}A_{s} & A_{ns}^{*}A_{ns}
\end{array}\right),
\]
be such that its partial Cholesky decomposition has monotonically
decaying pivots and is of the form
\[
G=\left(\begin{array}{cc}
L_{r} & 0\\
W & 0
\end{array}\right)\left(\begin{array}{cc}
L_{r}^{*} & W^{*}\\
0 & 0
\end{array}\right)+\left(\begin{array}{cc}
0 & 0\\
0 & QQ^{*}
\end{array}\right)
\]
where $L_{r}$ is an $r\times r$ lower triangular matrix with the
smallest diagonal entry $\epsilon>0$. If the coefficients of the
skeleton terms are computed via (\ref{eq:normal equation for coeffs}),
then the difference between $u$ in (\ref{eq:multivariate mixture})
and its approximation (\ref{eq:function via skeletons terms}) can
be estimated as 

\begin{equation}
\left\Vert u-\widetilde{u}\right\Vert _{2}=\left\Vert A_{s}\tilde{c}-Ac\right\Vert _{2}\le\left\Vert c\right\Vert _{2}\sqrt{N-r}\,\epsilon^{1/2}.\label{eq: key estimate}
\end{equation}
\end{thm}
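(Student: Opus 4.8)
The plan is to decompose the error $A_s\tilde c - Ac$ into a sum over the non-skeleton atoms and control each contribution by the size of the residual Gram block $QQ^*$, whose entries are all below $\epsilon$ by Corollary~\ref{cor: eps estimate}. Write $P=(I_r\mid S)$ as above, so that $\widetilde u = A_s\tilde c = A_s P c$. The key object is the matrix of residuals $R = A - A_s P = (0 \mid A_{ns} - A_s S)$, i.e.\ only the non-skeleton columns are nonzero; call them $r_k = g_k - \sum_{i=1}^r s_{ik}g_i$ for $k=r+1,\dots,N$. Then $u - \widetilde u = Ac - A_s P c = Rc = \sum_{k=r+1}^N c_k\, r_k$, and by the triangle inequality and Cauchy--Schwarz,
\begin{equation}
\left\Vert u - \widetilde u\right\Vert_2 \le \sum_{k=r+1}^N |c_k|\,\|r_k\|_2 \le \|c\|_2\left(\sum_{k=r+1}^N \|r_k\|_2^2\right)^{1/2}. \label{eq:cs-split}
\end{equation}

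First I would identify $\sum_{k=r+1}^N \|r_k\|_2^2$ with $\operatorname{tr}(R^*R)$, and then show $R^*R$ is exactly the residual block $QQ^*$ from the partial Cholesky decomposition. This is the heart of the argument: because $S$ solves the normal equations $A_s^*A_s S = A_s^* A_{ns}$, the residual columns are orthogonal to the skeleton span, so $R^*R = (A_{ns}-A_sS)^*(A_{ns}-A_sS) = A_{ns}^*A_{ns} - S^*A_s^*A_{ns} = A_{ns}^*A_{ns} - S^*A_s^*A_sS$, which is precisely the Schur complement of the block $A_s^*A_s$ in $G$ — and that Schur complement is what the partial Cholesky decomposition leaves behind, namely $QQ^*$ (here one uses that $L_r$ is invertible since its smallest diagonal entry is $\epsilon>0$, so the Schur complement is well-defined and equals $QQ^*$). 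Hence $\sum_{k=r+1}^N\|r_k\|_2^2 = \operatorname{tr}(QQ^*) = \sum_{k=r+1}^N (QQ^*)_{kk}$.

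Finally I would invoke Corollary~\ref{cor: eps estimate}: every entry of $QQ^*$ is less than $\epsilon$, so in particular each of its $N-r$ diagonal entries is less than $\epsilon$, giving $\operatorname{tr}(QQ^*) \le (N-r)\epsilon$. Substituting into \eqref{eq:cs-split} yields
\begin{equation}
\left\Vert u - \widetilde u\right\Vert_2 \le \|c\|_2\sqrt{(N-r)\epsilon} = \|c\|_2\sqrt{N-r}\,\epsilon^{1/2},\nonumber
\end{equation}
which is \eqref{eq: key estimate}.

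The step I expect to require the most care is the identification $R^*R = QQ^*$: one must argue cleanly that the least-squares residual block coincides with the Schur complement produced by $r$ steps of (pivoted) Cholesky elimination, and that the hypothesis $\epsilon>0$ guarantees $A_s^*A_s = L_rL_r^*$ is nonsingular so this Schur complement is the honest one. Everything else — the splitting of the error over non-skeleton atoms, Cauchy--Schwarz, and the trace bound from the corollary — is routine. One should also note in passing that the bound is stated in the idealized arithmetic setting where the normal equations are solved exactly; the conditioning caveat mentioned in the introduction is about finite-precision effects and does not affect this estimate.
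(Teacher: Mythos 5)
Your proof is correct, and its core coincides with the paper's: both arguments reduce the error to the residual Gram block $A_{ns}^{*}A_{ns}-A_{ns}^{*}A_{s}S$ and identify it, via the normal equations together with $A_{s}^{*}A_{s}=L_{r}L_{r}^{*}$, $A_{ns}^{*}A_{s}=WL_{r}^{*}$ and $A_{ns}^{*}A_{ns}=WW^{*}+QQ^{*}$, with the Schur complement $QQ^{*}$ left by $r$ steps of pivoted Cholesky. Where you genuinely diverge is in the final estimate. The paper writes $\Vert A_{s}\tilde{c}-Ac\Vert_{2}^{2}=\langle c,(P^{*}A_{s}^{*}-A^{*})(A_{s}P-A)c\rangle\le\Vert c\Vert_{2}^{2}\Vert QQ^{*}\Vert_{2}$ and then bounds the spectral norm by the Frobenius norm, $\Vert QQ^{*}\Vert_{F}\le(N-r)\epsilon$, which needs the full strength of Corollary~\ref{cor: eps estimate} (all entries of $QQ^{*}$ below $\epsilon$, hence the off-diagonal inequality of Lemma~\ref{lem:Let--B be a selfadjoint non-negative}). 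You instead split the error column-wise, apply the triangle inequality and Cauchy--Schwarz, and bound $\operatorname{tr}(QQ^{*})\le(N-r)\epsilon$; this uses only the diagonal of $QQ^{*}$, which is controlled directly by the monotone decay of the pivots, so your route is slightly more elementary and bypasses the off-diagonal estimate entirely. The trade-off is sharpness of the intermediate quantity: for a positive semi-definite matrix $\Vert QQ^{*}\Vert_{2}\le\Vert QQ^{*}\Vert_{F}\le\operatorname{tr}(QQ^{*})$, and it is precisely the gap between $\Vert QQ^{*}\Vert_{2}$ and $\Vert QQ^{*}\Vert_{F}$ that underlies the paper's remark that the $\sqrt{N-r}$ factor is pessimistic in practice; a trace-based bound cannot be tightened in the same way. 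Both routes land on the identical inequality (\ref{eq: key estimate}), and your closing observation about exact versus finite-precision arithmetic is consistent with the paper's discussion.
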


\begin{proof}
We have
\begin{equation}
\left\Vert A_{s}\tilde{c}-Ac\right\Vert _{2}^{2}=\left\Vert A_{s}Pc-Ac\right\Vert _{2}^{2}=\langle c,\left(P^{*}A_{s}^{*}-A^{*}\right)\left(A_{s}P-A\right)c\rangle,\label{eq:intermediate 0}
\end{equation}
where the coefficient matrix $P$ solves the normal equations (\ref{eq: normal equations for P})
and, therefore, 
\begin{equation}
A_{s}^{*}\left(A_{s}P-A\right)=0,\label{eq:null space}
\end{equation}
as well as 
\begin{equation}
A_{s}^{*}\left(A_{s}S-A_{ns}\right)=0.\label{eq:in the null space}
\end{equation}
Using (\ref{eq:null space}), we obtain

\begin{equation}
\left(P^{*}A_{s}^{*}-A^{*}\right)\left(A_{s}P-A\right)=A^{*}A-A^{*}A_{s}P\label{eq: intermediate 1}
\end{equation}
and proceed to compute $A^{*}A_{s}P$. We have $A^{*}=\left(\begin{array}{c}
A_{s}^{*}\\
A_{ns}^{*}
\end{array}\right)$ and $A_{s}P=\left(A_{s}\mid A_{s}S\right)$, so that
\[
A^{*}A_{s}P=\left(\begin{array}{cc}
A_{s}^{*}A_{s} & A_{s}^{*}A_{s}S\\
A_{ns}^{*}A_{s} & A_{ns}^{*}A_{s}S
\end{array}\right).
\]
Thus, we have 
\[
A^{*}A-A^{*}A_{s}P=\left(\begin{array}{cc}
0 & A_{s}^{*}\left(A_{ns}-A_{s}S\right)\\
0 & A_{ns}^{*}\left(A_{ns}-A_{s}S\right)
\end{array}\right).
\]
and, using (\ref{eq:in the null space}), arrive at 
\begin{equation}
A^{*}A-A^{*}A_{s}P=\left(\begin{array}{cc}
0 & 0\\
0 & A_{ns}^{*}A_{ns}-A_{ns}^{*}A_{s}S
\end{array}\right).\label{eq:intermediate 2}
\end{equation}
Equating the two expressions of the Gram matrix in the statement of
the Theorem, 
\[
G=\left(\begin{array}{cc}
A_{s}^{*}A_{s} & A_{s}^{*}A_{ns}\\
A_{ns}^{*}A_{s} & A_{ns}^{*}A_{ns}
\end{array}\right)=\left(\begin{array}{cc}
L_{r} & 0\\
W & 0
\end{array}\right)\left(\begin{array}{cc}
L_{r}^{*} & W^{*}\\
0 & 0
\end{array}\right)+\left(\begin{array}{cc}
0 & 0\\
0 & QQ^{*}
\end{array}\right),
\]
we obtain that
\begin{equation}
A_{s}^{*}A_{s}=L_{r}L_{r}^{*},\label{eq:AsL}
\end{equation}
\begin{equation}
A_{ns}^{*}A_{s}=WL_{r}^{*},\,\,\,\,\,A_{s}A_{ns}^{*}=L_{r}W^{*},\label{eq:AnsA}
\end{equation}
and 
\begin{equation}
A_{ns}^{*}A_{ns}=WW^{*}+QQ^{*}.\label{eq:AnsStarAns}
\end{equation}
We observe that from (\ref{eq: normal equations for S}) using (\ref{eq:AsL})
and (\ref{eq:AnsA}), we arrive at

\begin{equation}
L_{r}L_{r}^{*}S=L_{r}W^{*}.\label{eq:L_rW}
\end{equation}
Next we show that the non-zero block of the matrix in the right hand
side of (\ref{eq:intermediate 2}) coincides with $QQ^{*}$. Using
(\ref{eq:AnsA}) and (\ref{eq:L_rW}), we have
\begin{eqnarray*}
A_{ns}^{*}A_{s}S & = & WL_{r}^{*}S\\
 & = & W\left(L_{r}^{-1}L_{r}\right)L_{r}^{*}S\\
 & = & WL_{r}^{-1}\left(L_{r}L_{r}^{*}S\right)\\
 & = & WL_{r}^{-1}\left(L_{r}W^{*}\right)=WW^{*},
\end{eqnarray*}
where we used that $L_{r}$ is non-singular. Hence, combining the
last identity with (\ref{eq:AnsStarAns}), we obtain
\begin{equation}
A_{ns}^{*}A_{ns}-A_{ns}^{*}A_{s}S=WW^{*}+QQ^{*}-WW^{*}=QQ^{*}.\label{eq:intermediate_3}
\end{equation}
By (\ref{eq:intermediate 0}), (\ref{eq: intermediate 1}) , (\ref{eq:intermediate 2}),
and (\ref{eq:intermediate_3}) we obtain
\begin{eqnarray*}
\left\Vert A_{s}\tilde{c}-Ac\right\Vert _{2}^{2} & \le & \left\Vert c\right\Vert _{2}^{2}\left\Vert \left(P^{*}A_{s}^{*}-A^{*}\right)\left(A_{s}P-A\right)\right\Vert _{2}\\
 & = & \left\Vert c\right\Vert _{2}^{2}\left\Vert A_{ns}^{*}A_{ns}-A_{ns}^{*}A_{s}S\right\Vert _{2}\\
 & = & \left\Vert c\right\Vert _{2}^{2}\left\Vert QQ^{*}\right\Vert _{2}.
\end{eqnarray*}
Using Corollary~\ref{cor: eps estimate}, we estimate $\left\Vert QQ^{*}\right\Vert _{2}$
by its Frobenius norm, 
\[
\left\Vert QQ^{*}\right\Vert _{2}\le\left\Vert QQ^{*}\right\Vert _{F}\le\left(N-r\right)\epsilon
\]
which yields the desired estimate (\ref{eq: key estimate}).
\end{proof}
\begin{rem}
The estimate (\ref{eq: key estimate}) is tighter than the one obtained
in \cite[Theorem 3.1]{BI-BE-BE:2015} since the dependence on the
number of terms is $\mathcal{O}\left(N^{1/2}\right)$ instead of $\mathcal{O}\left(N^{3/4}\right)$.
Yet, the estimate is still pessimistic since $\left\Vert QQ^{*}\right\Vert _{2}$
is usually significantly smaller than the Frobenius norm $\left\Vert QQ^{*}\right\Vert _{F}$.
In practice, for $N$ in the range $10^{5}-10^{6}$, we did not observe
the reduction of accuracy suggested by the factor $\mathcal{O}\left(N^{1/2}\right)$.
\end{rem}

In Table~\ref{alg:Reduction-algorithm-using-Gram} we present pseudo-code
for the reduction Algorithm~\ref{alg:Reduction-algorithm-using-Gram}.
Note that this algorithm is dimension independent except for the cost
of computing the inner product which we always assume to be reasonable
by a judicious choice of the functions in the mixture. As a consequence
of Theorem~\ref{thm: main estimate} and Lemma~\ref{lem:Let--B be a selfadjoint non-negative},
it is sufficient to generate only $N\times r$ entries of the Cholesky
decomposition of the Gram matrix $G$ which requires $p\left(d\right)r\thinspace N$
operations, where $p\left(d\right)$ is the cost of computing the
inner product between two terms of a mixture. Therefore, the overall
computational cost of Algorithm~\ref{alg:Reduction-algorithm-using-Gram}
is $\mathcal{O}\left(r^{2}N+p\left(d\right)r\thinspace N\right)$
operations (see Table~\ref{tab:Timing-of-Algorithm-gram complexity}
for some examples). For a general Gaussian mixture $p\left(d\right)$
is proportional to $d^{3}$ since we need to diagonalize a $d\times d$
matrix in order to evaluate the explicit expression for the inner
product in Appendix~\ref{subsec:Inner-product-of} whereas, for separated
representations, no diagonalization is needed so that $p\left(d\right)\sim d$.
\begin{algorithm}
\noindent \begin{raggedright}
\caption{\label{alg:Reduction-algorithm-using-Gram}Reduction algorithm using
a Gram matrix}
\textbf{Inputs}: Atoms $g_{l}$ and coefficients $c_{l}$ in the representation
of $u\left(\mathbf{x}\right)=\sum_{l=1}^{N}c_{l}g_{l}\left(\mathbf{x}\right)$
and error tolerance, $\epsilon$, $10^{-14}\le\epsilon<1$. We assume
that a subroutine to compute the inner product $\langle g_{l},g_{m}\rangle$
is available.
\par\end{raggedright}
\noindent \begin{raggedright}
\textbf{Outputs}: A pivot vector $I=\left[\widehat{I},\widetilde{I}\right]$,
where $\widehat{I}=\left[i_{1},\dots,i_{r}\right]$ contains indices
of $r$ skeleton terms and $\widetilde{I}=\left[i_{r+1},\dots i_{N}\right]$
indices of terms being removed from the final representation and the
coefficients $\widetilde{c}_{i_{m}},m=1,\cdots r$, such that $\left\Vert u-\sum_{m=1}^{r}\widetilde{c}_{i_{m}}g_{i_{m}}\right\Vert _{2}\leq\left\Vert c\right\Vert _{2}\sqrt{N-r}\,\epsilon^{1/2}$
due to Theorem~\ref{thm: main estimate}.
\par\end{raggedright}
\noindent \begin{raggedright}
~
\par\end{raggedright}
\noindent \begin{raggedright}
\textbf{Stage 1: Pivoted Cholesky decomposition of the Gram matrix.}
\par\end{raggedright}
\noindent \begin{raggedright}
~
\par\end{raggedright}
\noindent \begin{raggedright}
\textbf{Initialization.}
\par\end{raggedright}
\noindent \begin{raggedright}
~
\par\end{raggedright}
\noindent \begin{raggedright}
We maintain the diagonal $\left[d_{1},d_{2},\dots,d_{N}\right]$ of
the Cholesky factor $L$ separately and initialize it as $\begin{bmatrix}1, & 1, & \cdots, & 1\end{bmatrix}$
(since all atoms have unit $L^{2}$-norm).
\par\end{raggedright}
\noindent \begin{raggedright}
Set $r=0$ and initialize a pivot vector as $I=\left[1,2,\dots,N\right]$.
\par\end{raggedright}
\noindent \begin{raggedright}
~
\par\end{raggedright}
\noindent \begin{raggedright}
\textbf{for} $l=1,N$
\par\end{raggedright}
\begin{enumerate}
\item \begin{raggedright}
Find the largest element of the diagonal and its index $i_{j}=\left\{ i_{j}:\,\,\,d_{i_{j}}\ge d_{i_{k}},\,\,\,\,k=l,\dots,N\right\} $\\
\textbf{if $d_{i_{j}}<\epsilon$ goto} \textbf{Stage 2}
\par\end{raggedright}
\item \begin{raggedright}
Swap indices $i_{j}$ and $i_{l}$ in the pivot vector $I$.
\par\end{raggedright}
\item \begin{raggedright}
Set the diagonal element of the matrix $L_{i_{l},l}=\left(d_{i_{l}}\right)^{1/2}$
\par\end{raggedright}
\end{enumerate}
\noindent \begin{raggedright}
\textbf{~~~~}~~\textbf{for }$j=l+1,N$
\par\end{raggedright}
\noindent \begin{raggedright}
~~~~~~~~~~~~$L_{i_{j},l}=\left(\left\langle g_{i_{j}},g_{i_{l}}\right\rangle -\sum_{k=1}^{l-1}L_{i_{l},k}L_{i_{j},k}\right)/L_{i_{l},l}$
\par\end{raggedright}
\noindent \begin{raggedright}
~~~~~~~~~~~~$d_{i_{j}}=d_{i_{j}}-L_{i_{j},l}^{2}$
\par\end{raggedright}
\noindent \begin{raggedright}
\textbf{~~}~~~~\textbf{end}
\par\end{raggedright}
\noindent \begin{raggedright}
~~~~~~update: $r=r+1$
\par\end{raggedright}
\noindent \begin{raggedright}
\textbf{end}
\par\end{raggedright}
\noindent \begin{raggedright}
~
\par\end{raggedright}
\noindent \begin{raggedright}
\textbf{Stage 2: Find new coefficients $\widetilde{c}_{i_{m}},m=1,\cdots,r$}
\par\end{raggedright}
\begin{enumerate}
\item \begin{raggedright}
Form a vector $b$ such that its $j$-th element is the inner product
of $\sum_{i_{m}\in\widetilde{I}}c_{i_{m}}g_{i_{m}}=\sum_{m=r+1}^{N}c_{i_{m}}g_{i_{m}}$
and $g_{i_{j}}.$\\
\textbf{for} $j=1,r$\\
~~~~~~$b\left(j\right)=\sum_{m=r+1}^{N}c_{i_{m}}\left(\sum_{k=1}^{j}L_{i_{j},k}L_{i_{m},k}\right)$\\
\textbf{end}
\par\end{raggedright}
\item \begin{raggedright}
Solve the linear system $\widehat{G}\widetilde{c}=b$, where $\widehat{G}_{jl}=\left\langle g_{i_{j}},g_{i_{l}}\right\rangle =\sum_{k=1}^{N}L_{i_{j}k}L_{i_{l}k}$
and $\widetilde{c}=\left[\widetilde{c}_{i_{1}},\widetilde{c}_{i_{1}},\cdots,\widetilde{c}_{i_{r}}\right]^{T}$
using forward and backward substitution.
\par\end{raggedright}
\item \raggedright{}Add the original coefficients of the skeleton terms
$c_{i_{m}}$ to $\widetilde{c}_{i_{m}}$ to get the new coefficients\\
\textbf{for} $m=1,r$~\\
~~~~~~$\widetilde{c}_{i_{m}}=\widetilde{c}_{i_{m}}+c_{i_{m}}$\textbf{}\\
\textbf{end}
\end{enumerate}
\end{algorithm}

\subsection{Reduction via a rank-revealing modified Gram-Schmidt algorithm}

For ordinary matrices, Rank Revealing QR (RRQR) algorithms (see e.g.\ \cite{CHA-IPS:1994,GU-EIS:1996}
and references therein) select a set of linear independent columns
that one can use to represent the other columns (within a certain
accuracy). All matrix RRQR algorithms routinely use vector addition
whereas, in our problem, the sum of two or more terms of multivariate
mixture does not simplify and, therefore, a sum must to be maintained
as a linear combination. Moreover, as far as we know, there is no
analogue of the Householder reflection or Givens rotation (as tools
for orthogonalization) unless we discretize the argument $\mathbf{x}\in\mathbb{R}^{d}$
of the terms $\left\{ g_{i}\left(\mathbf{x}\right)\right\} _{i=1}^{N}$
(note that for reduction in low dimensions we use a discretization
of the Fourier transform of the terms of the mixture as described
in Section~\ref{subsec:Alternative-reduction-algorithms}). As long
as we rely on the rapid evaluation of inner products, the reduction
algorithm mimics the modified Gram-Schmidt (MGS) orthogonalization
with the caveat that the sum of terms is maintained as a linear combination.
Unfortunately, this algorithm (even for ordinary matrices) loses one
half of significant digits (see \cite{BJORCK:1967} and discussion
in \cite[Section 5.2.9]{GOL-LOA:1996}) as does Algorithm~\ref{alg:Reduction-algorithm-using-Gram}
(it remains an open question if there exists a modification of MGS
for our problem that maintains full accuracy). As we show below, the
reduction algorithm via orthogonalization for multivariate mixtures
has formal complexity $\mathcal{O}\left(r^{3}+r^{2}N+p\left(d\right)r\thinspace N\right)$,
which is somewhat worse than the complexity of the reduction via Cholesky
decomposition. However, since in a typical reduction $N\gg r$, such
algorithm can be considered to be of the same complexity as Algorithm~\ref{alg:Reduction-algorithm-using-Gram}. 

To introduce notation, we consider a set of multivariate atoms $\left\{ g_{i}\left(\mathbf{x}\right)\right\} _{i=1}^{N}$
and write
\begin{align}
\widetilde{\psi}_{1}\left(\mathbf{x}\right) & =g_{1}\left(\mathbf{x}\right), & \psi_{1}\left(\mathbf{x}\right) & =\frac{\widetilde{\psi}_{1}\left(\mathbf{x}\right)}{\left\Vert \widetilde{\psi}_{1}\right\Vert }\nonumber \\
\widetilde{\psi}_{2}\left(\mathbf{x}\right) & =g_{2}\left(\mathbf{x}\right)-t_{21}\widetilde{\psi}_{1}\left(\mathbf{x}\right), & \psi_{2}\left(\mathbf{x}\right) & =\frac{\widetilde{\psi}_{2}\left(\mathbf{x}\right)}{\left\Vert \widetilde{\psi}_{2}\right\Vert }\nonumber \\
\widetilde{\psi}_{3}\left(\mathbf{x}\right) & =g_{3}\left(\mathbf{x}\right)-t_{31}\widetilde{\psi}_{1}\left(\mathbf{x}\right)-t_{32}\widetilde{\psi}_{2}\left(\mathbf{x}\right), & \psi_{3}\left(\mathbf{x}\right) & =\frac{\widetilde{\psi}_{3}\left(\mathbf{x}\right)}{\left\Vert \widetilde{\psi}_{3}\right\Vert }\label{eq:gram schmidt process}\\
 & \vdots &  & \text{\ensuremath{\vdots}}\nonumber \\
\widetilde{\psi}_{N}\left(\mathbf{x}\right) & =g_{N}\left(\mathbf{x}\right)-\left(\sum_{j=1}^{N}t_{Nj}\widetilde{\psi}_{j}\left(\mathbf{x}\right)\right), & \psi_{N}\left(\mathbf{x}\right) & =\frac{\widetilde{\psi}_{N}\left(\mathbf{x}\right)}{\left\Vert \widetilde{\psi}_{N}\right\Vert }\nonumber 
\end{align}
where the functions $\widetilde{\psi}_{i}$ are orthogonal, $\psi_{i}$
are orthonormal, and
\[
t_{ij}=\frac{\left\langle \widetilde{\psi}_{j},g_{i}\right\rangle }{\left\langle \widetilde{\psi}_{j},\widetilde{\psi}_{j}\right\rangle },\,\,\,1\leq j<i,\thinspace i=1,\dots,N.
\]
We can also rewrite (\ref{eq:gram schmidt process}) as
\begin{align}
g_{1}\left(\mathbf{x}\right) & =r_{11}\psi_{1}\left(\mathbf{x}\right),\nonumber \\
g_{2}\left(\mathbf{x}\right) & =r_{21}\psi_{1}\left(\mathbf{x}\right)+r_{22}\psi_{2}\left(\mathbf{x}\right),\nonumber \\
g_{3}\left(\mathbf{x}\right) & =r_{31}\psi_{1}\left(\mathbf{x}\right)+r_{32}\psi_{2}\left(\mathbf{x}\right)+r_{33}\psi_{3}\left(\mathbf{x}\right),\label{eq:g via u-tilde}\\
 & \vdots\nonumber \\
g_{N}\left(\mathbf{x}\right) & =\sum_{j=1}^{N}r_{Nj}\psi_{j}\left(\mathbf{x}\right),\nonumber 
\end{align}
where $r_{ij}=t_{ij}\left\Vert \widetilde{\psi}_{j}\right\Vert $
and $r_{ii}=\left\Vert \widetilde{\psi}_{i}\right\Vert $, $i=1,\dots,N$.
We can also consider a set of coefficients $s_{ij}$, $1\leq j\leq i\leq N$
such that
\begin{eqnarray}
\psi_{1}\left(\mathbf{x}\right) & = & s_{11}g_{1}\left(\mathbf{x}\right),\nonumber \\
\psi_{2}\left(\mathbf{x}\right) & = & s_{21}g_{1}\left(\mathbf{x}\right)+s_{22}g_{2}\left(\mathbf{x}\right),\nonumber \\
\psi_{3}\left(\mathbf{x}\right) & = & s_{31}g_{1}\left(\mathbf{x}\right)+s_{32}g_{2}\left(\mathbf{x}\right)+s_{33}g_{3}\left(\mathbf{x}\right),\label{eq:u-tilde via g}\\
 & \vdots\nonumber \\
\psi_{N}\left(\mathbf{x}\right) & = & \sum_{j=1}^{N}s_{Nj}g_{j}\left(\mathbf{x}\right).\nonumber 
\end{eqnarray}

To compute the coefficients $r_{ij}$ in (\ref{eq:g via u-tilde})
and $s_{ij}$ in (\ref{eq:u-tilde via g}), we need to evaluate the
inner product between the multivariate atoms $g_{i}$ and $g_{j}$
which we denote as $G_{ij}=\left\langle g_{i},g_{j}\right\rangle $.

In order to estimate the computational cost, let us assume that the
first $k-1$ steps have been accomplished so that the coefficients
$r_{ij}$, $1\leq i\leq N$, $1\leq j\leq k-1$ and $j\le i$ in (\ref{eq:g via u-tilde}),
the coefficients $s_{ij}$, $1\leq j\leq i\leq k$ in (\ref{eq:u-tilde via g}),
the norms of $\widetilde{\psi}_{i}=g_{i}-\sum_{j=1}^{k-1}r_{ij}\psi_{j}$,
$i=1,\dots,N$ as well as the partial Gram matrix $G_{ij}$, $1\leq i\leq N$,
$1\leq j\leq k-1$ and $j\le i$ are already available. We then select
the term, $\widetilde{\psi}_{i}=g_{i}-\sum_{j=1}^{k-1}r_{ij}\psi_{j}$,
$i=k,\dots,N$ with the largest norm which we then swap to become
the term with index $k$. For simplicity of indexing we assume that
such term was already in the position $k$. Thus, our pivoting strategy
uses a greedy algorithm by selecting terms with the largest norm (we
note that alternative pivoting strategies may be possible but we do
not explore them here).

Note that the norms $\left\Vert \widetilde{\psi}_{i}\right\Vert =\left\Vert g_{i}-\sum_{j=1}^{i-1}r_{ij}\psi_{j}\right\Vert =r_{ii}$,
$i=1,\dots,k$ do not change in the steps that follow. At the $k$-th
step, we first need to compute the coefficients in $s_{kj}$, $j=1,\dots,k$
such that
\[
\psi_{k}\left(\mathbf{x}\right)=\sum_{j=1}^{k}s_{kj}g_{j}\left(\mathbf{x}\right)
\]
as in (\ref{eq:u-tilde via g}). Since the norm $r_{kk}=\left\Vert \widetilde{\psi}_{k}\right\Vert $
is available we use (\ref{eq:gram schmidt process}) to evaluate
\begin{eqnarray*}
r_{kk}\psi_{k}\left(\mathbf{x}\right) & = & g_{k}\left(\mathbf{x}\right)-\sum_{i=1}^{k-1}r_{ki}\psi_{i}\left(\mathbf{x}\right)=g_{k}\left(\mathbf{x}\right)-\sum_{i=1}^{k-1}r_{ki}\left(\sum_{j=1}^{i}s_{ij}g_{j}\left(\mathbf{x}\right)\right)\\
 & = & g_{k}\left(\mathbf{x}\right)-\sum_{j=1}^{k-1}\left(\sum_{i=j}^{k-1}r_{ki}s_{ij}\right)g_{j}\left(\mathbf{x}\right)=-\sum_{j=1}^{k-1}\left(\sum_{i=j}^{k-1}r_{ki}s_{ij}\right)g_{j}\left(\mathbf{x}\right)+g_{k}\left(\mathbf{x}\right)
\end{eqnarray*}
and, therefore, obtain
\begin{equation}
s_{kj}=-\frac{1}{r_{kk}}\sum_{i=j}^{k-1}r_{ki}s_{ij},\ \ \ 1\leq j\leq k-1,\ \ \ s_{kk}=\frac{1}{r_{k,k}}.\label{eq:compute Q matrix}
\end{equation}
Computing the coefficients $s_{kj}$ take $\mathcal{O}\left(k^{2}\right)$
operations at this step. Next, we compute the coefficients in (\ref{eq:g via u-tilde})
and update the term $\widetilde{\psi}_{i}=g_{i}-\sum_{j=1}^{k-1}r_{ij}\psi_{j}$
by subtracting $r_{ik}\psi_{k}$ and evaluate the resulting norms
for $i=k+1,\dots,N$. Specifically, for $i=k+1,\dots,N$, we compute
$G_{ik}=\left\langle g_{i},g_{k}\right\rangle ,$and
\begin{equation}
r_{ik}=\left\langle g_{i},\psi_{k}\right\rangle =\left\langle g_{i},\sum_{j=1}^{k}s_{kj}g_{j}\right\rangle =\sum_{j=1}^{k}s_{kj}\left\langle g_{i},g_{j}\right\rangle =\sum_{j=1}^{k}s_{kj}G_{ij}.\label{eq:compute coef r}
\end{equation}
We update the norm of $\widetilde{\psi}_{i}$ as follows,
\begin{eqnarray}
\left\Vert \widetilde{\psi}_{i}\right\Vert ^{2} & = & \left\Vert g_{i}-\sum_{j=1}^{k-1}r_{ij}\psi_{j}-r_{ik}\psi_{k}\right\Vert ^{2}=\left\Vert g_{i}-\sum_{j=1}^{k-1}r_{ij}\psi_{j}\right\Vert ^{2}+r_{ik}^{2}-2\left\langle g_{i}-\sum_{j=1}^{k-1}r_{ij}\psi_{j},r_{ik}\psi_{k}\right\rangle \label{eq:update norm}\\
 & = & \left\Vert g_{i}-\sum_{j=1}^{k-1}r_{ij}\psi_{j}\right\Vert ^{2}+r_{ik}^{2}-2\left\langle g_{i},r_{ik}\psi_{k}\right\rangle =\left\Vert g_{i}-\sum_{j=1}^{k-1}r_{ij}\psi_{j}\right\Vert ^{2}+r_{ik}^{2}-2r_{ik}\left\langle g_{i},\psi_{k}\right\rangle \nonumber \\
 & = & \left\Vert g_{i}-\sum_{j=1}^{k-1}r_{ij}\psi_{j}\right\Vert ^{2}-r_{ik}^{2},\nonumber 
\end{eqnarray}
where we used (\ref{eq:compute coef r}). At this step, computing
$G_{ik}$, (\ref{eq:compute coef r}) and (\ref{eq:update norm})
are performed for each index $i$ so that it requires $\mathcal{O}\left(p\left(d\right)N+k\thinspace N\right)$
operations, where $p\left(d\right)$ is the cost of computing the
inner product between two terms of the mixture.

Once the skeleton terms $\left\{ g_{i}\left(\mathbf{x}\right)\right\} _{i=1}^{r}$
are identified in the process of orthogonalization, we compute the
coefficients $\left\{ \widetilde{c}_{i}\right\} _{i=1}^{r}$ of the
new representation of the multivariate mixture via these terms, i.e.\ approximate
$\sum_{i=1}^{N}c_{i}g_{i}\left(\mathbf{x}\right)$ as $\sum_{i=1}^{r}\widetilde{c}_{i}g_{i}\left(\mathbf{x}\right)$.
Using the fact that for $j>r$
\[
g_{j}\left(\mathbf{x}\right)\approx\sum_{k=1}^{r}r_{jk}\psi_{k}\left(\mathbf{x}\right)=\sum_{k=1}^{r}r_{jk}\left(\sum_{i=1}^{k}s_{ki}g_{i}\left(\mathbf{x}\right)\right)=\sum_{i=1}^{r}\left(\sum_{k=i}^{r}r_{jk}s_{ki}\right)g_{i}\left(\mathbf{x}\right),
\]
we compute
\begin{eqnarray*}
\sum_{i=1}^{N}c_{i}g_{i}\left(\mathbf{x}\right) & = & \sum_{i=1}^{r}c_{i}g_{i}\left(\mathbf{x}\right)+\sum_{j=r+1}^{N}c_{j}g_{j}\left(\mathbf{x}\right)\\
 & \approx & \sum_{i=1}^{r}c_{i}g_{i}\left(\mathbf{x}\right)+\sum_{j=r+1}^{N}c_{j}\left(\sum_{i=1}^{r}\left(\sum_{k=i}^{r}r_{jk}s_{ki}\right)g_{i}\left(\mathbf{x}\right)\right)\\
 & = & \sum_{i=1}^{r}c_{i}g_{i}\left(\mathbf{x}\right)+\sum_{i=1}^{r}\left(\sum_{j=r+1}^{N}c_{j}\left(\sum_{k=i}^{r}r_{jk}s_{ki}\right)\right)g_{i}\left(\mathbf{x}\right)\\
 & = & \sum_{i=1}^{r}\widetilde{c}_{i}g_{i}\left(\mathbf{x}\right)
\end{eqnarray*}
so that 
\begin{equation}
\widetilde{c}_{i}=c_{i}+\sum_{j=r+1}^{N}c_{j}\left(\sum_{k=i}^{r}r_{jk}s_{ki}\right).\label{eq:new coefficients}
\end{equation}
If the number of skeleton terms is $r$ then, combining complexity
estimates for all steps, the resulting algorithm has a complexity
$\mathcal{O}\left(r^{3}+r^{2}N+p\left(d\right)r\thinspace N\right)$.
Since this algorithm is designed to be used when $N\gg r$, we conclude
that the overall cost is $\mathcal{O}\left(r^{2}N+p\left(d\right)r\thinspace N\right)$.
Pseudo-code for this reduction algorithm is presented as Algorithm~\ref{alg:reduction MGS}. 

\begin{algorithm}
\noindent \begin{raggedright}
\caption{\label{alg:reduction MGS}Reduction algorithm using modified Gram-Schmidt
orthogonalization}
\textbf{Inputs}: Atoms $g_{l}$ and coefficients $c_{l}$ in the representation
of $u\left(\mathbf{x}\right)=\sum_{l=1}^{N}c_{l}g_{l}\left(\mathbf{x}\right)$
and error tolerance, $\epsilon$, $10^{-14}\le\epsilon<1$. We assume
that a subroutine to compute the inner product $\langle g_{l},g_{l^{'}}\rangle$
is available.
\par\end{raggedright}
\noindent \begin{raggedright}
\textbf{Outputs}: The pivot vector $I=\left[\widehat{I},\widetilde{I}\right]$,
where $\widehat{I}=\left[i_{1},\dots,i_{r}\right]$ contains indices
of $r$ skeleton terms and $\widetilde{I}=\left[i_{r+1},\dots i_{N}\right]$
indices of terms being removed from the final representation and the
coefficients $\widetilde{c}_{i_{m}},m=1,\cdots r$, such that $\left|u\left(\mathbf{x}\right)-\sum_{m=1}^{r}\widetilde{c}_{i_{m}}g_{i_{m}}\left(\mathbf{x}\right)\right|=\mathcal{O}\left(\epsilon^{1/2}\right).$
\par\end{raggedright}
\noindent \begin{raggedright}
~
\par\end{raggedright}
\noindent \begin{raggedright}
\textbf{Stage 1: Pivoted MGS applied to the atoms $g_{l},$ $l=1,\cdots,N.$}
\par\end{raggedright}
\noindent \begin{raggedright}
Set $\widetilde{\psi}_{l}\left(\mathbf{x}\right)=g_{l}\left(\mathbf{x}\right)$
for $l=1,\cdots,N$. Note that, initially, all norms $\left\Vert \widetilde{\psi}_{l}\right\Vert =1$
since all atoms have unit $L^{2}$-norm. 
\par\end{raggedright}
\noindent \begin{raggedright}
Set rank $r=0$ and initialize a pivot vector as $I=\left[1,2,\dots,N\right]$.
\par\end{raggedright}
\noindent \begin{raggedright}
\textbf{for} $k=1,N$
\par\end{raggedright}
\begin{enumerate}
\item \begin{raggedright}
Find the largest norm $\left\Vert \widetilde{\psi}_{i_{j}}\right\Vert $
and its index $i_{j}=\left\{ i_{j}:\,\,\,\left\Vert \widetilde{\psi}_{i_{j}}\right\Vert \ge\left\Vert \widetilde{\psi}_{i_{l}}\right\Vert ,\,\,\,\,l=k,\dots,N\right\} $\textbf{}\\
\textbf{if $\left\Vert \widetilde{\psi}_{i_{j}}\right\Vert <\epsilon$
goto} \textbf{Stage 2}
\par\end{raggedright}
\item \begin{raggedright}
Swap indices $i_{j}$ and $i_{k}$ in the pivot vector $I$.
\par\end{raggedright}
\item \begin{raggedright}
\textbf{set} $r_{i_{k},k}=\left\Vert \widetilde{\psi}_{i_{k}}\right\Vert $
\par\end{raggedright}
\end{enumerate}
\noindent \begin{raggedright}
\textbf{~}
\par\end{raggedright}
\noindent \begin{raggedright}
\textbf{~~~~}~~\textbf{for $j=1,\dots k-1$}
\par\end{raggedright}
\noindent \begin{raggedright}
\textbf{~~~~~~~~}~~~~$s_{i_{k},j}=-\frac{1}{r_{i_{k},k}}\sum_{l=j}^{k-1}r_{i_{k},l}s_{i_{l},j}$
\par\end{raggedright}
\noindent \begin{raggedright}
\textbf{~~~~}~~\textbf{end}
\par\end{raggedright}
\noindent \begin{raggedright}
\textbf{~~~~}~~$s_{i_{k},k}=\frac{1}{r_{i_{k},k}}$
\par\end{raggedright}
\noindent \begin{raggedright}
\textbf{~}
\par\end{raggedright}
\noindent \begin{raggedright}
\textbf{~~~~}~~\textbf{for $j=k+1,N$}
\par\end{raggedright}
\noindent \begin{raggedright}
\textbf{~~~~~~~~}~~~~$G_{i_{j},k}=\left\langle g_{i_{j}},g_{i_{k}}\right\rangle $
\par\end{raggedright}
\noindent \begin{raggedright}
\textbf{~~~~~~~~}~~~~$r_{i_{j}k}=\sum_{l=1}^{k}s_{i_{k},l}G_{i_{j},l}$
\par\end{raggedright}
\noindent \begin{raggedright}
\textbf{~~~~~~~~}~~~~update $\left\Vert \widetilde{\psi}_{i_{j}}\right\Vert =\left(\left\Vert \widetilde{\psi}_{i_{j}}\right\Vert ^{2}-r_{i_{j},k}^{2}\right)^{1/2}$
\par\end{raggedright}
\noindent \begin{raggedright}
\textbf{~~~~}~~\textbf{end}
\par\end{raggedright}
\noindent \begin{raggedright}
~~~~~~update: $r=r+1$
\par\end{raggedright}
\noindent \begin{raggedright}
\textbf{end}
\par\end{raggedright}
\noindent \begin{raggedright}
~
\par\end{raggedright}
\noindent \begin{raggedright}
\textbf{Stage 2: Find new coefficients $\widetilde{c}_{i_{m}}$, $m=1,\cdots,r$.}
\par\end{raggedright}
\noindent \raggedright{}~Compute new coefficients $\widetilde{c}_{i_{m}}=c_{i_{m}}+\sum_{j=r+1}^{N}c_{i_{j}}\left(\sum_{k=m}^{r}r_{i_{j},k}s_{i_{k},m}\right)$,
\textbf{$m=1,\cdots,r$}.
\end{algorithm}

\subsection{Alternative reduction algorithms\label{subsec:Alternative-reduction-algorithms}}

Using Algorithm~\ref{alg:Reduction-algorithm-using-Gram} or \ref{alg:reduction MGS},
half of the significant digits are lost due to poor conditioning (see
examples in \cite{BI-BE-BE:2015} and \cite{RE-DO-BE:2016}). In order
to identify ``best'' linear independent terms we can design a matrix
with a better condition number if instead of the functions of the
mixture we use a ``dual'' family for computing inner products. In
the case of Gaussians (which are well localized), a natural set of
such ``dual'' functions are exponentials with purely imaginary exponents
(which are global functions); computing the inner product with them
reduces to computing their Fourier transform. Therefore, as representatives
of Gaussian atoms we can then use frequency vectors, i.e.\ samples
of their Fourier transforms. Such sampling strategy should be sufficient
to differentiate between all Gaussian atoms; it is fairly straightforward
to achieve this in low dimensions or if the functions admit a separated
representation. Currently, we do not know how to do it efficiently
in high dimensions. Naively it appears to require the construction
of a sample matrix with $\mathcal{O}\left(N\times N\thinspace d\thinspace r\right)$
entries and additional work is required to understand how to lower
this complexity. Alternatively, in dimensions $d=1,2,3$ it is sufficient
to use $\mathcal{O}\left(N\times r^{d}\right)$ samples if we were
to use the straightforward generalization of the algorithm in dimension
$d=1$ described below. In all cases, the last step in this approach
is to compute the matrix ID of the sample matrix.

We present a deterministic algorithm in dimension $d=1$ and note
that its extension to functions in separated form in high dimensions
can follow the approach in \cite{BI-BE-BE:2015}. We consider a univariate
Gaussian mixture
\begin{equation}
u\left(x\right)=\sum_{l=1}^{N}c_{l}g_{l}\left(x\right),\ \ \ x\in\mathbb{R},\label{eq:univariate Gaussian mixture}
\end{equation}
where
\[
g_{l}\left(x\right)=\frac{1}{\pi^{\frac{1}{4}}\sigma_{l}^{\frac{1}{2}}}e^{-\frac{\left(x-\mu_{l}\right)^{2}}{2\sigma_{l}^{2}}},\,\,\,\,\,\left\Vert g_{l}\right\Vert _{2}=1,
\]
and seek the best linear independent subset as in (\ref{eq:function via skeletons terms-1}).
Defining the Fourier transform of $f$ as
\[
\hat{f}\left(\xi\right)=\frac{1}{\sqrt{2\pi}}\int_{-\infty}^{\infty}f\left(x\right)e^{-ix\xi}dx,
\]
we obtain
\[
\hat{g_{l}}\left(\xi\right)=\frac{\sigma_{l}^{\frac{1}{2}}}{\pi^{\frac{1}{4}}}e^{-\frac{\sigma_{l}^{2}\xi^{2}}{2}}e^{-i\mu_{l}\xi}.
\]
We set the highest frequency $\mathbf{\xi}_{high}$ of $\left\{ \hat{g}_{l}\left(\xi\right)\right\} _{l=1}^{N}$
as
\[
\mathbf{\xi}_{high}=\sqrt{\frac{-2\log\frac{\pi^{\frac{1}{4}}10^{-16}}{\sigma^{\frac{1}{2}}}}{\sigma^{2}}},\ \ \ \sigma=\min_{l=1,\cdots,N}\sigma_{l},
\]
such that $\left|\hat{g}_{l}\left(\xi\right)\right|<10^{-16}$ for
$\xi>\mathbf{\xi}_{high}$ and $l=1,\cdots,N$. We also set the lowest
frequency $\mathbf{\xi}_{low}=10^{-2}\sim10^{-3}$, a positive value
obtained experimentally. We then sample the interval $\left[\mathbf{\xi}_{low},\mathbf{\xi}_{high}\right]$
using frequencies $\xi_{k},k=1,\cdots r_{p},$ equally spaced on a
logarithmic scale,
\[
\xi_{k}=e^{\left(\log\mathbf{\xi}_{low}+k\frac{\mathbf{\xi}_{high}-\mathbf{\xi}_{low}}{r_{p}}\right)},
\]
where $r_{p}$ is the number of samples. We choose $r_{p}>r$, where
$r$ is the expected final number of terms. In our setup, the column
\[
\begin{bmatrix}\hat{g}_{l}\left(\xi_{1}\right)\\
\hat{g}_{l}\left(\xi_{2}\right)\\
\vdots\\
\hat{g}_{l}\left(\xi_{r_{p}}\right)
\end{bmatrix}
\]
serves as a representative of the Gaussian $g_{l}$. In this way,
we reduce the problem to that of using the matrix ID. Specifically,
given frequencies $\xi_{k}$, $k=1,\cdots,r_{p},$ we construct a
$r_{p}\times N$ sample matrix $Y$,
\[
Y=\begin{bmatrix}\hat{g_{1}}\left(\xi_{1}\right) & \hat{g_{2}}\left(\xi_{1}\right) & \cdots & \hat{g_{N}}\left(\xi_{1}\right)\\
\hat{g_{1}}\left(\xi_{2}\right) & \hat{g_{2}}\left(\xi_{2}\right) & \cdots & \hat{g_{N}}\left(\xi_{2}\right)\\
\vdots & \vdots & \ddots & \vdots\\
\hat{g_{1}}\left(\xi_{r_{p}}\right) & \hat{g_{2}}\left(\xi_{r_{p}}\right) & \cdots & \hat{g_{N}}\left(\xi_{r_{p}}\right)
\end{bmatrix},
\]
and compute its matrix ID (see e.g.\ \cite{HA-MA-TR:2011}). We obtain
a partition of indices $I=\left[\widehat{I},\widetilde{I}\right],$
where $\widehat{I}=\left[i_{1},\dots,i_{r}\right]$ and $\widetilde{I}=\left[i_{r+1},\dots i_{N}\right]$
denote the skeleton and residual terms respectively. We also obtain
a matrix $X$ such that
\[
Y=Y_{\left[:,\widehat{I}\right]}X,
\]
where $X$ is a $r\times N$ matrix that satisfies $X_{\left[:,\widehat{I}\right]}=I_{r}$.
We then compute the new coefficients as
\[
\tilde{c}_{i_{m}}=c_{i_{m}}+\sum_{n=r+1}^{N}c_{i_{n}}X_{mn},\,\,\,m=1,2,\dots,r,
\]
and use them to approximate 
\[
u\left(x\right)\approx\sum_{m=1}^{r}\tilde{c}_{i_{m}}g_{i_{m}}\left(x\right).
\]
The accuracy of this approximation appears to be the same as the accuracy
of matrix ID. Unfortunately, for multivariate Gaussian atoms, the
size of the matrix $Y$ appears to grow too fast with the dimension
$d$ (except in the case of separated representations where such dependence
is linear). While our approach via frequency vectors can be extended
in a straightforward manner to dimensions $d=2,3$, it is of interest
to construct an algorithm yielding high accuracy approximations in
higher dimensions.

\begin{algorithm}
\caption{Reduction using frequencies (dimension $d=1$)\label{alg:Reduction-using-frequencies}}
\textbf{Inputs}: Atoms $g_{l}$ and coefficients $c_{l}$ in the representation
of $u\left(x\right)=\sum_{l=1}^{N}c_{l}g_{l}\left(x\right)$, number
of frequencies samples $r_{p}$, and error tolerance $\epsilon>0$.
We assume that a subroutine to compute the Fourier transform $\hat{g}_{l}\left(\mathbf{\xi}\right)$
is available.
\noindent \begin{raggedright}
~
\par\end{raggedright}
\textbf{Outputs}: an index set $I=\left[\widehat{I},\widetilde{I}\right]$,
where $\widehat{I}=\left[i_{1},\dots,i_{r}\right]$ contains indices
of $r$ skeleton terms and $\widetilde{I}=\left[i_{r+1},\dots i_{N}\right]$
contains indices of terms being removed from the final representation,
and coefficients $\widetilde{c}_{i_{m}},m=1,\cdots r$, such that
$u\left(x\right)\approx\sum_{m=1}^{r}\widetilde{c}_{i_{m}}g_{i_{m}}\left(x\right)$
with accuracy of the matrix ID.
\begin{enumerate}
\item set $\xi_{low}\in\left[10^{-2},10^{-3}\right]$ and compute $\xi_{high}=\sqrt{\frac{-2\log\frac{\pi^{\frac{1}{4}}10^{-16}}{\sigma^{\frac{1}{2}}}}{\sigma^{2}}}$
where $\sigma=\min_{l=1,\cdots,N}\left\{ \sigma_{l}\right\} $.
\item initialize $\xi_{k}=e^{\left(\log\mathbf{\xi}_{low}+k\frac{\mathbf{\xi}_{high}-\mathbf{\xi}_{low}}{r_{p}}\right)},k=1,\cdots,r_{p}$.
\item construct matrix $Y$ such that $Y_{kl}=\hat{g}_{l}\left(\xi_{k}\right)$.
\item compute a matrix ID of $Y$ to obtain a index partition $\left[\widehat{I},\widetilde{I}\right]$
and a matrix $X$ such that $Y=Y_{\left[:,\widehat{I}\right]}X$
\item compute new coefficients via $\tilde{c}_{i_{m}}=c_{i_{m}}+\sum_{n=r+1}^{N}c_{i_{n}}X_{mn}$,
for $m=1,\dots r$ .
\end{enumerate}
\end{algorithm}

\subsection{Timings and comparisons}

We compare the performance of Algorithms~\ref{alg:Reduction-algorithm-using-Gram},
\ref{alg:reduction MGS} and \ref{alg:Reduction-using-frequencies}
in dimension $d=1$ by considering a univariate Gaussian mixture of
the form (\ref{eq:univariate Gaussian mixture}). We choose $N=10000$,
and sample $c_{l}$, $\sigma_{l}$ and $\mu_{l}$ from uniform distributions
$\mathcal{U}\left(-1,1\right)$, $\mathcal{U}\left(0,0.5\right)$,
and $\mathcal{U}\left(-5,5\right)$, respectively. We apply Algorithms~\ref{alg:Reduction-algorithm-using-Gram},
\ref{alg:reduction MGS} and \ref{alg:Reduction-using-frequencies}
to reduce the number of terms in the Gaussian mixture and display
the original function and the errors of the resulting approximations
in Figure~\ref{fig: reduction_1d}. 

\begin{figure}
\centering{}\includegraphics[scale=0.8]{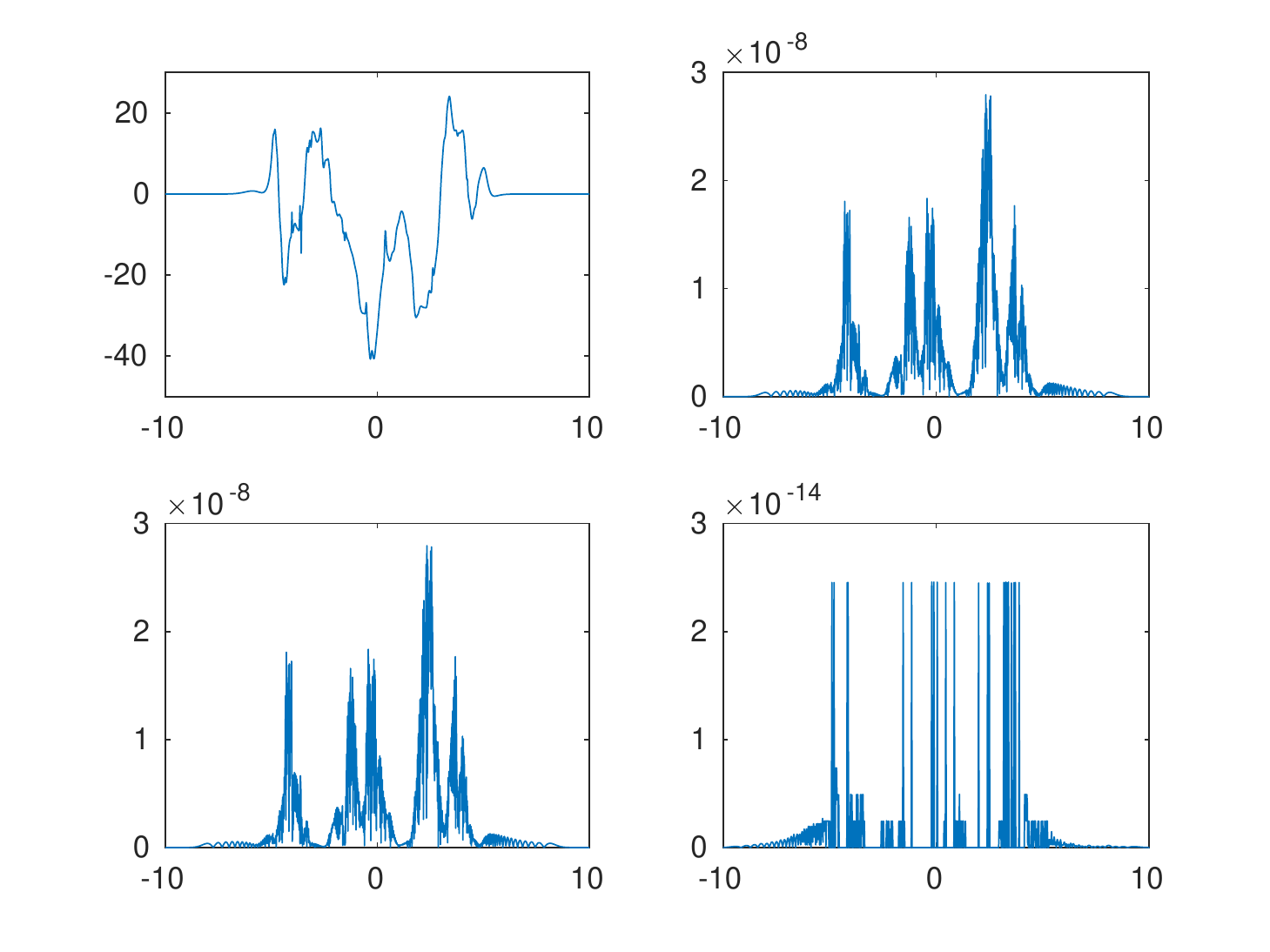}\caption{\label{fig: reduction_1d}A function represented via a Gaussian mixture
with $10^{4}$ terms (top left). Relative errors obtained by using
Algorithm~\ref{alg:Reduction-algorithm-using-Gram} (requested accuracy
$10^{-7}$) yielding $300$ terms (top right), by using Algorithm~\ref{alg:reduction MGS}
(requested accuracy $10^{-7}$) yielding $300$ terms (bottom left)
and by using Algorithm~\ref{alg:Reduction-using-frequencies} (requested
accuracy $10^{-13}$) yielding $398$ terms (bottom right).}
\end{figure}
We run this experiment with different accuracy thresholds and display
the resulting number of skeleton terms obtained by these algorithms
in Table~\ref{tab:Number-of-skeleton terms } (we use Algorithms~\ref{alg:Reduction-algorithm-using-Gram}
and \ref{alg:reduction MGS} implemented in quadruple precision to
make comparison possible for higher accuracies). We observe that the
number of skeleton terms obtained by these algorithms differs only
slightly (this difference is irrelevant for intended applications). 

\begin{table}
\centering{}%
\begin{tabular}{|l|c|l|c|l|c|l|}
\hline 
Requested  & Alg.~\ref{alg:Reduction-algorithm-using-Gram} & Actual & Alg.~\ref{alg:reduction MGS} & Actual  & Alg.~\ref{alg:Reduction-using-frequencies} & Actual\tabularnewline
accuracy &  & accuracy &  & accuracy &  & accuracy\tabularnewline
\hline 
\hline 
$10^{-3}$ & $216$ & $0.197e-3$ & $216$ & $0.197e-3$ & $217$ & $0.220e-3$\tabularnewline
\hline 
$10^{-5}$ & $261$ & $0.244e-5$ & $261$ & $0.244e-5$ & $259$ & $0.368e-5$\tabularnewline
\hline 
$10^{-7}$ & $300$ & $0.279e-7$ & $300$ & $0.279e-7$ & $297$ & $0.383e-7$\tabularnewline
\hline 
$10^{-9}$ & $330$ & $0.140e-9$ & $330$ & $0.140e-9$ & $330$ & $0.173e-9$\tabularnewline
\hline 
$10^{-11}$ & $361$ & $0.143e-11$ & $361$ & $0.143e-11$ & $361$ & $0.204e-11$\tabularnewline
\hline 
$10^{-13}$ & $390$ & $0.179e-13$ & $390$ & $0.179e-13$ & $398$ & $0.924e-14$\tabularnewline
\hline 
\end{tabular}\caption{\label{tab:Number-of-skeleton terms }Number of skeleton terms retained
by\foreignlanguage{english}{ }Algorithms~\ref{alg:Reduction-algorithm-using-Gram}~and~\ref{alg:reduction MGS}
(implemented in quadruple precision) and Algorithm~\ref{alg:Reduction-using-frequencies}
for different approximation accuracies. The requested accuracy is
shown in the first column and (slightly different) resulting accuracies
and the number of skeleton terms of the mixture are shown for each
algorithm separately. In this example\foreignlanguage{english}{ }Algorithms~\ref{alg:Reduction-algorithm-using-Gram}~and~\ref{alg:reduction MGS}
selected the same skeleton terms.}
\end{table}
We also time Algorithm~\ref{alg:Reduction-algorithm-using-Gram}
(implemented in double precision) and examine its scaling as a function
of the number of initial terms and the number of skeleton terms $r$
in dimensions $d=1$ and $d=5$. In this test we fix the initial number
of terms $N$ and vary the number of skeleton terms $r$ and vice
versa. The resulting representation may not achieve a particular accuracy,
but our the goal here is to see how Algorithm~\ref{alg:Reduction-algorithm-using-Gram}
scales in $N$ and $r$. We consider a Gaussian mixture with parameters
$c_{l}$ and $\boldsymbol{\mu}_{l}$ sampled from uniform distributions
$\mathcal{U}\left(-1,1\right)$ and $\mathcal{U}\left(-25,25\right)^{d}$,
respectively, and set matrices $\boldsymbol{\Sigma}_{l}=U_{l}D_{l}U_{l}^{T}$,
where $U_{l}$ is a $d\times d$ random unitary matrix and $D_{l}$
is a $d\times d$ diagonal matrix with positive entries sampled from
the uniform distribution $\mathcal{U}\left(0,0.01\right)^{d}$. We
report dimensions $d$, the number of initial terms $N$, the number
of skeleton terms $r$ and the running time $T$ in seconds in Table~\ref{tab:Timing-of-Algorithm-gram complexity}.
We implemented all algorithms in Fortran90 and compile them with Intel
Fortran Compiler version 18.0.3. The computations are performed on
a single core (without parallelization) using Intel i7-6700 CPU @3.4
GHz on a 64-bit Linux workstation with 64 GB of RAM.
\begin{table}
\centering{}%
\begin{tabular}{|c|c|c|c|c|c|c|c|}
\hline 
\multicolumn{4}{|c|}{$d=1$} & \multicolumn{4}{c|}{$d=5$}\tabularnewline
\hline 
\hline 
\multicolumn{2}{|c|}{Fixed $r=10^{2}$} & \multicolumn{2}{c|}{Fixed $N=10^{4}$} & \multicolumn{2}{c|}{Fixed $r=10^{2}$} & \multicolumn{2}{c|}{Fixed $N=10^{4}$}\tabularnewline
\hline 
$N\left(\times10^{4}\right)$ & $T$ & $r\left(\times10^{2}\right)$ & $T$ & $N\left(\times10^{4}\right)$ & $T$ & $r\left(\times10^{2}\right)$ & $T$\tabularnewline
\hline 
$1$ & $0.0960$ & $1$ & $0.0960$ & $1$ & $5.21$ & $1$ & $5.21$\tabularnewline
\hline 
$2$ & $0.264$ & $2$ & $0.356$ & $2$ & $10.5$ & $2$ & $10.6$\tabularnewline
\hline 
$4$ & $0.612$ & $4$ & $1.17$ & $4$ & $20.7$ & $4$ & $21.2$\tabularnewline
\hline 
$8$ & $1.29$ & $8$ & $3.80$ & $8$ & $42.2$ & $8$ & $43.0$\tabularnewline
\hline 
$16$ & $2.06$ & $16$ & $11.9$ & $16$ & $84.3$ & $16$ & $86.9$\tabularnewline
\hline 
\end{tabular}\caption{\label{tab:Timing-of-Algorithm-gram complexity}Algorithm~\ref{alg:Reduction-algorithm-using-Gram}
timings $T$ (in seconds) to be compared with the theoretical complexity
estimate $\mathcal{O}\left(r^{2}N+p\left(d\right)r\thinspace N\right)$.
We observe the linear dependence on the number of initial terms $N$.
Also, timings reveal that in dimension $d=5$ the cost of computing
inner products dominates and, as a result, timing is practically linear
as well. We note that the algorithm was implemented on a single core
without any parallelization.}
\end{table}

\subsection{Applications of reduction algorithms}

In the following sections, we present several examples of application
of reduction algorithms in both low and high dimensions. In Section~\ref{sec:Applications-of-reduction}
we represent solutions of differential and integral equations in a
functional form and adaptively solve these equations. We start with
the free space Poisson's equation with non-separable right hand side
and then present an example of solving an elliptic problem with variable
coefficients; we consider both examples in dimensions $d=3$ through
$d=7$. In Section~\ref{sec:Kernel-Density-Estimation} we first
use our algorithm in dimension $d=1$ to construct an efficient representation
of the PDF of a cloud of points via kernel density estimation and
compare it with results obtained via the usual approach. We then present
an example of constructing PDFs in high dimensions. Finally, we turn
to kernel summation methods in high dimensions in Section~\ref{sec:Far-field-summation-in},
consider far-field evaluation in such computations and explore the
problem of constructing equivalent sources in a similar setup. We
also illustrate how a reduction algorithm can be used to partition
points into groups (in a hierarchical fashion if desired).

\section{\label{sec:Applications-of-reduction}Reduction algorithms for solving
differential and integral equations}

\subsection{Poisson equation in free space in high dimensions\label{subsec:Poisson-equation-in}}

The Poisson's equation 
\begin{equation}
-\Delta u\mathbf{\left(x\right)}=f\left(\mathbf{x}\right),\ \ \ \mathbf{x}\in\mathbb{R}^{d},\label{eq:poissoneqn}
\end{equation}
arises in numerous applications in nearly all field of physics and
computational chemistry (see e.g.\ \cite{G-D-N-G-B:2006}). The Reduction
Algorithm~\ref{alg:Reduction-algorithm-using-Gram} allows us to
solve this equation in dimensions $d\geq3$ assuming that the charge
distribution $f\left(\mathbf{x}\right)$ is given by, e.g.\ , a linear
combination of multivariate Gaussian atoms. We obtain the solution
via 
\begin{equation}
u\mathbf{\left(x\right)}=\int_{\mathbb{R}^{d}}G\left(\mathbf{x}-\mathbf{y}\right)f\mathbf{\left(y\right)}d\mathbf{y},\label{eq:Solution Poisson's eq via integral}
\end{equation}
where the free-space Green's function for (\ref{eq:poissoneqn}) is
given by the radial function
\begin{equation}
G\mathbf{\left(x\right)}=C_{d}\mathbf{\left\Vert x\right\Vert }^{2-d},\,\,\,\,C_{d}=\frac{\Gamma\left(\frac{d}{2}+1\right)}{d\left(d-2\right)\pi^{\frac{d}{2}}},\label{eq:poissongreenfun}
\end{equation}
where $\mathbf{\left\Vert \cdot\right\Vert }=\mathbf{\left\Vert \cdot\right\Vert }_{2}$
is the standard $l^{2}$-norm. In order to evaluate the integral (\ref{eq:Solution Poisson's eq via integral}),
we approximate the Green's function $G$ via a linear combination
of Gaussians (see e.g.\ \cite{H-F-Y-G-B:2004,BEY-MON:2005,BEY-MON:2010}). 

The error estimates in \cite[Theorem 3]{BEY-MON:2010} are based on
discretizing the integral 
\[
\frac{1}{r^{d-2}}=\frac{1}{\Gamma\left(\frac{d-2}{2}\right)}\int_{-\infty}^{\infty}e^{-r^{2}e^{t}+\frac{d-2}{2}t}dt
\]
as
\begin{equation}
G_{\infty}\left(r,h\right)=\frac{C_{d}h}{\Gamma\left(\frac{d-2}{2}\right)}\sum_{l\in\mathbb{Z}}e^{hl\left(d-2\right)/2}e^{-e^{hl}r^{2}},\label{eq:ApproxViaGaussians}
\end{equation}
where the step size $h$ satisfies
\begin{equation}
h\leq\frac{2\pi}{\log3+\frac{d-2}{2}\log(\cos1)^{-1}+\log\epsilon^{-1}}\label{hEstimate}
\end{equation}
and $\epsilon$ is any user-selected accuracy. Then \cite[Theorem 3]{BEY-MON:2010}
implies
\begin{equation}
\left|G\left(r\right)-G_{\infty}\left(r,h\right)\right|\leq\epsilon\thinspace G\left(r\right),\,\,\,\,\,\textrm{\mbox{ for all }\ensuremath{r>0}}.\label{ErrorSeriesApproximationOfG}
\end{equation}
To estimate the error of approximating the solution $u$ in (\ref{eq:Solution Poisson's eq via integral})
using the series (\ref{eq:ApproxViaGaussians}) instead of the Green's
function (\ref{eq:poissongreenfun}), we first prove the following
lemma.
\begin{lem}
\label{Lemma:ErrorApproxPoissonSolutionPositiveRHS} For any $d\geq3$,
$e^{-1}\ge\epsilon>0$, and $f$ nonnegative in (\ref{eq:poissoneqn}),
there exist a step size $h$ such that
\begin{equation}
\left|u\mathbf{\left(x\right)}-\int_{\mathbb{R}^{d}}G_{\infty}\left(\left\Vert \mathbf{x}-\mathbf{y}\right\Vert ,h\right)f\mathbf{\left(y\right)}d\mathbf{y}\right|\leq\epsilon\left|u\mathbf{\left(x\right)}\right|,\,\,\,\textrm{\mbox{ for all }\,\,\,\ensuremath{\mathbf{x}}}\neq0.\label{eq:thmPowAsGaussians}
\end{equation}
\end{lem}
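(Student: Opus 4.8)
The plan is to reduce the bound \eqref{eq:thmPowAsGaussians} on the solution to the pointwise bound \eqref{ErrorSeriesApproximationOfG} on the kernel, exploiting the hypothesis $f\ge 0$. First I would choose $h$ so that \eqref{hEstimate} holds for the given $\epsilon$; then by \cite[Theorem 3]{BEY-MON:2010} the estimate \eqref{ErrorSeriesApproximationOfG} gives, for every pair $\mathbf{x},\mathbf{y}$ with $\mathbf{x}\neq\mathbf{y}$,
\[
\left|G\left(\left\Vert \mathbf{x}-\mathbf{y}\right\Vert \right)-G_{\infty}\left(\left\Vert \mathbf{x}-\mathbf{y}\right\Vert ,h\right)\right|\le\epsilon\,G\left(\left\Vert \mathbf{x}-\mathbf{y}\right\Vert \right).
\]
The idea is to integrate this inequality against $f\mathbf{\left(y\right)}\,d\mathbf{y}$. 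Because $f$ is nonnegative we may move the absolute value inside the integral and preserve the sign of the majorant:
\[
\left|u\mathbf{\left(x\right)}-\int_{\mathbb{R}^{d}}G_{\infty}\left(\left\Vert \mathbf{x}-\mathbf{y}\right\Vert ,h\right)f\mathbf{\left(y\right)}d\mathbf{y}\right|\le\int_{\mathbb{R}^{d}}\left|G\left(\left\Vert \mathbf{x}-\mathbf{y}\right\Vert \right)-G_{\infty}\left(\left\Vert \mathbf{x}-\mathbf{y}\right\Vert ,h\right)\right|f\mathbf{\left(y\right)}d\mathbf{y}\le\epsilon\int_{\mathbb{R}^{d}}G\left(\left\Vert \mathbf{x}-\mathbf{y}\right\Vert \right)f\mathbf{\left(y\right)}d\mathbf{y}.
\]
Finally, again using $f\ge 0$ and the positivity of the Green's function $G$, the last integral equals $u\mathbf{\left(x\right)}\ge 0$, hence equals $\left|u\mathbf{\left(x\right)}\right|$, which yields \eqref{eq:thmPowAsGaussians}.

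A couple of technical points need attention. One must justify interchanging $\left|\cdot\right|$ with $\int$ and applying \eqref{ErrorSeriesApproximationOfG} under the integral sign: this is just the triangle inequality for integrals together with the fact that the integrand $\left|G-G_{\infty}\right|f$ is nonnegative and dominated by $\epsilon\,G f$, and the latter is integrable precisely because $u\mathbf{\left(x\right)}=\int G f<\infty$ (which is implicit in \eqref{eq:Solution Poisson's eq via integral} being well-defined). One should also note that $G_{\infty}\left(\left\Vert \mathbf{x}-\mathbf{y}\right\Vert ,h\right)f\mathbf{\left(y\right)}$ is then automatically integrable, since it differs from the integrable $G f$ by something bounded by $\epsilon\, G f$, so the left-hand side of \eqref{eq:thmPowAsGaussians} is meaningful. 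The restriction $\epsilon\le e^{-1}$ enters only through the admissibility of $h$ in \eqref{hEstimate} (it keeps $\log\epsilon^{-1}$ positive and large enough) and is otherwise not used.

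The main obstacle, such as it is, is the nonnegativity hypothesis on $f$: it is what turns the pointwise relative error on $G$ into a relative error on $u$, since it lets $\int G f$ be identified with $\left|u\right|$ rather than merely bounded by $\int G\left|f\right|$. For sign-changing $f$ the same argument only gives $\left|u-\int G_{\infty}f\right|\le\epsilon\int G\left|f\right|$, and $\int G\left|f\right|$ need not be comparable to $\left|u\right|$; obtaining the clean relative bound \eqref{eq:thmPowAsGaussians} genuinely requires $f\ge 0$. Everything else is a routine application of Fubini/triangle inequality together with the already-established kernel estimate \eqref{ErrorSeriesApproximationOfG}.
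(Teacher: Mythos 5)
Your proof is correct and follows exactly the same route as the paper's: apply the pointwise kernel estimate (\ref{ErrorSeriesApproximationOfG}) under the integral sign via the triangle inequality, and use $f\ge 0$ together with the positivity of $G$ to identify $\epsilon\int G\,f$ with $\epsilon\left|u\right|$. Your added remarks on integrability and on why the nonnegativity hypothesis is essential are sound but go beyond what the paper records.
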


\begin{proof}
From (\ref{eq:Solution Poisson's eq via integral}) and (\ref{ErrorSeriesApproximationOfG}),
we have
\begin{align*}
\left|u\mathbf{\left(x\right)}-\int_{\mathbb{R}^{d}}G_{\infty}\left(\left\Vert \mathbf{x}-\mathbf{y}\right\Vert ,h\right)f\mathbf{\left(y\right)}d\mathbf{y}\right| & =\left|\int_{\mathbb{R}^{d}}\left[G\left(\mathbf{x}-\mathbf{y}\right)-G_{\infty}\left(\left\Vert \mathbf{x}-\mathbf{y}\right\Vert ,h\right)\right]f\mathbf{\left(y\right)}d\mathbf{y}\right|\\
 & \leq\int_{\mathbb{R}^{d}}\left|G\left(\mathbf{x}-\mathbf{y}\right)-G_{\infty}\left(\left\Vert \mathbf{x}-\mathbf{y}\right\Vert ,h\right)\right|\left|f\mathbf{\left(y\right)}\right|d\mathbf{y}\\
 & \leq\epsilon\int_{\mathbb{R}^{d}}G\left(\mathbf{x}-\mathbf{y}\right)\left|f\mathbf{\left(y\right)}\right|d\mathbf{y}=\epsilon\int_{\mathbb{R}^{d}}G\left(\mathbf{x}-\mathbf{y}\right)f\mathbf{\left(y\right)}d\mathbf{y}\\
 & =\epsilon\thinspace u\mathbf{\left(x\right)}=\epsilon\left|u\mathbf{\left(x\right)}\right|.
\end{align*}
\end{proof}
In our examples, we always consider functions $f$ represented in
the form
\begin{equation}
f\mathbf{\left(x\right)}=\sum_{l=1}^{N}c_{l}g_{l}\left(\mathbf{x},\boldsymbol{\mu}_{l},\boldsymbol{\Sigma}_{l}\right),\label{eq:f_as_gaussian_mixture}
\end{equation}
for some Gaussian atoms as in (\ref{eq:Gaussian_atoms}). In particular,
\[
u\mathbf{\left(x\right)}=\sum_{l=1}^{N}c_{l}\int_{\mathbb{R}^{d}}G\left(\mathbf{x}-\mathbf{y}\right)g_{l}\left(\mathbf{x},\boldsymbol{\mu}_{l},\boldsymbol{\Sigma}_{l}\right)d\mathbf{y}
\]
 and 
\[
u^{+}\mathbf{\left(x\right)}=\sum_{l=1}^{N}\left|c_{l}\right|\int_{\mathbb{R}^{d}}G\left(\mathbf{x}-\mathbf{y}\right)g_{l}\left(\mathbf{x},\boldsymbol{\mu}_{l},\boldsymbol{\Sigma}_{l}\right)d\mathbf{y}
\]
 are both bounded. We assume that 
\begin{equation}
\left\Vert u^{+}\right\Vert _{L^{\infty}}\leq c\left\Vert u\right\Vert _{L^{\infty}}\label{eq:assumption_inf_norm}
\end{equation}
 for a moderate size constant $c$. This assumption prevents representations
of $u$ that involve large coefficients $c_{l}$ of opposite signs.
We then have
\begin{lem}
\label{Lemma:ErrorApproxPoissonSolution} Let $d\geq3$, $e^{-1}\ge\epsilon>0$,
and $f$ as in (\ref{eq:f_as_gaussian_mixture}). If (\ref{eq:assumption_inf_norm})
holds, then there exist a step size $h$ such that
\begin{equation}
\left|\int_{\mathbb{R}^{d}}G\left(\mathbf{x}-\mathbf{y},h\right)f\mathbf{\left(y\right)}d\mathbf{y}-\int_{\mathbb{R}^{d}}G_{\infty}\left(\left\Vert \mathbf{x}-\mathbf{y}\right\Vert ,h\right)f\mathbf{\left(y\right)}d\mathbf{y}\right|\leq\epsilon\thinspace c\left|u\mathbf{\left(x\right)}\right|,\,\,\,\textrm{\mbox{ for all }\,\,\,\ensuremath{\mathbf{x}}}\neq0.\label{eq:thmPowAsGaussians-2}
\end{equation}
\end{lem}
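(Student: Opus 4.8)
The plan is to mirror the proof of Lemma~\ref{Lemma:ErrorApproxPoissonSolutionPositiveRHS}, the only new feature being that the charge density $f$ in (\ref{eq:f_as_gaussian_mixture}) is now permitted to change sign. First I would combine the two integrals into a single one against the kernel difference, treating the first kernel as the exact Green's function (\ref{eq:poissongreenfun}) so that the left-hand side is the modulus of
\[
\int_{\mathbb{R}^{d}}\bigl[G(\mathbf{x}-\mathbf{y})-G_{\infty}(\|\mathbf{x}-\mathbf{y}\|,h)\bigr]f(\mathbf{y})\,d\mathbf{y},
\]
and then pass the absolute value inside the integral to obtain the bound $\int_{\mathbb{R}^{d}}\bigl|G(\mathbf{x}-\mathbf{y})-G_{\infty}(\|\mathbf{x}-\mathbf{y}\|,h)\bigr|\,|f(\mathbf{y})|\,d\mathbf{y}$. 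Choosing $h$ to satisfy (\ref{hEstimate}) so that the pointwise estimate (\ref{ErrorSeriesApproximationOfG}) holds for every $r>0$, I would replace $|G-G_{\infty}|$ by $\epsilon\,G$ and factor out $\epsilon$, reducing the whole left-hand side to a bound of the form $\epsilon\int_{\mathbb{R}^{d}}G(\mathbf{x}-\mathbf{y})\,|f(\mathbf{y})|\,d\mathbf{y}$.

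The step where the sign of $f$ enters comes next. In Lemma~\ref{Lemma:ErrorApproxPoissonSolutionPositiveRHS} one used $|f|=f$ to turn this remaining integral directly into $u(\mathbf{x})$; here that identity fails, so instead I would exploit that every Gaussian atom $g_{l}$ in (\ref{eq:f_as_gaussian_mixture}) is nonnegative. The triangle inequality gives $|f|=\bigl|\sum_{l}c_{l}g_{l}\bigr|\le\sum_{l}|c_{l}|\,g_{l}$ pointwise, and since $G\ge0$ this yields
\[
\int_{\mathbb{R}^{d}}G(\mathbf{x}-\mathbf{y})\,|f(\mathbf{y})|\,d\mathbf{y}\le\sum_{l=1}^{N}|c_{l}|\int_{\mathbb{R}^{d}}G(\mathbf{x}-\mathbf{y})\,g_{l}(\mathbf{y})\,d\mathbf{y}=u^{+}(\mathbf{x}),
\]
which is exactly the majorant $u^{+}$ introduced just before the statement. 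At this point I would have the clean pointwise estimate that the left-hand side is bounded by $\epsilon\,u^{+}(\mathbf{x})$, with no appeal yet to any global hypothesis.

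The final step, and the one I expect to be the crux, is to convert the $u^{+}$ on the right back into $u$ so as to match the claimed bound $\epsilon\,c\,|u(\mathbf{x})|$. This is precisely the role of hypothesis (\ref{eq:assumption_inf_norm}): it quantifies the inflation caused by cancellation between positive and negative coefficients $c_{l}$, forbidding the pathological representations in which the positive majorant $u^{+}$ dwarfs the true potential $u$. I would invoke (\ref{eq:assumption_inf_norm}) at this point to dominate $u^{+}(\mathbf{x})$ by $c\,|u(\mathbf{x})|$, thereby passing from $\epsilon\,u^{+}(\mathbf{x})$ to $\epsilon\,c\,|u(\mathbf{x})|$ and closing the estimate for all $\mathbf{x}\neq0$. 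The main obstacle is concentrated entirely in this last transfer: the first two paragraphs produce an honest pointwise bound by $\epsilon\,u^{+}(\mathbf{x})$ purely from the kernel approximation (\ref{ErrorSeriesApproximationOfG}) and the positivity of the atoms, so the whole weight of the result rests on how tightly the hypothesis (\ref{eq:assumption_inf_norm}) ties the auxiliary function $u^{+}$ to $u$.
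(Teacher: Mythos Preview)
Your argument matches the paper's almost line for line up to the pointwise bound $\bigl|u(\mathbf{x})-\tilde{u}(\mathbf{x})\bigr|\le\epsilon\,u^{+}(\mathbf{x})$; the paper simply cites Lemma~\ref{Lemma:ErrorApproxPoissonSolutionPositiveRHS} for this, while you spell out the intermediate inequality $|f|\le\sum_{l}|c_{l}|g_{l}$, but the content is identical.

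The only gap is in your final step. Hypothesis~(\ref{eq:assumption_inf_norm}) is an $L^{\infty}$ inequality, $\|u^{+}\|_{L^{\infty}}\le c\,\|u\|_{L^{\infty}}$, not a pointwise one; it does \emph{not} give $u^{+}(\mathbf{x})\le c\,|u(\mathbf{x})|$, and indeed that pointwise bound is generally false (take any $\mathbf{x}$ where $u$ vanishes by cancellation but $u^{+}$ does not). What the hypothesis actually yields is
\[
\bigl|u(\mathbf{x})-\tilde{u}(\mathbf{x})\bigr|\le\epsilon\,u^{+}(\mathbf{x})\le\epsilon\,\|u^{+}\|_{L^{\infty}}\le\epsilon\,c\,\|u\|_{L^{\infty}},
\]
i.e.\ a uniform bound with $\|u\|_{L^{\infty}}$ on the right, and this is exactly what the paper's proof concludes. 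The appearance of $|u(\mathbf{x})|$ rather than $\|u\|_{L^{\infty}}$ on the right-hand side of~(\ref{eq:thmPowAsGaussians-2}) is a typo in the statement; do not try to prove it as written.
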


\begin{proof}
It follows from Lemma~\ref{Lemma:ErrorApproxPoissonSolutionPositiveRHS}
that 
\[
\left|u\mathbf{\left(x\right)}-\tilde{u}\mathbf{\left(x\right)}\right|\leq\epsilon u^{+}\mathbf{\left(x\right)}
\]
where 
\[
\tilde{u}\mathbf{\left(x\right)}=\int_{\mathbb{R}^{d}}G_{\infty}\left(\left\Vert \mathbf{x}-\mathbf{y}\right\Vert ,h\right)f\mathbf{\left(y\right)}d\mathbf{y}.
\]
Therefore, using (\ref{eq:assumption_inf_norm}) , we have
\[
\left\Vert u\mathbf{\left(x\right)}-\tilde{u}\right\Vert _{L^{\infty}}\leq\epsilon u^{+}\mathbf{\left(x\right)}\leq\epsilon\thinspace c\left\Vert u\right\Vert _{L^{\infty}}.
\]
\end{proof}
In practice, when using (\ref{eq:ApproxViaGaussians}), we truncate
the sum 
\begin{equation}
G_{M,N}\left(r\right)=\frac{C_{d}h}{\Gamma\left(\frac{d-2}{2}\right)}\sum_{l=M}^{N}e^{hl\left(d-2\right)/2}e^{-e^{hl}r^{2}}=\frac{C_{d}h}{\Gamma\left(\frac{d-2}{2}\right)}\sum_{l=1}^{N_{terms}}e^{h\left(M+l-1\right)\left(d-2\right)/2}e^{-e^{h\left(M+l-1\right)}r^{2}}\label{eq:ApproxViaGaussians-finite}
\end{equation}
so that the removed terms contribute less than $\epsilon$ and we
limit the range of $r$ to some interval of the form $\left[\delta,R\right]$;
the resulting approximation has $N_{terms}=N-M+1$. In our computations
we set the range of $r$ to be $\left[10^{-10},10^{10}\right]$ and
the accuracy to be $\epsilon=10^{-14}$. As shown in Table~\ref{table_poisson-single-rhs},
the number of terms in (\ref{eq:ApproxViaGaussians-finite}) depends
on the dimension only weakly. 

As an illustration, we first demonstrate our approach for a single
Gaussian,
\begin{equation}
f\left(\mathbf{x}\right)=e^{-\frac{1}{2}\left(\mathbf{x}-\boldsymbol{\mu}\right)^{T}\boldsymbol{\Sigma}^{-1}\left(\mathbf{x}-\boldsymbol{\mu}\right)}.\label{eq:RHSsingleGaussian}
\end{equation}
Using (\ref{eq:ApproxViaGaussians-finite}), we approximate the solution
$u$ by
\begin{eqnarray}
u_{\epsilon}\mathbf{\left(x\right)} & = & \frac{C_{d}h}{\Gamma\left(\frac{d-2}{2}\right)}\sum_{l=1}^{N_{terms}}e^{h\left(M+l-1\right)\left(d-2\right)/2}\int_{\mathbb{R}^{d}}e^{-e^{h\left(M+l-1\right)}\left\Vert \mathbf{x}-\mathbf{y}\right\Vert ^{2}}f\mathbf{\left(y\right)}d\mathbf{y}\nonumber \\
 & = & \frac{C_{d}h}{\Gamma\left(\frac{d-2}{2}\right)}\sum_{l=1}^{N_{terms}}e^{h\left(M+l-1\right)\left(d-2\right)/2}\int_{\mathbb{R}^{d}}e^{-e^{h\left(M+l-1\right)}\left\Vert \mathbf{x}-\mathbf{y}\right\Vert ^{2}}e^{-\frac{1}{2}\left(\mathbf{y}-\boldsymbol{u}\right)^{T}\boldsymbol{\Sigma}^{-1}\left(\mathbf{y}-\boldsymbol{\mu}\right)}d\mathbf{y}.
\end{eqnarray}
Evaluating the integral explicitly (see Appendix~A for details),
we obtain
\begin{equation}
u_{\epsilon}\mathbf{\left(x\right)}=\sum_{l=1}^{N_{terms}}c_{l}g_{l}\left(\mathbf{x},\boldsymbol{\mu}_{l},\boldsymbol{\Sigma}_{l}\right)\label{eq:u_epsilon}
\end{equation}
where
\[
c_{l}=\frac{C_{d}h\pi^{\frac{3d}{4}}}{\Gamma\left(\frac{d-2}{2}\right)}e^{-h\left(M+l-1\right)\left(d+2\right)/2}\frac{\left(\det\Sigma\right)^{\frac{1}{2}}}{\left(\det\Sigma_{l}\right)^{\frac{1}{4}}},
\]
\[
\boldsymbol{\Sigma}_{l}=\Sigma+\begin{bmatrix}\frac{1}{2e^{h\left(M+l-1\right)}} &  &  & 0\\
 & \frac{1}{2e^{h\left(M+l-1\right)}}\\
 &  & \ddots\\
0 &  &  & \frac{1}{2e^{h\left(M+l-1\right)}}
\end{bmatrix},
\]
and
\[
\boldsymbol{\mu}_{l}=\boldsymbol{\mu}.
\]

The number of terms in the representation of $u_{\epsilon}$ is excessive
and we reduce it using Algorithm~\ref{alg:Reduction-algorithm-using-Gram}
to obtain our final approximation as 
\begin{equation}
\tilde{u}\left(\mathbf{x}\right)=\sum_{m=1}^{\widetilde{N}}\tilde{c}_{i_{m}}g_{i_{m}}\left(\mathbf{x},\boldsymbol{\mu}_{i_{m}},\boldsymbol{\Sigma}_{i_{m}}\right),\label{eq:U_tilde}
\end{equation}
where $\tilde{c}_{i_{m}}$ are the new coefficients and $i_{m}\in\widehat{I}$
(see Algorithm~\ref{alg:Reduction-algorithm-using-Gram} for details).
\begin{rem}
The representation of the kernel in (\ref{eq:ApproxViaGaussians-finite})
can be obtained for a large spatial range since the number of terms
$N_{terms}$ is proportional to the logarithm of the range. For this
reason our approach is viable in high dimensions while employing the
Fast Fourier Transform is not an option due to the size of the Fourier
domain, c.f. \cite{VI-GR-FE:2016}. 
\end{rem}

In order to demonstrate the performance of our approach, we choose
the right hand side $f$ to be a Gaussian mixture with $100$ terms,
\[
f\left(\mathbf{x}\right)=\sum_{i=1}^{100}c_{f_{i}}e^{-\frac{1}{2}\left(\mathbf{x}-\boldsymbol{\mu}_{f_{i}}\right)^{T}\boldsymbol{\Sigma}_{f_{i}}^{-1}\left(\mathbf{x}-\boldsymbol{\mu}_{f_{i}}\right)}.
\]
In the Gaussian mixture $f$, the coefficients $c_{f_{i}}$ and means
$\boldsymbol{\mu}_{f_{i}}$ are sampled from a one and a $d$-dimensional
standard normal distributions respectively. The symmetric positive
definite matrices $\boldsymbol{\Sigma}_{f_{i}}$ are constructed as
\begin{equation}
\boldsymbol{\Sigma}_{f_{i}}=U_{i}^{T}U_{i}+\frac{1}{10}I_{d},\label{eq:construction of sigma}
\end{equation}
where $U_{i}$ is a $d\times d$ matrix of standard normally distributed
numbers and $I_{d}$ is the $d\times d$ identity matrix. We obtain
$u_{\epsilon}$ in (\ref{eq:u_epsilon}) and apply Algorithm~\ref{alg:Reduction-algorithm-using-Gram}
to reduce the number of terms to obtain $\tilde{u}$ in (\ref{eq:U_tilde}).
The results are displayed in Table~\ref{table_poisson-100rhs}, where
we show the dimension of the problem, $d$, the number of terms, $N_{terms}$,
in the approximation of the Green's function, the number of terms,
$N_{tot}$, in the solution $u_{\epsilon}$ before reduction and its
accuracy, the number of terms, $\widetilde{N}$, in the solution $\widetilde{u}$
after reduction and its accuracy, and, finally, the relative error
between $u_{\epsilon}$ and $\widetilde{u}$. 

In order to estimate the accuracy of the solution $\tilde{u}$ of
(\ref{eq:poissoneqn}), we define the errors 
\[
h_{\epsilon}\left(\mathbf{x}\right)=-\Delta u_{\epsilon}\left(\mathbf{x}\right)-f\left(\mathbf{x}\right),\ \ \ \widetilde{h}\left(\mathbf{x}\right)=-\Delta\tilde{u}\left(\mathbf{x}\right)-f\left(\mathbf{x}\right),\ \ \ h\left(\mathbf{x}\right)=u_{\epsilon}\left(\mathbf{x}\right)-\tilde{u}\left(\mathbf{x}\right).
\]
The usual approach to ascertain the size of $h_{\epsilon}$, $\widetilde{h}$,
and $h$ by evaluating them on a lattice of grid points is impractical
in high dimensions. Instead, we compute values of $h_{\epsilon}$,
$\widetilde{h}$ and $h$ at a collection of points in principle directions
of the right hand side $f\left(\mathbf{x}\right)$. To be precise,
we first solve the eigenvalue problem for all matrices $\Sigma_{f_{i}}$
, $i=1,\dots,100$, 
\[
\Sigma_{f_{i}}=\sum_{j=1}^{d}\lambda_{j}^{\left(i\right)}\mathbf{v}_{j}^{\left(i\right)}\left(\mathbf{v}_{j}^{\left(i\right)}\right)^{T}.
\]
Here the eigenvectors $\mathbf{v}_{j}^{\left(i\right)}$ identify
principle directions for each Gaussian in $f$ so that we can select
an appropriate set of samples along those directions. We note that
the actual range of the eigenvalues of matrices $\Sigma_{f_{i}}$,
$\left\{ \lambda_{j}^{\left(i\right)}\right\} _{{i=1,..,100\atop j=1,\dots,d}}$,
is $\left[\frac{1}{10},40\right]$. Next, for each pair of $\left\{ \lambda_{j}^{\left(i\right)},\mathbf{v}_{j}^{\left(i\right)}\right\} $,
we find an interval $\left[-s_{j}^{\left(i\right)},s_{j}^{\left(i\right)}\right]$
by solving
\[
e^{-\frac{\left(s_{j}^{\left(i\right)}\right)^{2}}{2\lambda_{j}^{\left(i\right)}}}=10^{-10}\Leftrightarrow s_{j}^{\left(i\right)}=\left(-2\lambda_{j}^{\left(i\right)}\log10^{-10}\right)^{1/2}
\]
and generate and equally-spaced grid in $\left[-s_{j}^{\left(i\right)},s_{j}^{\left(i\right)}\right]$
as
\[
s_{jk}^{\left(i\right)}=-s_{j}^{\left(i\right)}+\left(k-1\right)\frac{2s_{j}^{\left(i\right)}}{N_{s}-1},\ \ \ k=1,\dots N_{s}.
\]
Finally, we select sample points 
\[
\mathbf{x}_{jk}^{\left(i\right)}=s_{jk}^{\left(i\right)}\mathbf{v}_{j}^{\left(i\right)}+\boldsymbol{\mu}_{f_{i}}
\]
and evaluate $h_{\epsilon}\left(\mathbf{x}_{jk}^{\left(i\right)}\right)$,
$\widetilde{h}\left(\mathbf{x}_{jk}^{\left(i\right)}\right)$ and
$h\left(\mathbf{x}_{jk}^{\left(i\right)}\right)$ for $i=1,\dots,100$,
$j=1,\dots,d$ and $k=1,\dots,N_{s}$. In our experiment, we set $N_{s}=10$,
and report the resulting errors in Table~\ref{table_poisson-single-rhs}
and \ref{table_poisson-100rhs}. For these two tables, we use the
notation $\left\Vert f\right\Vert _{\infty}=\max_{i,j,k}\left|f\left(\mathbf{x}_{jk}^{\left(i\right)}\right)\right|$. 

\begin{table}[h]
\begin{centering}
\begin{tabular}{|c|c|c|c|c|c|c|}
\hline 
$d$ & $N_{terms}$  & $N_{tot}$ & $\left\Vert h_{\epsilon}\right\Vert _{\infty}/\left\Vert f\right\Vert _{\infty}$ & $\widetilde{N}$ & $\left\Vert \widetilde{h}\right\Vert _{\infty}/\left\Vert f\right\Vert _{\infty}$ & $\left\Vert h\right\Vert _{\infty}/\left\Vert u_{\epsilon}\right\Vert _{\infty}$\tabularnewline
\hline 
\hline 
$3$ & $345$ & $247$ & $1.2e-9$ & $183$ & $1.8e-8$ & $4.0e-8$\tabularnewline
\hline 
$4$ & $397$ & $295$ & $1.3e-9$ & $227$ & $1.2e-7$ & $9.8e-8$\tabularnewline
\hline 
$5$ & $386$ & $282$ & $5.6e-10$ & $207$ & $7.8e-8$ & $1.7e-9$\tabularnewline
\hline 
$6$ & $343$ & $279$ & $1.9e-10$ & $197$ & $1.8e-7$ & $2.6e-9$\tabularnewline
\hline 
$7$ & $354$ & $240$ & $1.7e-10$ & $154$ & $2.6e-7$ & $3.9e-9$\tabularnewline
\hline 
\end{tabular}
\par\end{centering}
~\\
~
\centering{}\caption{\label{table_poisson-single-rhs} Number of terms and relative errors
of solving Poisson's equation in dimensions $d=3,\dots,7$ where the
forcing term is a single randomly generated multivariate Gaussian.
The number of Gaussians to represent the Green's function in (\ref{eq:ApproxViaGaussians-finite})
is $N_{terms}$, the number of terms of $u_{\epsilon}$ in (\ref{eq:u_epsilon})
after truncation of coefficients to $10^{-10}$ is $N_{tot}$ and,
after applying Algorithm~\ref{alg:Reduction-algorithm-using-Gram},
the number of terms of $\tilde{u}$ in (\ref{eq:U_tilde}) is $\tilde{N}$.}
\end{table}
\begin{table}[h]
\begin{centering}
\begin{tabular}{|c|c|c|c|c|c|c|}
\hline 
$d$ & $N_{terms}$  & $N_{tot}$ & $\left\Vert h_{\epsilon}\right\Vert _{\infty}/\left\Vert f\right\Vert _{\infty}$ & $\widetilde{N}$ & $\left\Vert \widetilde{h}\right\Vert _{\infty}/\left\Vert f\right\Vert _{\infty}$ & $\left\Vert h\right\Vert _{\infty}/\left\Vert u_{\epsilon}\right\Vert _{\infty}$\tabularnewline
\hline 
\hline 
$3$ & $345$ & $24694$ & $9.1e-10$ & $2978$ & $2.6e-4$ & $1.4e-6$\tabularnewline
\hline 
$4$ & $397$ & $29564$ & $5.1e-10$ & $3910$ & $4.4e-5$ & $3.7e-7$\tabularnewline
\hline 
$5$ & $386$ & $28103$ & $2.8e-10$ & $4602$ & $5.1e-5$ & $4.3e-7$\tabularnewline
\hline 
$6$ & $343$ & $27813$ & $1.7e-10$ & $5111$ & $9.3e-6$ & $1.3e-7$\tabularnewline
\hline 
$7$ & $354$ & $24153$ & $7.2e-11$ & $5591$ & $3.0e-6$ & $8.3e-8$\tabularnewline
\hline 
\end{tabular}
\par\end{centering}
~\\
~
\centering{}\caption{\label{table_poisson-100rhs}Number of terms and relative errors of
solving Poisson's equation in dimensions $d=3,\dots,7$ where the
forcing term is a linear combination of $100$ randomly generated
multivariate Gaussians. The information displayed in each column is
described in Table~\ref{table_poisson-single-rhs}. The number of
terms $\widetilde{N}$ is significantly larger than that in Table~\ref{table_poisson-single-rhs}
since the principle directions of the matrices $\boldsymbol{\Sigma}_{f_{i}}$
are chosen at random causing the solution to have a larger number
of terms. }
\end{table}

\subsection{Second order elliptic equation with a variable coefficient}

In this example, we consider the second order linear elliptic equation,
\begin{equation}
-\nabla\cdot\left(a\left(\mathbf{x}\right)\nabla u\left(\mathbf{x}\right)\right)+k^{2}u\left(\mathbf{x}\right)=f\left(\boldsymbol{\mathbf{x}}\right),\ \ \ \boldsymbol{\mathbf{x}}\in\mathbb{R}^{d}\label{eq:variable coefficent eq}
\end{equation}
where $d\geq3$ and $k>0.$ We assume that the variable coefficient
$a\left(\boldsymbol{x}\right)$ is of the form 
\begin{equation}
a\left(\mathbf{x}\right)=1+e^{-\frac{1}{2}\left(\mathbf{x}-\boldsymbol{\mu}_{a}\right)^{t}\Sigma_{a}^{-1}\left(\mathbf{x}-\boldsymbol{\mu}_{a}\right)},\label{eq:variable_coeff_a}
\end{equation}
such that $\max_{\mathbf{x}}\left|a\left(\mathbf{x}\right)\right|/\min_{\mathbf{x}}\left|a\left(\mathbf{x}\right)\right|=2$,
and choose the forcing function to be
\begin{equation}
f\left(\mathbf{x}\right)=e^{-\frac{1}{2}\left(\mathbf{x}-\boldsymbol{\mu}_{f}\right)^{t}\Sigma_{f}^{-1}\left(\mathbf{x}-\boldsymbol{\mu}_{f}\right)},\ \ \ \left\Vert f\right\Vert _{L^{\infty}}=1.\label{eq:forcing_function_f}
\end{equation}
The free space Green's function for the problem with a constant coefficient
\[
-\Delta u\left(\mathbf{x}\right)+k^{2}u\left(\mathbf{x}\right)=f\left(\mathbf{x}\right),\ \ \ \mathbf{x}\in\mathbb{R}^{d}
\]
is given by
\begin{equation}
G\left(\mathbf{x}\right)=\left(2\pi\right)^{-\frac{d}{2}}\left(\frac{k}{\left\Vert \mathbf{x}\right\Vert }\right)^{\frac{d}{2}-1}K_{\frac{d}{2}-1}\left(k\left\Vert \mathbf{x}\right\Vert \right),\label{eq:bound state helmholtz green fun}
\end{equation}
where $K_{\frac{d}{2}-1}$ is a modified Bessel function of the second
kind of order $\frac{d}{2}-1$. We approximate the Green's function
(\ref{eq:bound state helmholtz green fun}) by discretizing the integral
\[
G\left(\mathbf{x}\right)=\left(4\pi\right)^{-\frac{d}{2}}\int_{-\infty}^{\infty}e^{-\frac{\left\Vert \boldsymbol{x}\right\Vert ^{2}e^{t}}{4}-k^{2}e^{-t}+\left(\frac{d}{2}-1\right)t}dt
\]
as
\[
G_{\infty}\left(r\right)=\left(4\pi\right)^{-\frac{d}{2}}h\sum_{l\in\mathbb{Z}}e^{-\frac{r^{2}e^{hl}}{4}-k^{2}e^{-hl}+\left(\frac{d}{2}-1\right)hl},
\]
where the step size $h$ is selected to achieve the desired accuracy
$\epsilon$ (see \cite{BEY-MON:2010}). We then truncate the sum in
the same manner as in (\ref{eq:ApproxViaGaussians-finite}) and obtain
\begin{eqnarray*}
G_{M,N}\left(r\right) & = & \left(4\pi\right)^{-\frac{d}{2}}h\sum_{l=M}^{N}e^{-\frac{r^{2}e^{hl}}{4}-k^{2}e^{-hl}+\left(\frac{d}{2}-1\right)hl}\\
 & = & \left(4\pi\right)^{-\frac{d}{2}}h\sum_{l=1}^{N_{terms}}e^{-\frac{r^{2}e^{h\left(l+M-1\right)}}{4}-k^{2}e^{-h\left(l+M-1\right)}+\left(\frac{d}{2}-1\right)h\left(l+M-1\right)},
\end{eqnarray*}
where the number of terms $N_{terms}=M-N+1$ in $G_{M,N}$ weakly
depends on the dimension (see Table\ \ref{table variable coefficient not aligned}).
In our computation, $k=1$ and we select the accuracy range of $G_{M,N}$
to be $\left[10^{-7},10^{2}\right]$ with $\epsilon=10^{-10}$. 

We rewrite (\ref{eq:variable coefficent eq}) as an integral equation,
\begin{equation}
u\left(\mathbf{x}\right)-\int_{\mathbb{R}^{d}}G\left(\mathbf{x}-\mathbf{y}\right)\nabla\cdot\left(\left(a\left(\mathbf{y}\right)-1\right)\nabla u\left(\mathbf{y}\right)\right)d\mathbf{y}=\int_{\mathbb{R}^{d}}G\left(\mathbf{x}-\mathbf{y}\right)f\left(\mathbf{y}\right)d\mathbf{y}.\label{eq:variable coefficient integral eq}
\end{equation}
In order to solve (\ref{eq:variable coefficient integral eq}), we
first observe that, in a multiresolution basis, for a finite accuracy
$\epsilon>0$, the non-standard form (see \cite{BE-CO-RO:1991}) of
the Green\textquoteright s function for (\ref{eq:variable coefficient integral eq})
is banded on all scales. This implies that a set of basis functions
that can represent the solution $u$ is fully determined by the size
of the bands of the multiresolution representation of the Green\textquoteright s
function and of the right hand side $f$. This suggests that due to
the interaction between the essential supports of the functions involved,
we can identify a set of Gaussians atoms by performing one (or a few
more) iterations of the integral equation (\ref{eq:variable coefficient integral eq})
even if the fixed-point iteration does not converge. In this approach
the accuracy is determined \textit{a posteriori} and can be improved
by additional iterations. From the so generated set of atoms, we obtain
a basis of Gaussian atoms $g_{l}\left(\mathbf{x},\boldsymbol{\mu}_{l},\boldsymbol{\Sigma}_{l}\right)$
by applying the reduction algorithm to identify the best linearly
independent subset. Using this basis, we define the ansatz for $u$
as 
\begin{equation}
\tilde{u}\left(\mathbf{x}\right)=\sum_{l=1}^{\widetilde{N}}c_{l}g_{l}\left(\mathbf{x},\boldsymbol{\mu}_{l},\boldsymbol{\Sigma}_{l}\right),\label{eq:ansatz}
\end{equation}
for some (unknown) coefficients $c_{l}$, $l=1,\cdots\widetilde{N}$,
to be determined; substituting $\tilde{u}$ into either (\ref{eq:variable coefficient integral eq})
or the differential equation (\ref{eq:variable coefficent eq}) and
computing appropriate inner products, we solve a system of linear
algebraic equations for the coefficients $c_{l}$. 

Specifically, we rewrite (\ref{eq:variable coefficient integral eq})
as
\[
u\left(\mathbf{x}\right)=\int_{\mathbb{R}^{d}}G\left(\mathbf{x}-\mathbf{y}\right)\nabla\cdot\left(\left(a\left(\mathbf{y}\right)-1\right)\nabla u\left(\mathbf{y}\right)\right)d\mathbf{y}+\int_{\mathbb{R}^{d}}G\left(\mathbf{x}-\mathbf{y}\right)f\left(\mathbf{y}\right)d\mathbf{y}
\]
which leads to the iteration,
\begin{eqnarray}
u_{n+1}\left(\mathbf{x}\right) & = & u_{0}\left(\mathbf{x}\right)+\int_{\mathbb{R}^{d}}G\left(\mathbf{x}-\mathbf{y}\right)\nabla\cdot\left(\left(a\left(\mathbf{y}\right)-1\right)\nabla u_{n}\left(\mathbf{y}\right)\right)d\mathbf{y}\label{eq:PDE iteration}\\
u_{0}\left(\mathbf{x}\right) & = & \int_{\mathbb{R}^{d}}G\left(\mathbf{x}-\mathbf{y}\right)f\left(\mathbf{y}\right)d\mathbf{y}.\nonumber 
\end{eqnarray}
To identify a set of Gaussian atoms, we perform one (or several) iteration(s),
using as the initial $u_{0}$ the collection of atoms in the representation
of $\int_{\mathbb{R}^{d}}G\left(\mathbf{x}-\mathbf{y}\right)f\left(\mathbf{y}\right)d\mathbf{y}$.
Using Algorithm~\ref{alg:Reduction-algorithm-using-Gram} we then
reduce the number of atoms by removing linearly dependent terms (we
may repeat this step if we need to improve accuracy). As a result,
we determine a basis of Gaussian atoms $g_{l}\left(\mathbf{x},\boldsymbol{\mu}_{l},\boldsymbol{\Sigma}_{l}\right)$
to represent the solution of equation (\ref{eq:variable coefficent eq})
as in (\ref{eq:ansatz}). To find the coefficients $c_{l}$, $l=1,\cdots\widetilde{N}$,
we substitute (\ref{eq:ansatz}) into the weak formulation of (\ref{eq:variable coefficent eq})
to obtain the linear system
\begin{equation}
\sum_{l=1}^{\widetilde{N}}c_{l}\left\langle -\nabla\cdot\left(a\left(\mathbf{x}\right)\nabla g_{l}\left(\mathbf{x},\boldsymbol{\mu}_{l},\boldsymbol{\Sigma}_{l}\right)\right),g_{k}\left(\mathbf{x},\boldsymbol{\mu}_{k},\boldsymbol{\Sigma}_{k}\right)\right\rangle =\left\langle f\left(\mathbf{x}\right),g_{k}\left(\mathbf{x},\boldsymbol{\mu}_{k},\boldsymbol{\Sigma}_{k}\right)\right\rangle ,\ k=1,\cdots,\widetilde{N}.\label{eq:Linear system}
\end{equation}
The inner products $\left\langle -\nabla\cdot\left(a\left(\mathbf{x}\right)\nabla g_{l}\left(\mathbf{x},\boldsymbol{\mu}_{l},\boldsymbol{\Sigma}_{l}\right)\right),g_{k}\left(\mathbf{x},\boldsymbol{\mu}_{k},\boldsymbol{\Sigma}_{k}\right)\right\rangle $
are computed explicitly using integration by parts and the fact that
\[
\nabla_{\boldsymbol{x}}e^{-\frac{1}{2}\left(\mathbf{x}-\boldsymbol{\mu}\right)^{t}\Sigma^{-1}\left(\mathbf{x}-\boldsymbol{\mu}\right)}=-\nabla_{\boldsymbol{\mu}}e^{-\frac{1}{2}\left(\mathbf{x}-\boldsymbol{\mu}\right)^{t}\Sigma^{-1}\left(\mathbf{x}-\boldsymbol{\mu}\right)},
\]
leading to integrals involving only Gaussians (the result is then
differentiated with respect to the shift parameter $\boldsymbol{\mu}$). 

Using the SVD, we solve the linear system (\ref{eq:Linear system})
to obtain an approximate solution
\[
\widetilde{u}\left(\mathbf{x}\right)=\sum_{l=1}^{\widetilde{N}}c_{l}g_{l}\left(\mathbf{x},\boldsymbol{\mu}_{l},\boldsymbol{\Sigma}_{l}\right).
\]

\subsubsection{Error estimates and results}

Since the exact solution $u$ is not available, we verify that $\widetilde{u}$
is an approximate solution of (\ref{eq:variable coefficent eq}) by
evaluating the Fourier transform of the error on a particular set
of vectors. Note that

\[
h_{error}\left(\mathbf{x}\right)=-\nabla\cdot\left(a\left(\mathbf{x}\right)\nabla\tilde{u}\left(\mathbf{x}\right)\right)+k^{2}\tilde{u}\left(\boldsymbol{\mathbf{x}}\right)-f\left(\mathbf{x}\right).
\]
is a combination of Gaussians and products of Gaussians with low degree
polynomials and, therefore, we can explicitly compute its Fourier
transform, 
\begin{equation}
\widehat{h}_{error}\left(\boldsymbol{\mathbf{\xi}}\right)=\frac{1}{\left(2\pi\right)^{\frac{d}{2}}}\int_{\mathbb{R}^{d}}\left(-\nabla\cdot\left(a\left(\mathbf{x}\right)\nabla\tilde{u}\left(\mathbf{x}\right)\right)+k^{2}\tilde{u}\left(\mathbf{x}\right)-f\left(\mathbf{x}\right)\right)e^{-i\pi\mathbf{x}\cdot\boldsymbol{\mathbf{\xi}}}d\mathbf{x}.\label{eq:h-hat-error}
\end{equation}
We then evaluate $\widehat{h}_{error}$ for selected vector arguments
$\xi$, which we call frequency vectors. To select these vectors,
we use the principal directions of the matrices $\boldsymbol{\Sigma}_{l}$
of the Gaussian atoms in the representation of $\widetilde{u}$. To
this end, we solve the eigenvalue problem
\[
\boldsymbol{\Sigma}_{l}=\sum_{j=1}^{d}\lambda_{j}^{\left(l\right)}\mathbf{v}_{j}^{\left(l\right)}\left(\mathbf{v}_{j}^{\left(l\right)}\right)^{T}
\]
and select the frequency vectors along the principal directions of
$\Sigma_{l}$. In our experiment, we choose $s_{min}=10^{-5}$ and
$s_{max}=\left(-2\log\left(10^{-10}\right)/\lambda_{min}\right)^{\frac{1}{2}}$,
where $\lambda_{min}=\min_{l=1,\dots,\widetilde{N},j=1,\dots,d}\lambda_{j}^{\left(l\right)}$,
such that 
\[
e^{-\frac{\lambda_{min}s^{2}}{2}}\leq10^{-10}
\]
for $s>s_{max}$. We then sample $s_{k}$, $k=1,\dots,N_{s}$ using
a logarithmic scale on the interval $\left[s_{min},s_{max}\right]$
\[
s_{k}=e^{\log\left(s_{min}+\left(k-1\right)\frac{s_{max}-s_{min}}{N_{s}-1}\right)},
\]
and select the $d\cdot N_{s}\cdot\widetilde{N}$ frequency vectors
$\boldsymbol{\boldsymbol{\xi}}_{jk}^{\left(l\right)}$ to be
\[
\boldsymbol{\boldsymbol{\xi}}_{jk}^{\left(l\right)}=s_{k}\mathbf{v}_{j}^{\left(l\right)}\ \ \ \mbox{for}\ j=1,\dots,d,\ k=1,\dots,N_{s},\ l=1,\dots,\widetilde{N}.
\]
In our experiment, we choose $N_{s}=10$.

We notice that the number of terms in the solution $\widetilde{u}$
grows significantly with the dimension, if the matrices $\boldsymbol{\Sigma}_{a}$
and $\boldsymbol{\Sigma}_{f}$ are not related (they are effectively
random) and/or the range of their eigenvalues is large. In our first
experiment, we select matrices $\boldsymbol{\Sigma}_{a}$ and $\boldsymbol{\Sigma}_{f}$
in (\ref{eq:variable_coeff_a})-(\ref{eq:forcing_function_f}) to
be
\[
\Sigma_{a}=UD_{a}U^{T},\ \ \text{and}\ \ \Sigma_{f}=UD_{f}U^{T},
\]
where $U$ is a $d\times d$ random unitary matrix and $D_{a}$ and
$D_{f}$ are $d\times d$ diagonal matrices. We set the first two
diagonal entries of $D_{a}$ and $D_{f}$ to be $0.1$ and $20$,
and sample the other diagonal entry/entries from a uniform distribution
$\mathcal{U}\left(0.1,20\right)$. A random permutation is applied
after all diagonal entries are generated. In the second experiment,
we construct matrices $\boldsymbol{\Sigma}_{a}$ and $\boldsymbol{\Sigma}_{f}$
as
\[
\Sigma_{a}=U_{a}D_{a}U_{a}^{T},\ \ \ \Sigma_{f}=U_{f}D_{f}U_{f}^{T}
\]
where $U_{a}$ and $U_{f}$ are $d\times d$ random unitary matrices,
$D_{a}$ and $D_{f}$ are $d\times d$ diagonal matrices. We set the
first two diagonal entries of $D_{a}$ and $D_{f}$ to be $0.1$ and
$1$, and sample the other diagonal entry/entries from a uniform distribution
$\mathcal{U}\left(0.1,1\right)$. Again we randomly permute the diagonals
of $D_{a}$ and $D_{f}$. We also notice that if the centers $\mathbf{\mu}_{a}$
and $\mathbf{\mu}_{f}$ are far away (no overlapping essential supports),
then solving (\ref{eq:variable coefficent eq}) is effectively the
same as solving the Poisson's equation. In our tests, we select $\mathbf{\mu}_{f}=\boldsymbol{0}$
and $\mathbf{\mu}_{a}=\left(1,0,\dots,0\right)^{T}$ so that $\left\Vert \mathbf{\mu}_{a}-\mathbf{\mu}_{f}\right\Vert _{2}=1$. 

In both experiments, we iterate (\ref{eq:PDE iteration}) once to
generate a set of Gaussian atoms. The results are displayed in Tables~\ref{tab:variable coefficient aligned}
and \ref{table variable coefficient not aligned} where we show the
dimension of the problem, $d$, the number of terms $N_{terms}$ in
the approximation of the Green's function, the number of terms $N_{tot}$
obtained by performing one iteration in (\ref{eq:PDE iteration}),
the number of terms $\widetilde{N}$ in the solution $\widetilde{u}$
after reduction and the resulting accuracy. For these two tables,
we use the notation $\left\Vert \widehat{g}\right\Vert _{\infty}=\max_{j,k,l}\left|\widehat{g}\left(\boldsymbol{\boldsymbol{\xi}}_{jk}^{\left(l\right)}\right)\right|$. 

\begin{table}
~
\begin{centering}
\begin{tabular}{|c|c|c|c|c|}
\hline 
$d$ & $N_{terms}$ & $N_{tot}$ & $\widetilde{N}$ & $\left\Vert \widehat{h}_{error}\right\Vert _{\infty}/\left\Vert \widehat{f}\right\Vert _{\infty}$\tabularnewline
\hline 
\hline 
$3$ & $104$ & $5985$ & $1184$ & $1.8e-6$\tabularnewline
\hline 
$4$ & $109$ & $11624$ & $2649$ & $3.5e-6$\tabularnewline
\hline 
$5$ & $117$ & $16014$ & $3640$ & $3.8e-6$\tabularnewline
\hline 
$6$ & $121$ & $22466$ & $4377$ & $8.0e-5$\tabularnewline
\hline 
$7$ & $127$ & $31522$ & $5573$ & $2.5e-5$\tabularnewline
\hline 
\end{tabular}
\par\end{centering}
~\\
~
\centering{}\caption{\label{tab:variable coefficient aligned}Results of solving a second
order elliptic equation with a variable coefficient where the principle
directions of the matrices $\Sigma_{a}$ and $\Sigma_{f}$ in (\ref{eq:variable_coeff_a})
and (\ref{eq:forcing_function_f}) are aligned and their eigenvalues
are in the range $\left(0.1,20\right)$.}
\end{table}
\begin{table}
\begin{centering}
\begin{tabular}{|c|c|c|c|c|}
\hline 
$d$ & $N_{terms}$ & $N_{tot}$ & $\widetilde{N}$ & $\left\Vert \widehat{h}_{error}\right\Vert _{\infty}/\left\Vert \widehat{f}\right\Vert _{\infty}$\tabularnewline
\hline 
\hline 
$3$ & $104$ & $15348$ & $2586$ & $8.1e-7$\tabularnewline
\hline 
$4$ & $109$ & $32688$ & $6883$ & $1.9e-6$\tabularnewline
\hline 
$5$ & $117$ & $40110$ & $11793$ & $2.4e-6$\tabularnewline
\hline 
$6$ & $121$ & $64664$ & $18205$ & $6.7e-5$\tabularnewline
\hline 
$7$ & $127$ & $75059$ & $22966$ & \tabularnewline
\hline 
\end{tabular}
\par\end{centering}
~\\
~
\centering{}\caption{\label{table variable coefficient not aligned}Results of solving
a second order elliptic equation with a variable coefficient where
the principle directions of the matrices $\Sigma_{a}$ and $\Sigma_{f}$
in (\ref{eq:variable_coeff_a}) and (\ref{eq:forcing_function_f})
are not aligned and their eigenvalues are in the range $\left(0.1,1\right)$.
We note that the accuracy estimation in dimension $d=7$ is computationally
expensive and we skipped it. }
\end{table}

\section{\label{sec:Kernel-Density-Estimation}Kernel Density Estimation}

We describe a new algorithmic approach to Kernel Density Estimation
(KDE) based on Algorithm~\ref{alg:Reduction-algorithm-using-Gram}.
KDE is a non-parametric method for constructing the PDF of data points
used in cluster analysis, classification, and machine learning. The
standard KDE construction is practical only in low dimensions, $d=1,2,3$
as it requires a Fourier transform of the data points, the cost of
which grows exponentially with dimension (see e.g.\ \cite{O-K-C-C-O:2016,BER-PIG:2011}
for a technique based on the Fourier transform). Our approach avoids
using the Fourier transform and is applicable in high dimensions.
We note that a randomized approach that can be used for KDE estimation
was recently suggested in \cite{MAR-BIR:2017}. In this paper we do
not provide a comparison with other techniques that are applicable
to KDE (e.g.\ reproducing kernel techniques which formulate the problem
as a minimization of an objective function, see for example \cite{CAW-TAL:2002}
and references therein). We plan to develop our approach further and
provide an appropriate comparison with other methods elsewhere. 

The essence of KDE (see e.g.\ \cite{SILVER:1986}) is to associate
a smooth PDF $f$ with data points $\mathbf{x}_{j}\in\mathbb{R}^{d}$,
$j=1,\dots,N$, 
\begin{equation}
f\left(\mathbf{x},h\right)=\frac{1}{N}\sum_{j=1}^{N}K_{h}\left(\mathbf{x}-\mathbf{x}_{j}\right),\label{eq:kernel sum}
\end{equation}
where $K_{h}\left(\mathbf{x}\right)=K\left(\mathbf{x}/h\right)/h$
and $K$ is a nonnegative function with zero mean and $\int_{\mathbb{R}^{d}}K\left(\mathbf{x}\right)d\mathbf{x}=1$.
In what follows, we use a multivariate Gaussian as the kernel $K$.
A naive implementation of (\ref{eq:kernel sum}) would require $N$
evaluations of the kernel $K_{h}$ for each point $\mathbf{x}$ so
that the computational cost of using this approach in a straightforward
manner is prohibitive if $N$ is large. The selection of the parameter
$h$, the so-called bandwidth or scale parameter, is a well recognized
delicate issue and, in our one dimensional example, we use $h$ computed
within Mathematica$\,^{TM}$ implementation of KDE.

In our approach, for a user selected target accuracy $\epsilon$,
we seek a \textit{subset} of linear independent terms in (\ref{eq:kernel sum})
and express the remaining terms as their linear combinations. Thus,
by removing redundant terms in the representation of $f\left(\mathbf{x},h\right)$,
we construct
\begin{equation}
F\left(\mathbf{x},h\right)=\sum_{\ell=1}^{r}a_{\ell}K_{h}\left(\mathbf{x}-\mathbf{x}_{j_{\ell}}\right),\label{eq:kernel-approx}
\end{equation}
where $r\ll N$ and
\[
\left|f\left(\mathbf{x},h\right)-F\left(\mathbf{x},h\right)\right|\le\epsilon.
\]
In other words, with accuracy $\epsilon$, we obtain an approximation
of the function $f$ by a function $F$ with a small number of terms.
This reduction algorithm seeking a subset of linear independent terms
in (\ref{eq:kernel sum}) can be used for any kernel in $\mathbb{R}^{d}$
such that the cost of evaluating the multidimensional inner product
\[
g_{ij}=\int_{\mathbb{R}^{d}}K_{h}\left(\mathbf{x}-\mathbf{x}_{i}\right)K_{h}\left(\mathbf{x}-\mathbf{x}_{j}\right)d\mathbf{x}
\]
is reasonable i.e.\ depends mildly on the dimension $d$). For multivariate
Gaussians, the values $g_{ij}$ are available via an explicit expression,
see Section~\ref{subsec:Inner-product-of}. 

The computational cost of our algorithm is $\mathcal{O}\left(r^{2}N+p\left(d\right)r\thinspace N\right)$.
Here $N$ is the original number of data points and $r$ is the final
number of terms in the chosen linearly independent subset and $p\left(d\right)$
is the cost of computing the inner product between two terms of the
mixture. In typical KDE applications $p\left(d\right)\sim d$ since
usual kernels admit a separated representation. 

\subsection{A comparison in dimension $d=1$}

In low dimensions, the standard approach to KDE relies on using the
Fast Fourier transform to both, assist in estimating the bandwidth
parameter $h$ and in constructing a more efficient representation
of (\ref{eq:kernel sum}) on an equally spaced grid (see e.g.\ \cite[Section 3.5]{SILVER:1986}).
Implementations of this approach can be found in many packages in
dimensions $d=1,2$, e.g.\ Matlab, Mathematica, etc. While this approach
is appropriate in low dimensions, an extension of this algorithm to
high dimensions is prevented by the ``curse of dimensionality''.
Thus, in high dimensions, only values at selected points can be computed
(see \cite{BO-GR-KR:2010} and Matlab implementation of KDE in high
dimensions).

In order to illustrate our approach we provide a simple example with
a bimodal distribution in dimension $d=1$. Although this example
is in one variable, it allows us to emphasize the differences between
the existing KDE methods and our approach. We generate test data by
using two normal distributions with means $\mu_{1}=0$ and $\mu_{2}=4$.
The exact PDF of this data is given by 
\begin{equation}
g\left(x\right)=\frac{1}{2}\left(\frac{1}{\sqrt{2\pi}}e^{-\frac{1}{2}\frac{x^{2}}{\sigma^{2}}}+\frac{1}{\sqrt{2\pi}}e^{-\frac{1}{2}\frac{\left(x-4\right)^{2}}{\sigma^{2}}}\right),\label{eq:true distribution}
\end{equation}
where $\sigma=1$. We then use KDE implemented in Mathematica$\,^{TM}$
with the Gaussian kernel $K$. Using $N=10^{5}$ data samples drawn
from (\ref{eq:true distribution}) (so that the initial sum (\ref{eq:kernel sum})
has $N$ terms), the scaling parameter was set to $h=0.20121412622314902019$.
The true distribution (\ref{eq:true distribution}) and the approximation
error obtained by Mathematica$\,^{TM}$ by reducing (\ref{eq:kernel sum})
from $N=10^{5}$ terms to 241 Gaussian terms centered on an equally
spaced grid are displayed in Figure~\ref{fig:The-distribution-}.
Using our algorithm with the same parameter $h$, we reduce (\ref{eq:kernel sum})
from $N=10^{5}$ terms to 102 terms centered at a selected \textit{subset
of the} \textit{original} \textit{data points}. The error of the resulting
approximation is displayed in Figure~\ref{fig:The-distribution-},
where we also show the difference between (\ref{eq:kernel sum}) and
(\ref{eq:kernel-approx}). The main point here is that while the standard
approach in high dimensions becomes impractical (as it requires a
multidimensional grid), our approach proceeds unchanged since the
cost of the reduction algorithm depend on dimension only mildly. 
\begin{rem}
We selected a much higher accuracy for reduction than the difference
between the original and estimated PDFs in order to illustrate the
fact that the accuracy limit of Algorithm~\ref{alg:Reduction-algorithm-using-Gram}
of about $7$ to $8$ digits is more than sufficient for this application.
Note that to achieve a comparable accuracy for estimation of a PDF
via KDE one needs $\approx10^{16}$ points since the accuracy improves
as $\mathcal{O}\left(1/\sqrt{N}\right)$. 
\end{rem}

~
\begin{rem}
Using KDE in high dimensions requires an additional assumption that
points are located in a vicinity of a low dimensional manifold (see
comments in e.g.\ \cite[Section 1.5.3]{SCOTT:2015} and/or Figure~5
in \cite{MAR-BIR:2017}).
\end{rem}

\begin{figure}
\begin{centering}
\includegraphics[scale=0.3]{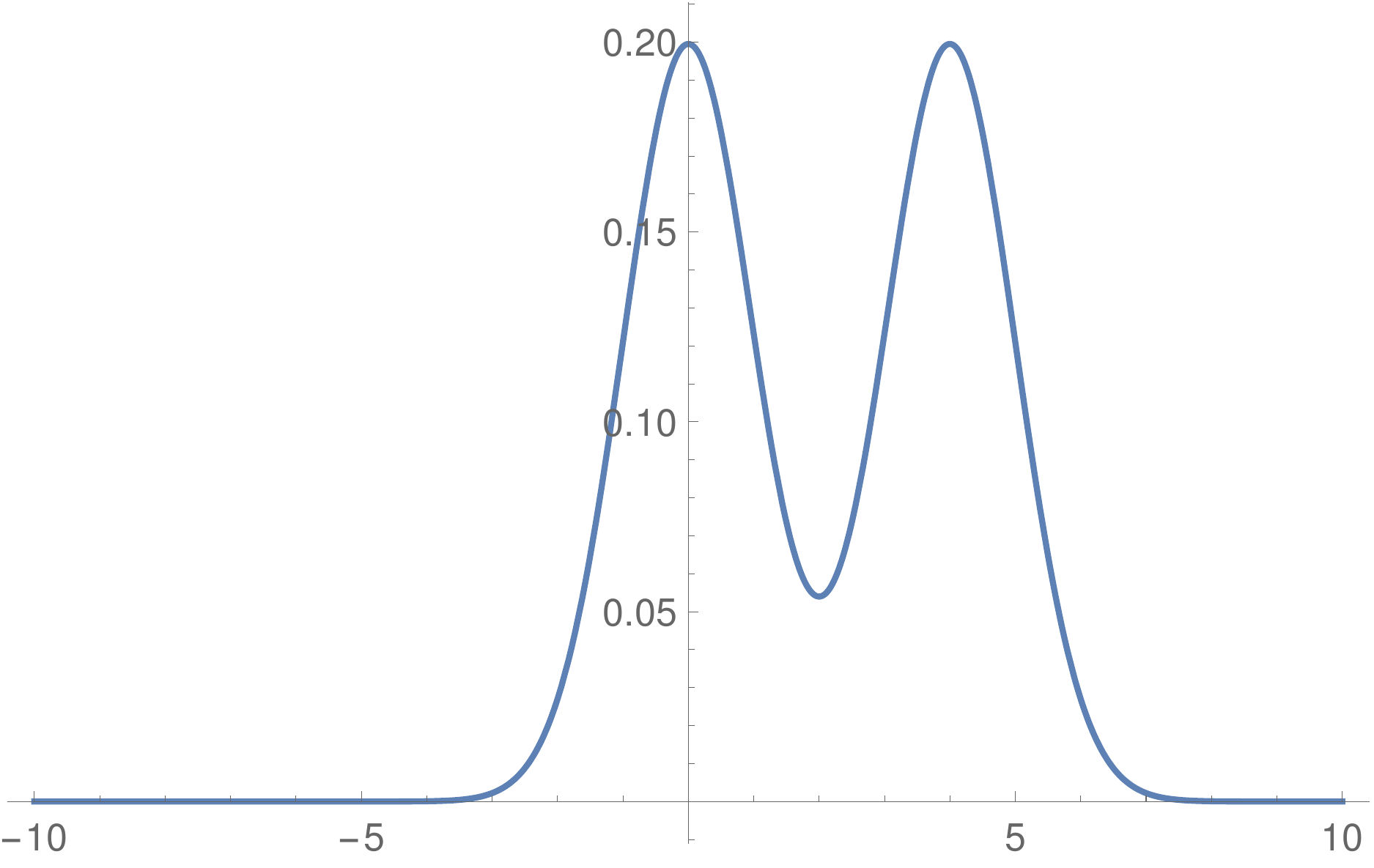}\includegraphics[scale=0.3]{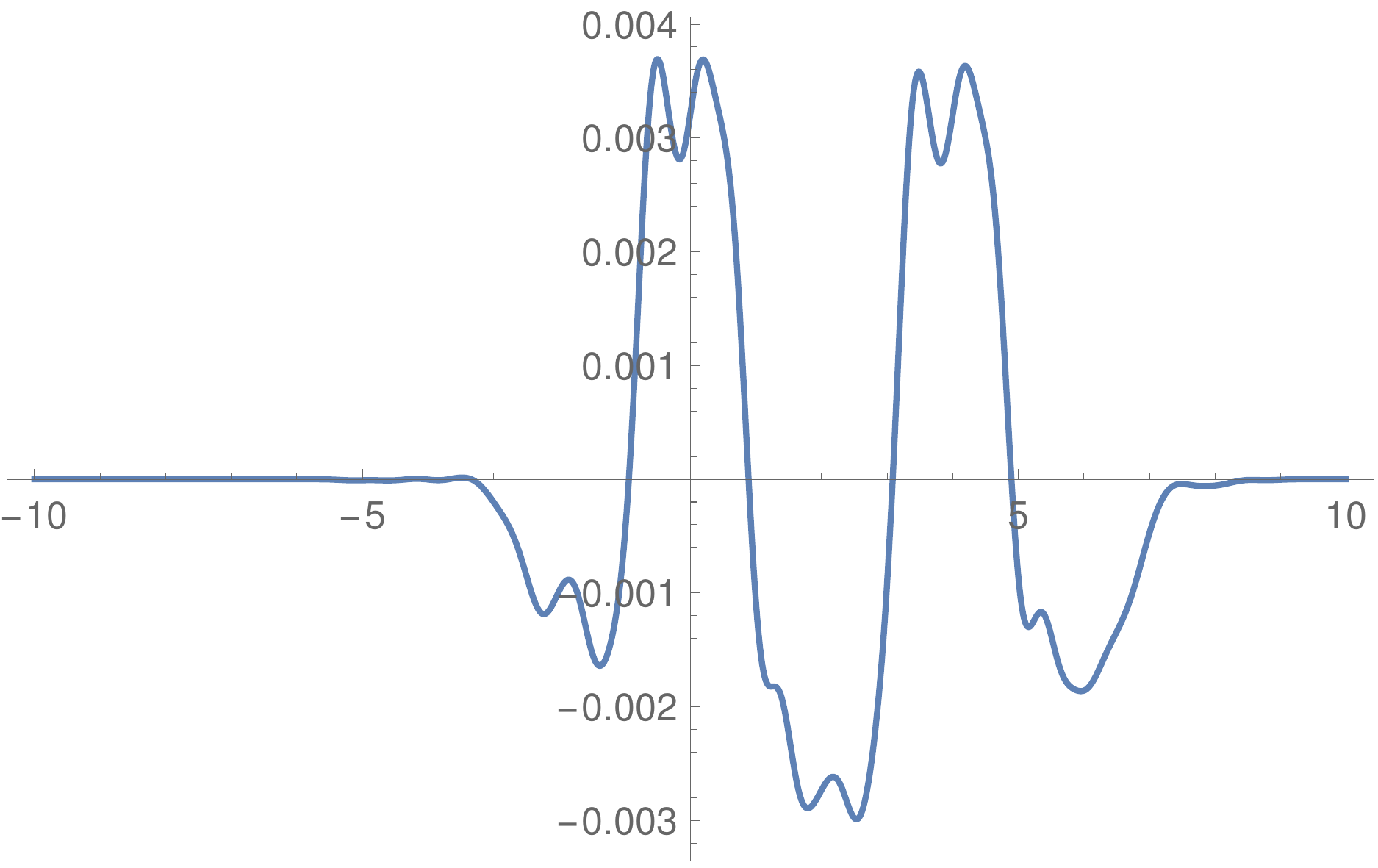}
\par\end{centering}
\begin{centering}
\includegraphics[scale=0.3]{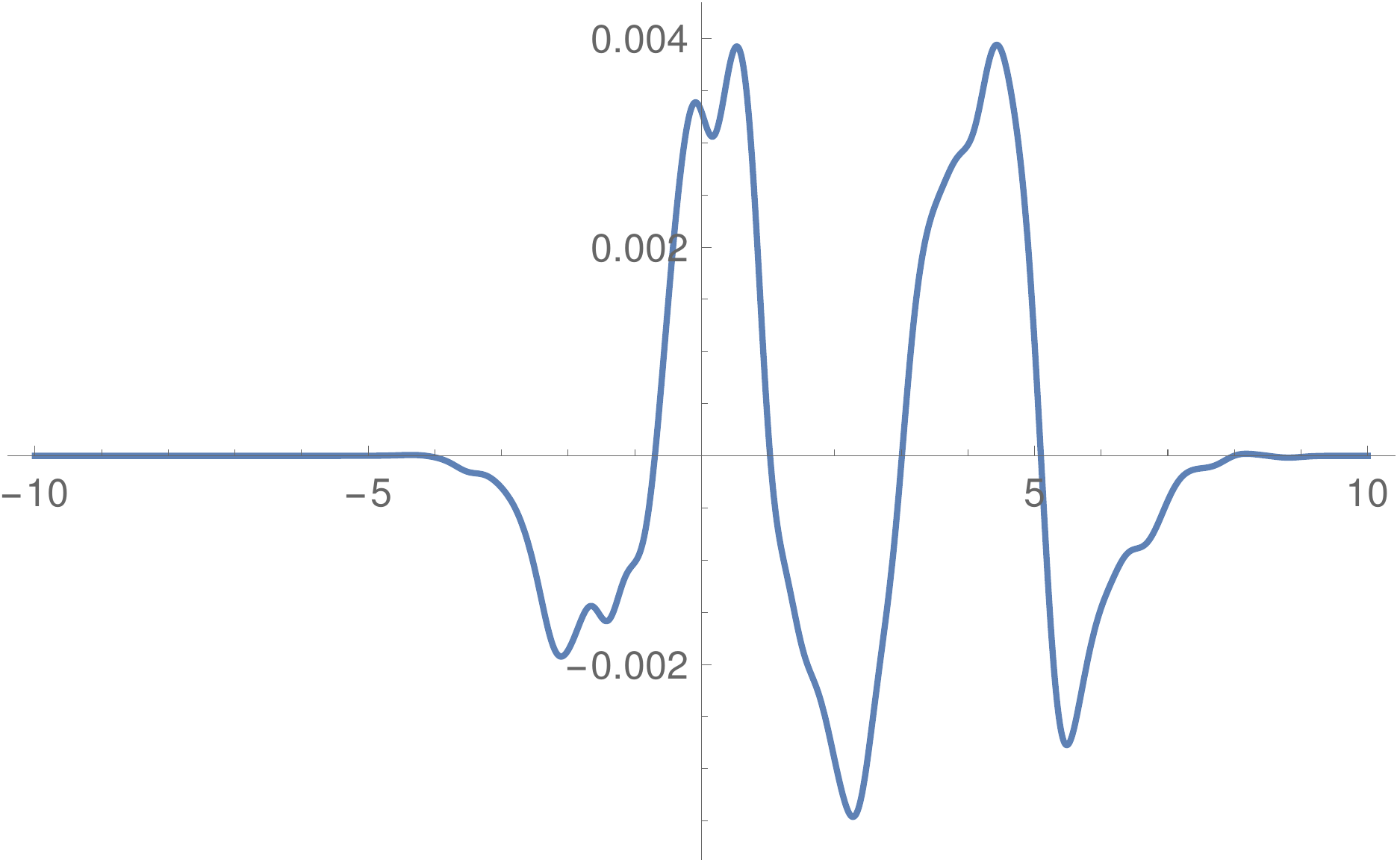}\includegraphics[scale=0.3]{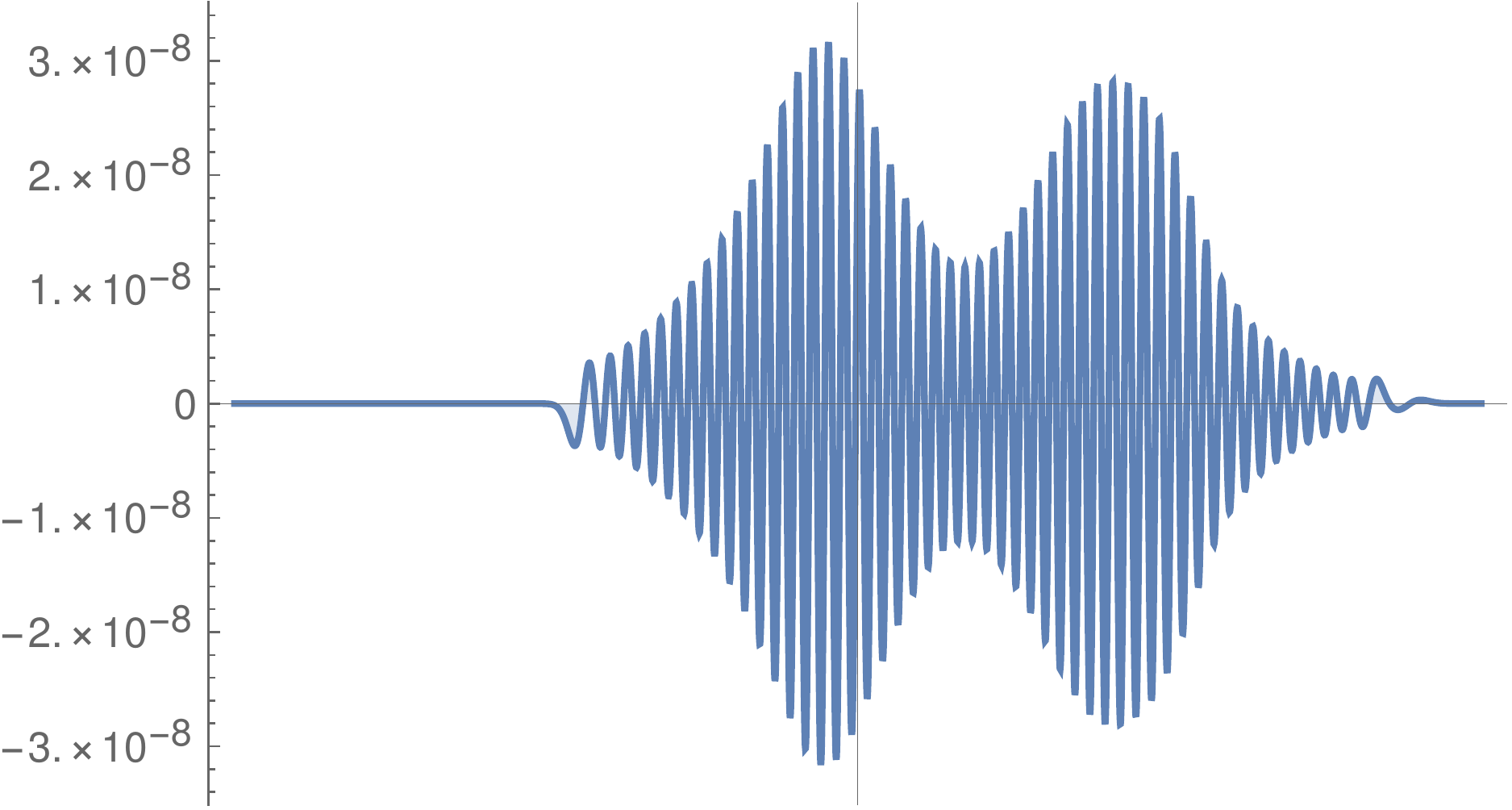}
\par\end{centering}
\caption{\label{fig:The-distribution-}The distribution (\ref{eq:true distribution})
(top left) and the error of its estimation with $241$ term centered
at an equally spaced grid obtain using KDE in Mathematica$\,^{TM}$
(top right). The error of estimating (\ref{eq:true distribution})
using Algorithm~\ref{alg:Reduction-algorithm-using-Gram} with $102$
terms centered at a selected subset of the \textit{original} data
points (bottom left). We also show the difference between the definition
of the PDF in (\ref{eq:kernel sum}) and its reduced version in (\ref{eq:kernel-approx})
obtained using Algorithm~\ref{alg:Reduction-algorithm-using-Gram}
where accuracy was set to $10^{-7}$ (bottom right).}
\end{figure}

\subsection{\label{subsec:An-example-in-high-dimensions}An example in high dimensions}

We generate $N=10^{5}$ samples from a two dimensional Gaussian distribution
with the PDF
\[
g\left(\mathbf{y}\right)=\frac{1}{2}\left(\frac{1}{2\pi}e^{-\frac{1}{2}\left(\mathbf{y}-\boldsymbol{\mu}_{1}\right)^{T}\boldsymbol{\Sigma}_{1}^{-1}\left(\mathbf{y}-\boldsymbol{\mu}_{1}\right)}+\frac{1}{2\pi}e^{-\frac{1}{2}\left(\mathbf{y}-\boldsymbol{\mu}_{2}\right)^{T}\boldsymbol{\Sigma}_{2}^{-1}\left(\mathbf{y}-\boldsymbol{\mu}_{2}\right)}\right)
\]
where $\mathbf{y}=\left(y_{1},y_{2}\right)$, $\boldsymbol{\mu}_{1}=\left(0,0\right)$,
$\boldsymbol{\mu}_{2}=\left(3,3\right)$ and
\[
\boldsymbol{\Sigma}_{1}=\begin{pmatrix}2 & 0\\
0 & 0.5
\end{pmatrix},\ \ \ \boldsymbol{\Sigma}_{2}=\begin{pmatrix}1 & 0\\
0 & 1
\end{pmatrix}.
\]
We pad these samples with zeros so that they belong to a $d$-dimensional
space and denote them by $\left\{ \mathbf{y}_{i}\right\} _{i=1}^{N}$.
We then apply a random rotation matrix $U$ to obtain the test data
$\left\{ \mathbf{x}_{i}\right\} _{i=1}^{N}$. As the bandwidth parameter,
we set
\[
h=\left(\frac{4}{2d+1}\right)^{\frac{1}{d+4}}N^{-\frac{1}{d+4}},
\]
a value that minimizes the mean integrated square error for an underlying
standard normal distribution (see e.g.\ \cite{SILVER:1986}). In
our case, since the intrinsic dimension of the test data is $2$,
we set $h=0.14142135623730950488$. The initial kernel density estimator
using all test data is
\begin{equation}
f\left(\mathbf{x}\right)=\frac{1}{Nh^{d}}\sum_{i=1}^{N}\frac{1}{\left(2\pi\right)^{\frac{d}{2}}}e^{-\frac{1}{2}\left(\mathbf{x}-\mathbf{x}_{i}\right)^{T}\left(h^{2}\boldsymbol{I}_{d}\right)^{-1}\left(\mathbf{x}-\mathbf{x}_{i}\right)},\label{eq:initial_KDE}
\end{equation}
where $\boldsymbol{I}_{d}$ is the $d$-by-$d$ identity matrix. We
then reduce the number of terms in $\eqref{eq:initial_KDE}$ using
Algorithm~\ref{alg:Reduction-algorithm-using-Gram} and obtain a
sum of Gaussians with fewer terms,
\[
\widetilde{f}\left(\mathbf{x}\right)=\sum_{j=1}^{\widetilde{N}}c_{j}\frac{1}{\left(2\pi\right)^{\frac{d}{2}}}e^{-\frac{1}{2}\left(\mathbf{x}-\mathbf{x}_{j}\right)^{T}\left(h^{2}\boldsymbol{I}_{d}\right)^{-1}\left(\mathbf{x}-\mathbf{x}_{j}\right)}.
\]
In our experiments, we choose dimensions $d=2,\dots16$ and error
threshold $\epsilon=10^{-1}$ in Algorithm~\ref{alg:Reduction-algorithm-using-Gram}.
The number of terms after reduction is about $2000$ for all dimensions
and the approximation error is about $1.2\times10^{-3}$ (see Figure~\ref{fig:kde_16d}).

In order to compare our approximation with the true PDF $g\left(\mathbf{x}\right)$,
we consider a point $\mathbf{y}=\left(y_{1},y_{2},0,\dots,0\right)\in\mathbb{R}^{d}$
such that, under the rotation $U$, we have $\mathbf{x}=U\mathbf{y}$.
Evaluating (\ref{eq:initial_KDE}) at $\mathbf{x}$, we obtain
\begin{eqnarray*}
f\left(\mathbf{x}\right) & = & f\left(U\mathbf{y}\right)\\
 & = & \frac{1}{Nh^{d}}\sum_{i=1}^{N}\frac{1}{\left(2\pi\right)^{\frac{d}{2}}}e^{-\frac{1}{2}\left(U\left(\mathbf{y}-\mathbf{y}_{i}\right)\right)^{T}\left(h^{2}\boldsymbol{I}_{d}\right)^{-1}\left(U\left(\mathbf{y}-\mathbf{y}_{i}\right)\right)}\\
 & = & \frac{1}{Nh^{d}}\sum_{i=1}^{N}\frac{1}{\left(2\pi\right)^{\frac{d}{2}}}e^{-\frac{1}{2}\left(\mathbf{y}-\mathbf{y}_{i}\right)^{T}\left(h^{2}\boldsymbol{I}_{d}\right)^{-1}\left(\mathbf{y}-\mathbf{y}_{i}\right)}\\
 & = & c_{d}\frac{1}{Nh^{2}}\sum_{i=1}^{N}\frac{1}{2\pi}e^{-\frac{\left(y_{1}-y_{1}^{\left(i\right)}\right)^{2}+\left(y_{2}-y_{2}^{\left(i\right)}\right)^{2}}{2h^{2}}},
\end{eqnarray*}
where $c_{d}=h^{2-d}\left(2\pi\right)^{\frac{2-d}{2}}$ and $y_{j}^{\left(i\right)}$
denote the $j$-th component of $\mathbf{y}_{i}$. Notice that the
kernel density estimator for the PDF of the original distribution
in two dimensions is 
\[
g\left(y_{1},y_{2}\right)\approx\frac{1}{Nh^{2}}\sum_{i=1}^{N}\frac{1}{2\pi}e^{-\frac{\left(y_{1}-y_{1}^{\left(i\right)}\right)^{2}+\left(y_{2}-y_{2}^{\left(i\right)}\right)^{2}}{2h^{2}}}=\frac{1}{c_{d}}f\left(\mathbf{x}\right)
\]
Therefore, to estimate the size of the approximation error, we generate
a set of $64\times64$ equispaced grid points $\left(\bar{y}_{1}^{\left(i\right)},\bar{y}_{2}^{\left(j\right)}\right)$,
$i,j=1,\dots,64$ in $\left[-8,8\right]\times\left[-8,8\right]$.
We pad each $\left(\bar{y}_{1}^{\left(i\right)},\bar{y}_{2}^{\left(j\right)}\right)$
with $d-2$ zeros to embed it into $\mathbb{R}^{d}$, which we denote
as $\bar{\mathbf{y}}_{ij}=\left(\bar{y}_{1}^{\left(i\right)},\bar{y}_{2}^{\left(j\right)},0,\dots,0\right)\in\mathbb{R}^{d}$.
We then apply the same rotation matrix $U$ to obtain a set of points
$\mathbf{\bar{x}}_{ij}=U\bar{\mathbf{y}}_{ij}\in\mathbb{R}^{d}$.
In Figure~\ref{fig:kde_16d} we illustrate the result for dimension
$d=16$; we show the PDF $g$, the errors between $g$ and the KDE
estimate $f$ before reduction and the estimate $\widetilde{f}$ after
reduction, as well as the difference between $f$ and $\widetilde{f}$
.

\begin{figure}[h]
\begin{centering}
\includegraphics[scale=0.3]{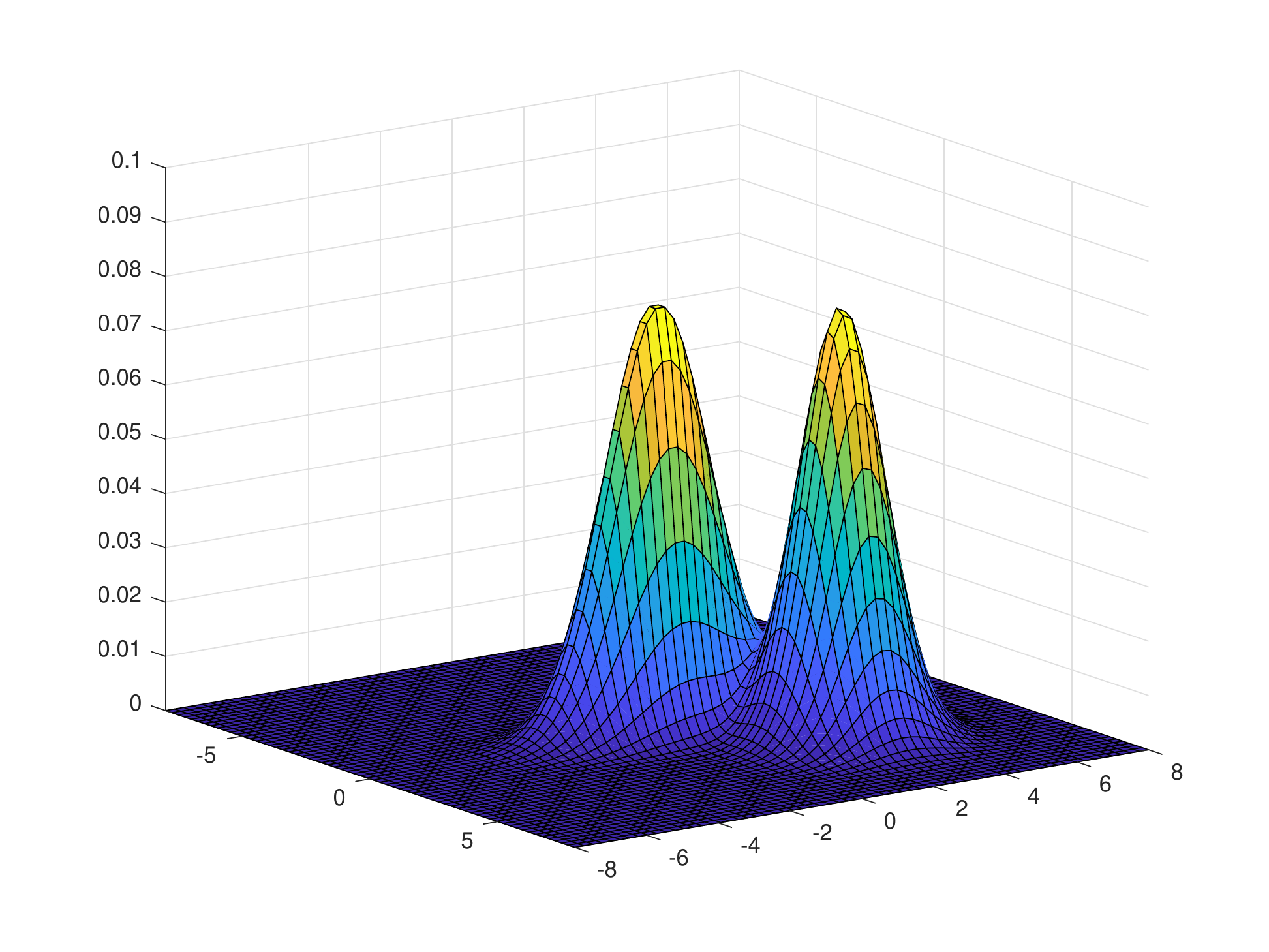}\includegraphics[scale=0.3]{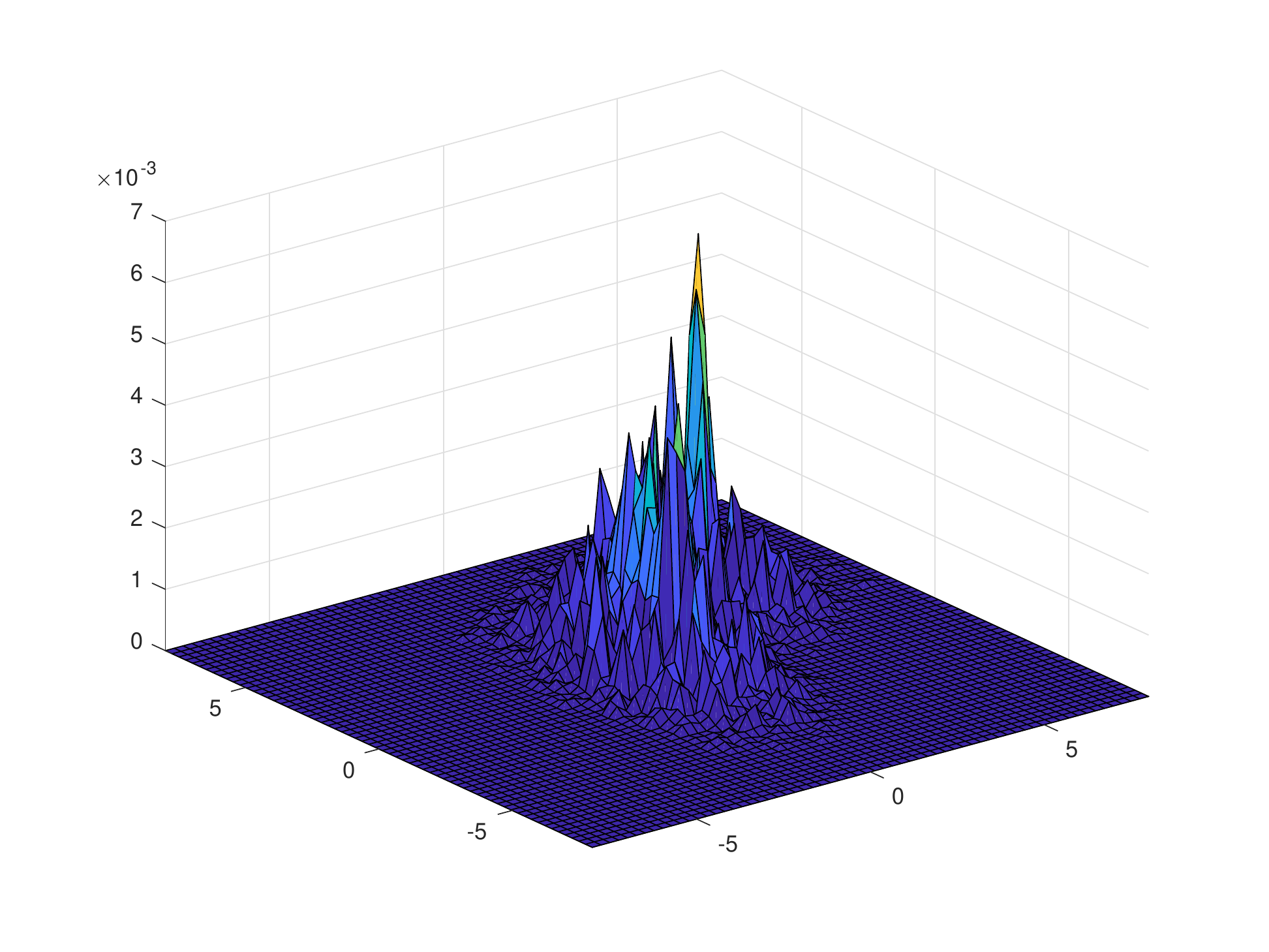}
\par\end{centering}
\begin{centering}
\includegraphics[scale=0.3]{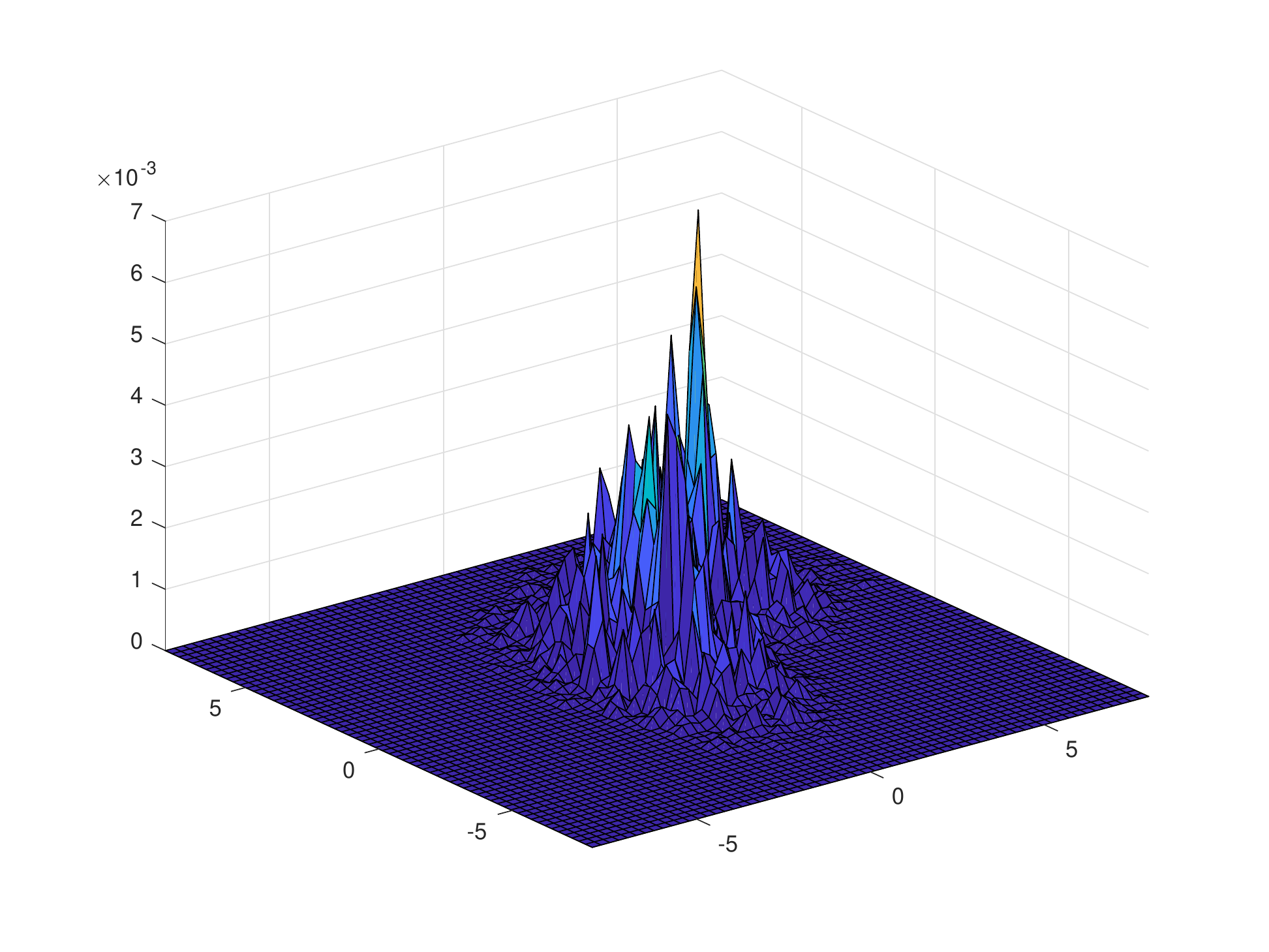}\includegraphics[scale=0.3]{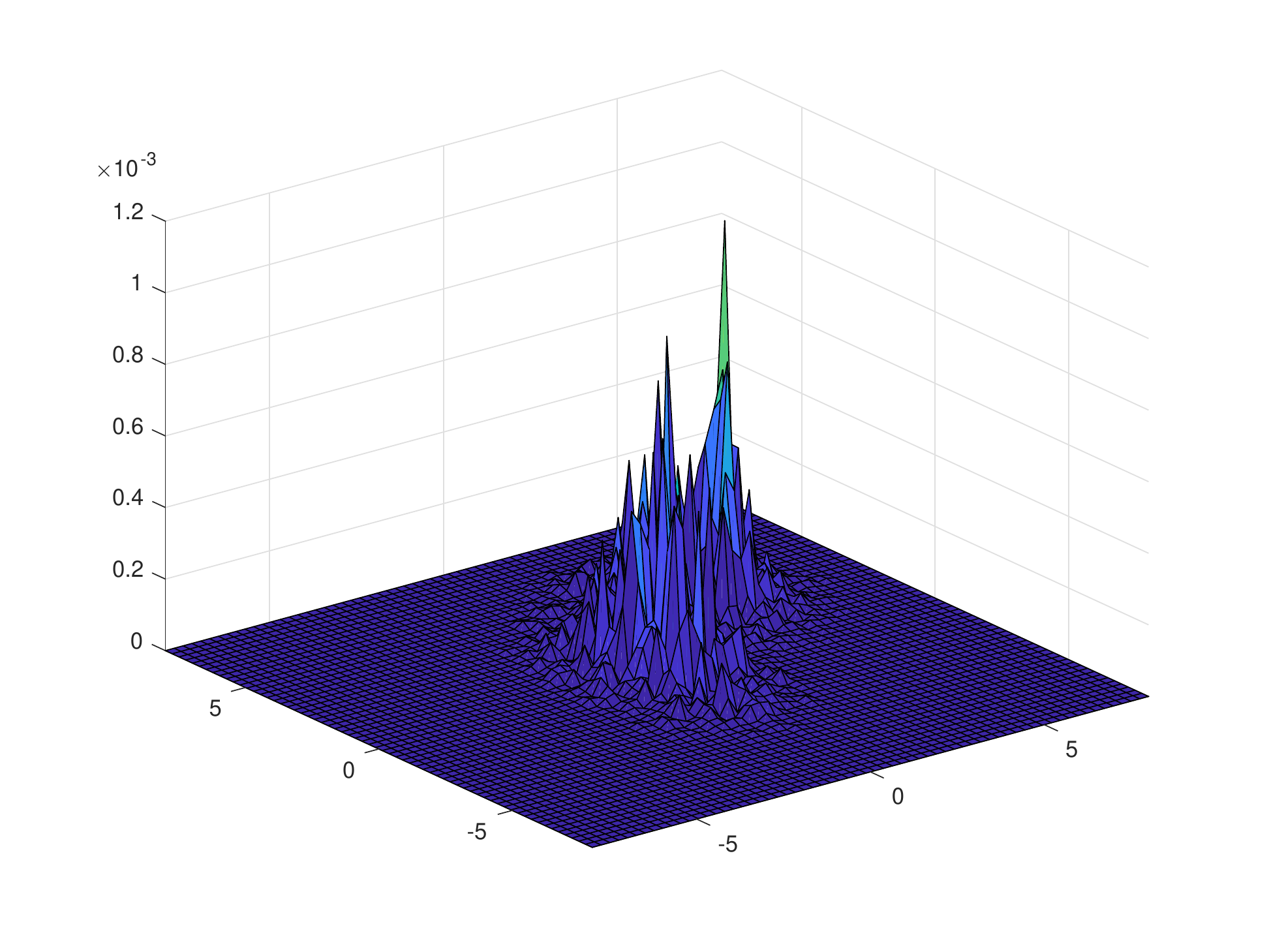}
\par\end{centering}
\caption{\label{fig:kde_16d}Using the grid $\left(\bar{y}_{1}^{\left(i\right)},\bar{y}_{2}^{\left(j\right)}\right)$,
$i,j=1,\dots64$, and the rescaling constant $c_{d}=h^{2-d}\left(2\pi\right)^{\frac{2-d}{2}}$,
$d=16$, we display the PDF $g\left(\bar{y}_{1}^{\left(i\right)},\bar{y}_{2}^{\left(j\right)}\right)$
(top left) and the difference $\left|g\left(y_{1}^{\left(i\right)},y_{2}^{\left(j\right)}\right)-\frac{1}{c_{d}}f\left(U\bar{\mathbf{y}}_{ij}\right)\right|$
between $g$ and constructed PDF $f$ (top right). We also show the
difference $\left|g\left(\bar{y}_{1}^{\left(i\right)},\bar{y}_{2}^{\left(j\right)}\right)-\frac{1}{c_{d}}\widetilde{f}\left(U\bar{\mathbf{y}}_{ij}\right)\right|$
between $g$ and $\widetilde{f}$ (bottom left) and the difference
$\frac{1}{c_{d}}\left|f\left(U\bar{\mathbf{y}}_{ij}\right)-\widetilde{f}\left(U\bar{\mathbf{y}}_{ij}\right)\right|$
between $f$ and $\widetilde{f}$ (bottom right) . }
\end{figure}

\section{\label{sec:Far-field-summation-in}Far-field summation in high dimensions}

Given a large set of points in dimension $d\gg3$ with pairwise interaction
via a non-oscillatory kernel, our approach provides a deterministic
algorithm for fast summation in the far-field setup (i.e.\ where
two groups of points are separated). Recently a randomized algebraic
approach in a similar setup was suggested in \cite{MAR-BIR:2017}.
Instead, we use Algorithm~\ref{alg:Reduction-algorithm-using-Gram}
as a tool to rapidly evaluate

\[
g_{m}=\sum_{n=1}^{N}f_{n}K\left(\mathbf{x}_{m},\mathbf{y}_{n}\right),\,\,\,m=1,\dots,M,
\]
where $M$ and $N$ are large and $K\left(\mathbf{x},\mathbf{y}\right)$
is a non-oscillatory kernel with a possible singularity at $\mathbf{x}=\mathbf{y}$
(recall that kernels of mathematical physics typically have a singularity
for coincident arguments \textbf{$\mathbf{x}$} and $\mathbf{y}$).

Let us consider sources $\left\{ \left(\mathbf{y}_{n},f_{n}\right)\right\} _{n=1}^{N}$
and targets $\left\{ \left(\mathbf{x}_{m},g_{m}\right)\right\} _{m=1}^{M}$
occupying two distinct $d$-dimensional balls. Specifically, we assume
that $\left\Vert \mathbf{y}_{n}-\mathbf{y}_{c}\right\Vert \leq r_{s}$
and $\left\Vert \mathbf{x}_{m}-\mathbf{x}_{c}\right\Vert \leq r_{t}$,
where $\boldsymbol{y}_{c}$, $\boldsymbol{x}_{c}$ and $r_{s}$, $r_{t}$
are the centers and the radii of the balls. We also assume that sources
and targets are separated, i.e., $0<r\le\left\Vert \mathbf{x}_{m}-\mathbf{y}_{n}\right\Vert \le R$.
The separation of sources and targets implies that in the evaluation
of the kernel we are never close to a possible singularity of the
kernel at $\mathbf{x}=\mathbf{y}$. Separation of sources and targets
allows us to use a non-singular approximation of the kernel and reduce
the problem to finding the best linearly independent subsets of sources
(or targets) as defined by such approximate kernel. We also assume
that the sources are located on a low-dimensional manifold embedded
in high-dimensional space (see Remark~\ref{rem:If-sources-are-chosen}
below).

In order to use Algorithm~\ref{alg:Reduction-algorithm-using-Gram},
we need to define an inner product that takes into account the assumption
of separation of sources and targets. We illustrate this using the
example of the Poisson kernel (\ref{eq:poissongreenfun}), $K\left(\mathbf{x},\mathbf{y}\right)=\left\Vert \mathbf{x}-\mathbf{y}\right\Vert ^{-d+2}$
(without standard normalization). Approximating $K\left(\mathbf{x},\mathbf{y}\right)$
as in Section~\ref{subsec:Poisson-equation-in} (c.f. (\ref{eq:ApproxViaGaussians})),
we have
\begin{equation}
\left|K\left(\mathbf{x},\mathbf{y}\right)-\sum_{l\in\mathbb{Z}}w_{l}e^{-\tau_{l}\left\Vert \mathbf{x}-\mathbf{y}\right\Vert ^{2}}\right|\le\epsilon K\left(\mathbf{x},\mathbf{y}\right).\label{eq:kernel approximation}
\end{equation}
Since sources and targets are separated,we drop terms in (\ref{eq:kernel approximation})
with sufficiently large exponents (they produce a negligible contribution
in the interval $\left[r,R\right]$) as well as replace terms with
small exponents using an algorithm described in \cite{BEY-MON:2010}.
As a result, we obtain
\[
\widetilde{K}\left(\mathbf{x},\mathbf{y}\right)=\sum_{l=L_{0}}^{L_{1}}w_{l}e^{-\tau_{l}\left\Vert \mathbf{x}-\mathbf{y}\right\Vert ^{2}},
\]
such that
\[
\left|K\left(\mathbf{x},\mathbf{y}\right)-\widetilde{K}\left(\mathbf{x},\mathbf{y}\right)\right|<\widetilde{\epsilon},\,\,\,\mbox{for}\,\,\,r\leq\left\Vert \mathbf{x}-\mathbf{y}\right\Vert \leq R,
\]
where $\widetilde{\epsilon}$ is slightly larger than $\epsilon$.
It follows that
\[
\left|g_{m}-\tilde{g}_{m}\right|\le\widetilde{\epsilon},
\]
where
\[
\widetilde{g}_{m}=\sum_{n=1}^{N}f_{n}\widetilde{K}\left(\mathbf{x}_{m},\mathbf{y}_{n}\right),\,\,\,m=1,\dots,M.
\]
Since \textbf{$\left\Vert \mathbf{x}-\mathbf{x}_{c}\right\Vert \leq r_{t}$
}implies\textbf{ $r\leq\left\Vert \mathbf{x}-\mathbf{y}_{n}\right\Vert \leq R$,}
we define the inner product as an integral over the ball $\left\Vert \mathbf{x}-\mathbf{x}_{c}\right\Vert \leq r_{t}$,
\begin{eqnarray}
\langle\widetilde{K}\left(\mathbf{\cdot},\mathbf{y}_{n}\right),\widetilde{K}\left(\mathbf{\cdot},\mathbf{y}_{n'}\right)\rangle_{d} & = & \int_{\left\Vert \boldsymbol{x}-\boldsymbol{x}_{c}\right\Vert \leq r_{t}}\widetilde{K}\left(\mathbf{x},\mathbf{y}_{n}\right)\widetilde{K}\left(\mathbf{x},\mathbf{y}_{n'}\right)d\mathbf{x}.\label{eq:inner product on disk}
\end{eqnarray}
The inner product (\ref{eq:inner product on disk}) can be reduced
to a one dimensional integral, as we show next.
\begin{eqnarray*}
 &  & \langle\widetilde{K}\left(\mathbf{\cdot},\mathbf{y}_{n}\right),\widetilde{K}\left(\mathbf{\cdot},\mathbf{y}_{n'}\right)\rangle_{d}\\
 & = & \sum_{l,l'=L_{0}}^{L_{1}}w_{l}w_{l'}\int_{\left\Vert \mathbf{x}-\mathbf{x}_{c}\right\Vert \leq r_{t}}e^{-\tau_{l}\left\Vert \mathbf{x}-\mathbf{y}_{n}\right\Vert ^{2}}e^{-\tau_{l'}\left\Vert \mathbf{x}-\mathbf{y}_{n'}\right\Vert ^{2}}d\mathbf{x}\\
 & = & \sum_{l,l'=L_{0}}^{L_{1}}w_{l}w_{l'}e^{-\frac{\tau_{l}\tau_{l'}}{\tau_{l}+\tau_{l'}}\left\Vert \mathbf{y}_{n}-\mathbf{y}_{n'}\right\Vert ^{2}}\int_{\left\Vert \mathbf{x}-\mathbf{x}_{c}\right\Vert \leq r_{t}}e^{-\left(\tau_{l}+\tau_{l'}\right)\left\Vert \mathbf{x}-\frac{\tau_{l}\mathbf{y}_{n}+\tau_{l'}\mathbf{y}_{n'}}{\tau_{l}+\tau_{l'}}\right\Vert ^{2}}d\mathbf{x}\\
 & = & \sum_{l,l'=L_{0}}^{L_{1}}\widetilde{w}_{ll'}^{nn'}I_{ll'}^{nn'},
\end{eqnarray*}
where 
\[
\widetilde{w}_{ll'}^{nn'}=w_{l}w_{l'}e^{-\frac{\tau_{l}\tau_{l'}}{\tau_{l}+\tau_{l'}}\left\Vert \mathbf{y}_{n}-\mathbf{y}_{n'}\right\Vert ^{2}},\ \ \ \widetilde{\tau}_{ll'}=\tau_{l}+\tau_{l'},\ \ \ \widetilde{\mathbf{y}}_{ll'}^{nn'}=\frac{\tau_{l}\mathbf{y}_{n}+\tau_{l'}\mathbf{y}_{n'}}{\tau_{l}+\tau_{l'}},
\]
and 
\[
I_{ll'}^{nn'}=\int_{\left\Vert \mathbf{x}-\mathbf{x}_{c}\right\Vert \leq r_{t}}e^{-\widetilde{\tau}_{ll'}\left\Vert \mathbf{x}-\widetilde{\mathbf{y}}_{ll'}^{nn'}\right\Vert ^{2}}d\mathbf{x}.
\]
Using spherical coordinates in dimensions $d\ge3$, we obtain
\begin{eqnarray}
I_{ll'}^{nn'} & = & \int_{\left\Vert \mathbf{x}-\mathbf{x}_{c}\right\Vert \leq r_{t}}e^{-\widetilde{\tau}_{ll'}\left\Vert \mathbf{x}-\widetilde{\mathbf{y}}_{ll'}^{nn'}\right\Vert ^{2}}d\mathbf{x}\nonumber \\
 & = & e^{-\widetilde{\tau}_{ll'}\left\Vert \mathbf{x}_{c}-\widetilde{\mathbf{y}}_{ll'}^{nn'}\right\Vert ^{2}}\int_{\left\Vert \mathbf{z}\right\Vert \leq r_{t}}e^{-\widetilde{\tau}_{ll'}\left\Vert \mathbf{z}\right\Vert ^{2}}e^{-2\widetilde{\tau}_{ll'}\left\langle \mathbf{z},\mathbf{x}_{c}-\widetilde{\mathbf{y}}_{ll'}^{nn'}\right\rangle }d\mathbf{z}\nonumber \\
 & = & e^{-\widetilde{\tau}_{ll'}\left\Vert \mathbf{x}_{c}-\widetilde{\mathbf{y}}_{ll'}^{nn'}\right\Vert ^{2}}\Omega_{d-2}\int_{0}^{r_{t}}\left(\int_{0}^{\pi}e^{-2\widetilde{\tau}_{ll'}r\left\Vert \mathbf{x}_{c}-\widetilde{\mathbf{y}}_{ll'}^{nn'}\right\Vert \cos\theta}\sin^{d-2}\theta d\theta\right)e^{-\widetilde{\tau}_{ll'}r^{2}}r^{d-1}dr\nonumber \\
 & = & e^{-\widetilde{\tau}_{ll'}\left\Vert \mathbf{x}_{c}-\widetilde{\mathbf{y}}_{ll'}^{nn'}\right\Vert ^{2}}\frac{2\pi^{\frac{d}{2}}}{\left(\widetilde{\tau}_{ll'}\left\Vert \mathbf{x}_{c}-\widetilde{\mathbf{y}}_{ll'}^{nn'}\right\Vert \right)^{\frac{d-2}{2}}}\int_{0}^{r_{t}}I_{\frac{d-2}{2}}\left(2\widetilde{\tau}_{ll'}\left\Vert \mathbf{x}_{c}-\widetilde{\mathbf{y}}_{ll'}^{nn'}\right\Vert r\right)e^{-\widetilde{\tau}_{ll'}r^{2}}r^{\frac{d}{2}}dr,\label{eq:integral to compute}
\end{eqnarray}
where $\Omega_{d}$ is the surface area of the $d$-dimensional sphere
embedded in $\left(d+1\right)$-dimensional space, i.e., $\Omega_{d}=\frac{2\pi^{\frac{d+1}{2}}}{\Gamma\left(\frac{d+1}{2}\right)}$
and $I_{d}$ is the $d$-th order modified Bessel function of the
first kind (see \cite[Eq. 9.6.18]{ABR-STE:1964}). While in odd dimensions
$d$ the integrand in (\ref{eq:integral to compute}) can be extended
from $\left[0,r_{t}\right]$ to $\left[-r_{t},r_{t}\right]$ as a
smooth function allowing us to use the trapezoidal rule, such extension
is not available in even dimensions. Because of this, we choose to
use quadratures on $\left[0,r_{t}\right]$ developed in \cite{BEY-MON:2002}
(alternatively, one can use quadratures from \cite{ALPERT:1999}). 
\begin{rem}
It is an important observation that the selection of the inner product
for finding a linearly independent subset of functions is not limited
to the standard one defined in (\ref{eq:inner product on disk}).
Observing that (\ref{eq:inner product on disk}) approaches zero as
$d$ increases, in all of our experiments in dimensions $d=3,\dots128$,
we use (\ref{eq:inner product on disk}) where we set $d=3$. Thus,
the inner product $\langle\cdot,\cdot\rangle_{3}$ no longer corresponds
to the integral between the functions $\widetilde{K}\left(\mathbf{x},\mathbf{y}_{n}\right)$
and $\widetilde{K}\left(\mathbf{x},\mathbf{y}_{n'}\right)$. However,
since we use inner products only to identify the best linearly independent
subset of sources (skeleton sources) and compute the coefficients
to replace the remaining terms as linear combinations of these skeleton
sources, there are many choices of inner products that will produce
similar results. 
\end{rem}

Associating with each $\mathbf{y}_{n}$ the source function $K\left(\mathbf{x},\mathbf{y}_{n}\right)$,
we use Algorithm~\ref{alg:Reduction-algorithm-using-Gram} to find
the skeleton terms (i.e.\ the skeleton sources) with indices $\widehat{I}=\left\{ n_{k}\right\} _{k=1}^{r_{s}}$
allowing us to express the remaining source functions (for $n\notin\widehat{I}$)
as
\[
\left|\widetilde{K}\left(\mathbf{x},\mathbf{y}_{n}\right)-\sum_{k=1}^{r_{s}}\widetilde{f}_{n_{k}}\widetilde{K}\left(\mathbf{x},\mathbf{y}_{n_{k}}\right)\right|\le\epsilon_{1},\,\,\,n\notin\widehat{I}.
\]

\begin{rem}
Using Algorithm~\ref{alg:Reduction-algorithm-using-Gram} to find
the skeleton sources requires $\mathcal{O}\left(r_{s}^{2}\,N+p\left(d\right)r_{s}N\right)$
operations and computing interactions between skeleton sources and
targets requires additional $\mathcal{O}\left(r_{s}\,M\right)$ operations.
Clearly, instead of working with sources, we can work with targets.
If targets are located on a low-dimensional manifold, we can associate
the functions $\left\{ K\left(\mathbf{x}_{m},\mathbf{y}\right)\right\} _{m=1}^{M}$
with targets and use Algorithm~\ref{alg:Reduction-algorithm-using-Gram}
to find the skeleton targets. In such case, the computational cost
becomes $\mathcal{O}\left(r_{t}^{2}M+p\left(d\right)r_{t}M+r_{t}N\right)$,
where $r_{t}$ is the number of skeleton targets. 
\end{rem}

~
\begin{rem}
\label{rem:If-sources-are-chosen}If sources are chosen from a random
distribution in $\mathbb{R}^{d}$ rather than located in a small neighborhood
of a low-dimensional manifold, the expected distance between two sources
$\left\Vert \mathbf{y}_{n}-\mathbf{y}_{n'}\right\Vert $ becomes increasingly
large as the dimension $d$ increases (see e.g.\ comments in \cite[Section 1.5.3]{SCOTT:2015}
and examples in \cite{MAR-BIR:2017}). As a result, the functions
of variable $\mathbf{x}$, $\widetilde{K}\left(\mathbf{x},\mathbf{y}_{n}\right)$
and $\widetilde{K}\left(\mathbf{x},\mathbf{y}_{n'}\right)$, are effectively
linearly independent as $d$ becomes large so that in order to have
compressibility, the sources must have a low intrinsic dimension.
Therefore, the assumption that sources are located in a small neighborhood
of a low-dimensional manifold is not specific to our approach. 
\end{rem}

\subsection{Skeleton sources\label{subsec:kernel sum example}}

We illustrate our approach using sources located on a two-dimensional
manifold embedded in a high-dimensional space. For our example, we
generate points $\left\{ \mathbf{y}_{n}\right\} _{n=1}^{N},\,\,\,\mathbf{y}_{n}\in\mathbb{R}^{d}$
so that the first two coordinates are random variables drawn from
the two-dimensional standard normal distribution and the remaining
coordinates are set to zero. Next we apply a random rotation and rescale
the points so that $\left\Vert \mathbf{y}_{n}\right\Vert \leq1$ for
all $n=1,\dots,N$. For targets, we draw points $\left\{ \mathbf{x}_{m}\right\} _{m=1}^{M}$
from the $d$-dimensional standard normal distribution and rescale
them so that $\left\Vert \mathbf{x}_{m}\right\Vert \leq1$. We then
shift the first component of $\left\{ \mathbf{y}_{n}\right\} _{n=1}^{N}$
by $2$ and that of $\left\{ \mathbf{x}_{m}\right\} _{m=1}^{M}$ by
$-2$ so that sources and targets are well separated. Finally, we
select the coefficients of sources, $\left\{ f_{n}\right\} _{n=1}^{N}$,
from the uniform distribution $\mathcal{U}\left(0,1\right)$. In all
tests we set $N=10^{4}$ and $M=10^{3}$. In Table~\ref{tab:Skeleton-source-functions-standard inner prod}
we report the actual minimal and maximum distances between sources
and targets ($\mbox{dist}_{near}$ and $\mbox{dist}_{far}$), the
number of skeleton sources $r_{s}$, and the relative error of the
approximation, 
\begin{equation}
error=\frac{\max_{m=1,\dots,M}\left|g_{m}-\widetilde{g}_{m}\right|}{\max_{m=1,\dots,M}\left|g_{m}\right|},\label{eq:error for kernel summation}
\end{equation}
for selected dimensions $3\le d\le128$. For dimensions $d=3,4,5$,
we use the Poisson kernel $\left\Vert \mathbf{x}-\mathbf{y}\right\Vert ^{-d+2}$
while for $d\geq8$, we use the kernel $\left\Vert \mathbf{x}-\mathbf{y}\right\Vert ^{-1}$
(the fast decay of $\left\Vert \mathbf{x}-\mathbf{y}\right\Vert ^{-d+2}$
results in a negligible interaction between sources and targets in
our setup).

\begin{table}
\begin{centering}
\begin{tabular}{|c|c|c|c|c|}
\hline 
$d$ & $\mbox{dist}_{near}$ & $\mbox{dist}_{far}$ & $r_{s}$ & $error$\tabularnewline
\hline 
\hline 
$3$ & $2.5310$ & $5.6317$ & $22$ & $2.6573e-07$\tabularnewline
\hline 
$4$ & $2.8922$ & $5.3544$ & $27$ & $1.5657e-07$\tabularnewline
\hline 
$5$ & $3.2019$ & $5.0864$ & $29$ & $3.0440e-08$\tabularnewline
\hline 
$8$ & $3.1261$ & $5.1186$ & $23$ & $3.2212e-08$\tabularnewline
\hline 
$16$ & $3.1881$ & $5.1049$ & $25$ & $2.5774e-08$\tabularnewline
\hline 
$32$ & $3.5261$ & $5.0242$ & $25$ & $1.1001e-08$\tabularnewline
\hline 
$64$ & $3.6577$ & $4.7520$ & $24$ & $1.4346e-09$\tabularnewline
\hline 
$128$ & $3.8410$ & $4.4825$ & $25$ & $1.7492e-09$\tabularnewline
\hline 
\multicolumn{1}{c}{} & \multicolumn{1}{c}{} & \multicolumn{1}{c}{} & \multicolumn{1}{c}{} & \multicolumn{1}{c}{}\tabularnewline
\end{tabular}
\par\end{centering}
\centering{}\caption{\label{tab:Skeleton-source-functions-standard inner prod}Skeleton
sources selected using the inner product in (\ref{eq:inner product on disk})
in dimension $d$. We report the actual minimal and maximum distances
between sources and targets ($\mbox{dist}_{near}$ and $\mbox{dist}_{far}$),
the number of skeleton sources $r_{s}$, and the relative error in
(\ref{eq:error for kernel summation}).}
\end{table}

\subsection{Equivalent sources}

In this example, we consider a similar setting as in Section~\ref{subsec:kernel sum example}
for $d=2,3.$ We want to replace true sources $\left\{ \mathbf{y}_{n}\right\} _{n=1}^{N}$
located inside a ball by equivalent sources on its boundary as to
reproduce their interaction with the targets within a selected accuracy
$\epsilon$. We expect the number of equivalent sources on the boundary
to be significantly smaller than the number of original true sources
so that pairwise interactions with targets can be computed rapidly.
We note that such strategy is used in many numerical algorithms (see
e.g.\ \cite{YI-BI-ZO:2004}) and here we demonstrate that our reduction
algorithm can solve this problem.

We combine an initial set of candidate equivalent sources (note that
their number will be reduced by the procedure) with the true sources
and compute the Cholesky decomposition of their Gram matrix. We use
Algorithm~\ref{alg:Reduction-algorithm-using-Gram} with the inner
product defined in (\ref{eq:inner product on disk}) and modify the
pivoting strategy to first pivot only among the candidate equivalent
sources until we run out of significant pivots (i.e., pivots above
the accuracy $\epsilon$); only then we switch to pivot among the
true sources. Finally, we compute new coefficients in the usual way
(see Algorithm~\ref{alg:Reduction-algorithm-using-Gram}) noting
that, initially, the candidate equivalent sources had zero coefficients.
This approach allows us to (i) obtain the minimal number of equivalent
sources and (ii) remove as many of the true sources as possible (we
do not preclude the possibility of some of the true sources to remain).

To examine the performance of our approach, we draw source and target
points from the $d$-dimensional standard normal distribution (where
$d=2,3$), rescale and translate these points so that targets are
located in a ball of radius $1$ centered at $\mathbf{x}_{c}$ and
sources are located in a ball of radius $0.9$ centered at $\mathbf{y}_{c}$.
We choose $\mathbf{x}_{c}=\left(-2,0\right)$, $\mathbf{y}_{c}=\left(2,0\right)$
for $d=2$ and $\mathbf{x}_{c}=\left(-2,0,0\right),\mathbf{y}_{c}=\left(2,0,0\right)$
for $d=3$ to make sure sources and targets are well separated. Next
we pick locations for the candidate equivalent sources on the surface
of the ball of radius $1$ centered at $\mathbf{y}_{c}$. In dimension
$d=2$, we pick
\[
\mathbf{z}_{k}=\mathbf{y}_{c}+\left(\cos\theta_{k},\sin\theta_{k}\right),\ \ \ k=1,\dots,K,
\]
where the angles $\theta_{k}$ are equally spaced on $\left[0,2\pi\right]$
with step size $\frac{2\pi}{K}$. In dimension $d=3$ we pick
\[
\mathbf{z}_{kl}=\mathbf{y}_{c}+\left(\cos\theta_{k}\sin\phi_{l},\sin\theta_{k}\sin\phi_{l},\cos\phi_{l}\right),\ \ \ k=1,\dots,K,\ l=1,\dots,L,
\]
where the angles $\theta_{k}$ are equally spaced on $\left[0,2\pi\right]$
with step size $\frac{2\pi}{K}$ and the angles $\phi_{l}$ are the
Gauss-Legendre nodes on $\left[0,\pi\right]$. In our experiments
we choose a relatively small number of true sources and targets ($N,M=1000$)
so that the result can be clearly visualized (see Figure~\ref{fig:equivalent sources d=00003D2}
and \ref{fig:equivalent sources d=00003D3}). Note that the number
of sources can be significantly higher since the algorithm is linear
in this parameter. We demonstrate the results in Figure~\ref{fig:equivalent sources d=00003D2}
and \ref{fig:equivalent sources d=00003D3}, where we display the
original sources and targets and indicate both, candidate equivalent
sources and selected equivalent sources obtained by Algorithm~\ref{alg:Reduction-algorithm-using-Gram}.

\begin{figure}[h]
\noindent\begin{minipage}[t]{1\columnwidth}%
\begin{center}
\includegraphics[scale=0.5]{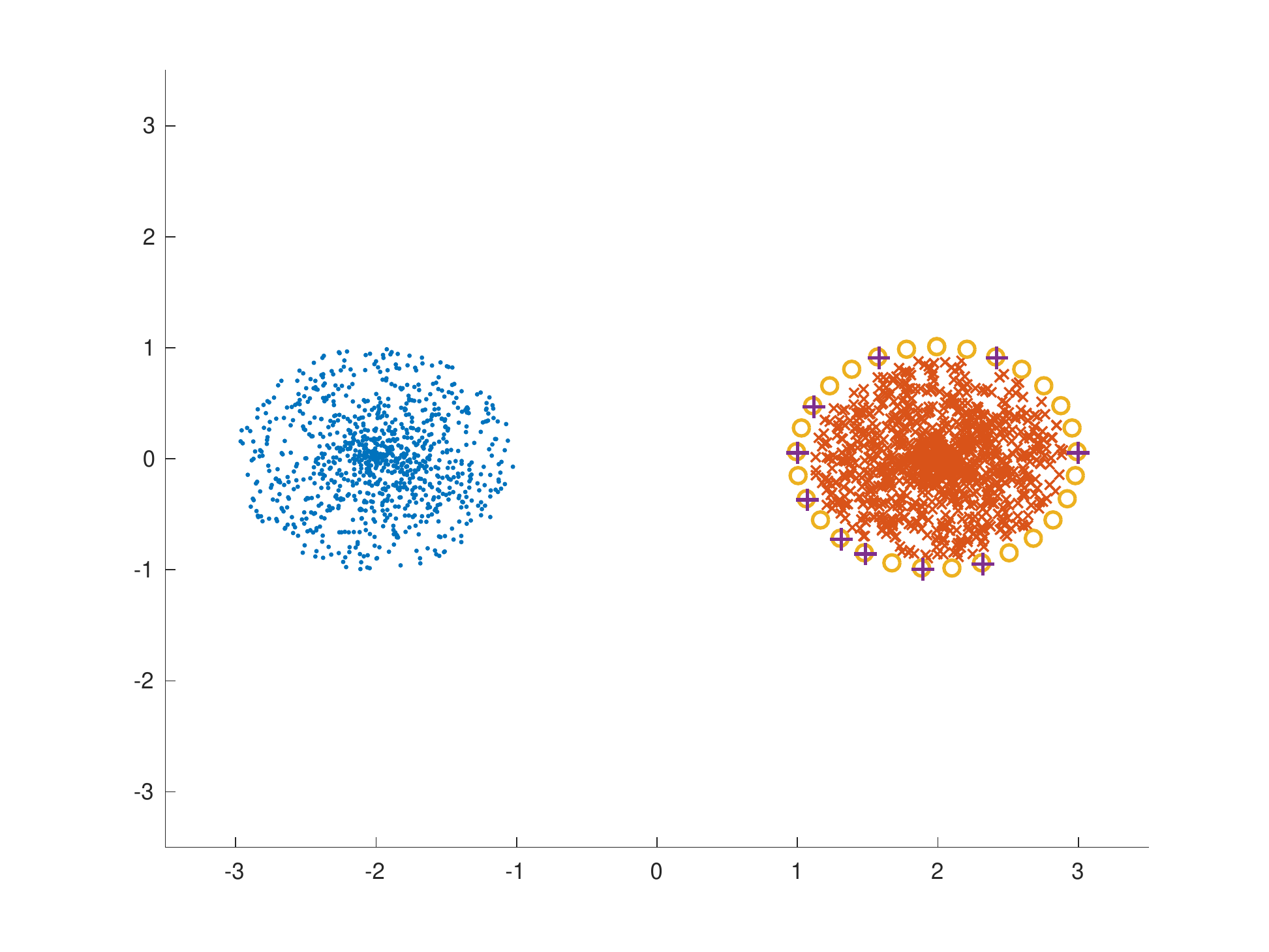}\caption{\label{fig:equivalent sources d=00003D2}Example in dimension $d=2$.
We display $M=1000$ targets (marked with a dot on the left), $N=1000$
sources (marked with an ``x'' on the right), and $K=30$ candidate
sources (marked with a circle on the right). Algorithm~\ref{alg:Reduction-algorithm-using-Gram}
selects $10$ equivalent sources from the $30$ candidate sources
(marked with a $+$). The relative approximation error in (\ref{eq:error for kernel summation})
is $1.3e-07$.}
\par\end{center}%
\end{minipage}\vfill{}
\noindent\begin{minipage}[t]{1\columnwidth}%
\begin{center}
\includegraphics[scale=0.5]{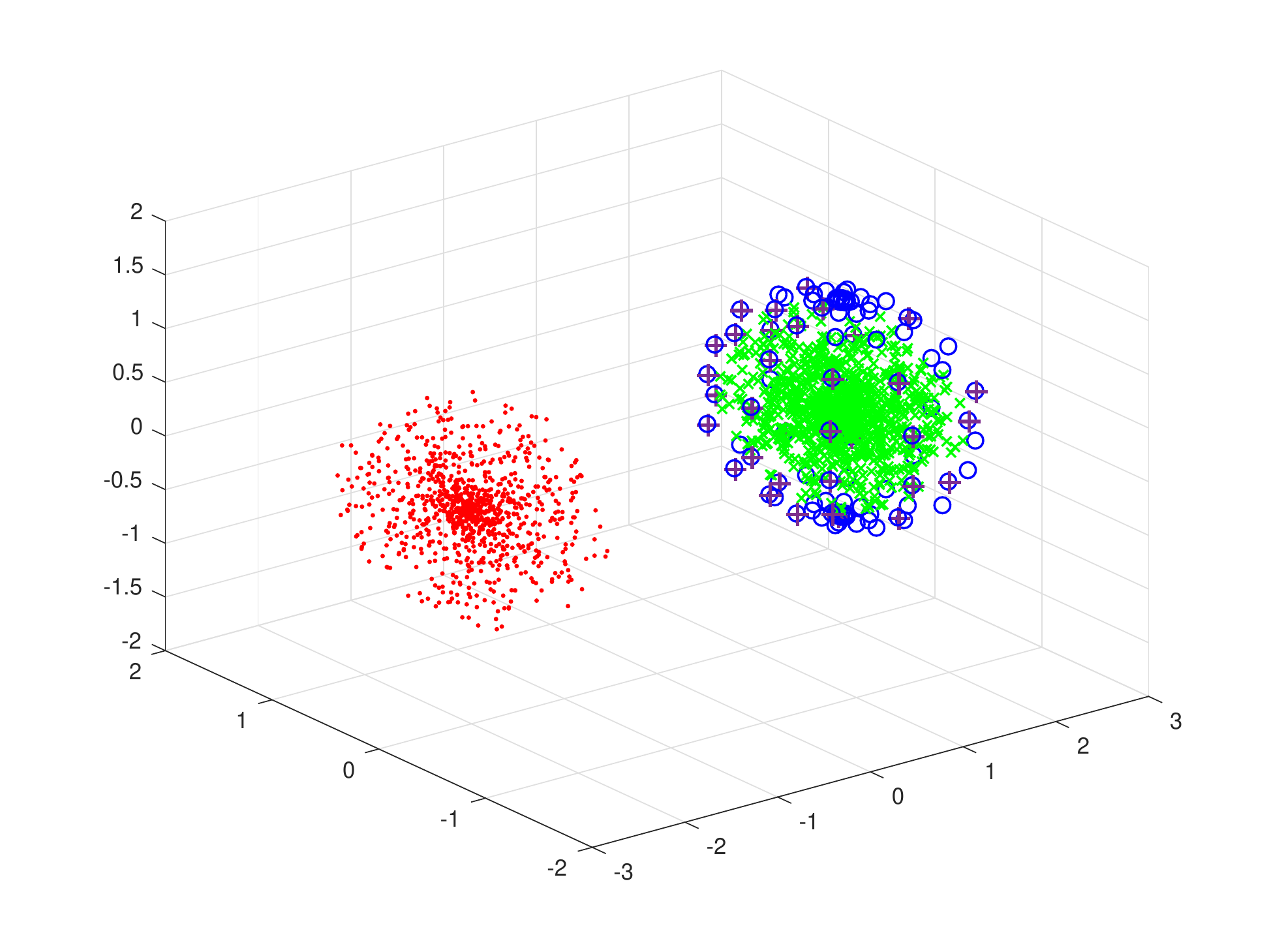}\caption{\label{fig:equivalent sources d=00003D3}Example in dimension $d=3$.
We display $M=1000$ targets (marked with a dot on the left), $N=1000$
sources (marked with an ``x'' on the right), and $K\times L=10\times10$
candidate sources (marked with a circle on the right). Algorithm~\ref{alg:Reduction-algorithm-using-Gram}
selects $35$ equivalent sources from the $100$ candidate sources
(marked with a $+$). The relative approximation error in (\ref{eq:error for kernel summation})
is $8.7e-08$.}
\par\end{center}%
\end{minipage}
\end{figure}

\subsection{Partitioning of points into groups}

Reduction Algorithm~\ref{alg:Reduction-algorithm-using-Gram} can
be used to subdivide scattered points into groups. Indeed, if a set
of points (seeds) are specified beforehand then, like in Voronoi decomposition,
all points can be split into groups by their proximity to the seeds,
i.e.\ a point belongs to a group associated with a given seed if
it is the closest to it among all seeds. There are several algorithms,
e.g.\ Lloyd's algorithm \cite{LLOYD:1982}, that use such seeds in
an iterative procedure to optimize some properties of the sought subdivision.
The question then becomes how to choose such seeds. In order to avoid
poor clusterings the so-called k-means++ algorithm is often used \cite{ART-VAS:2007}.
We would like to point out that Algorithm~\ref{alg:Reduction-algorithm-using-Gram}
can be used to generate initial seeds using linear dependence (which
is a proxy for distances between points). We only illustrate its potential
use for selecting seeds and provide no comparison with k-means++ or
spectral clustering algorithms (see e.g.\ \cite{NG-JO-WE:2002} and
references therein). We also do not provide a comparison with model-based
clustering (see e.g.\ \cite{LI-RA-LI:2007,MCNICH:2016}). Using Algorithm~\ref{alg:Reduction-algorithm-using-Gram}
to subdivide scattered points into groups requires further analysis
and we plan to address it elsewhere.

For our experiment, we use the same set of points as in Section~\ref{subsec:An-example-in-high-dimensions}
and associate with each point a Gaussian centered at that point. The
scale parameter of the Gaussians can be selected sufficiently large
(so that the Gaussian is sufficiently flat) to cover the whole set
of points. We can then use Algorithm~\ref{alg:Reduction-algorithm-using-Gram}
to select the seeds. In order to force Algorithm~\ref{alg:Reduction-algorithm-using-Gram}
to select a specific first point, we introduce an additional point
as the mean
\[
\overline{\mathbf{x}}=\frac{1}{N}\sum_{i=1}^{N}\mathbf{x}_{i},
\]
and associate it with an additional Gaussian which we place at the
beginning of the list of Gaussian atoms.

The seeds are the first significant pivots produced by the algorithm
and our choice of their number depends on the goals of the subdivision.
By its nature, Algorithm~\ref{alg:Reduction-algorithm-using-Gram}
tends to push these seeds far away from each other. We observe that
groups with a small number of points appear to contain outliers (see
Figure~\ref{fig:Subdivision-of-points}), so that the resulting subdivision
can be helpful in identifying them. Since the computational cost of
Algorithm~\ref{alg:Reduction-algorithm-using-Gram} is $\mathcal{O}\left(r^{2}N+p\left(d\right)r\,N\right)$,
where $N$ is the original number of points and $r$ is the number
of seeds, as long as the number of groups we are seeking is small,
this algorithm is essentially linear. We note that we can subdivide
the resulting groups further and, in a hierarchical fashion, build
a tree structure. In this paper we simply illustrate the use of Algorithm~\ref{alg:Reduction-algorithm-using-Gram}
for subdivision of points into groups and plan to develop applications
of this approach elsewhere. 

For the example in Figure~\ref{fig:Subdivision-of-points} we use
the two dimensional distribution of points described in Section~\ref{subsec:An-example-in-high-dimensions}.
We choose the bandwidth parameter $h=200$ when selecting 4 seeds
and $h=16$ when selecting 10 seeds in order to obtain the corresponding
subdivisions of the set. Observe that outliers tend to be associated
with linearly independent terms and, thus, form a subset with a small
number of points. 

\begin{figure}[h]
\begin{centering}
\includegraphics[scale=0.7]{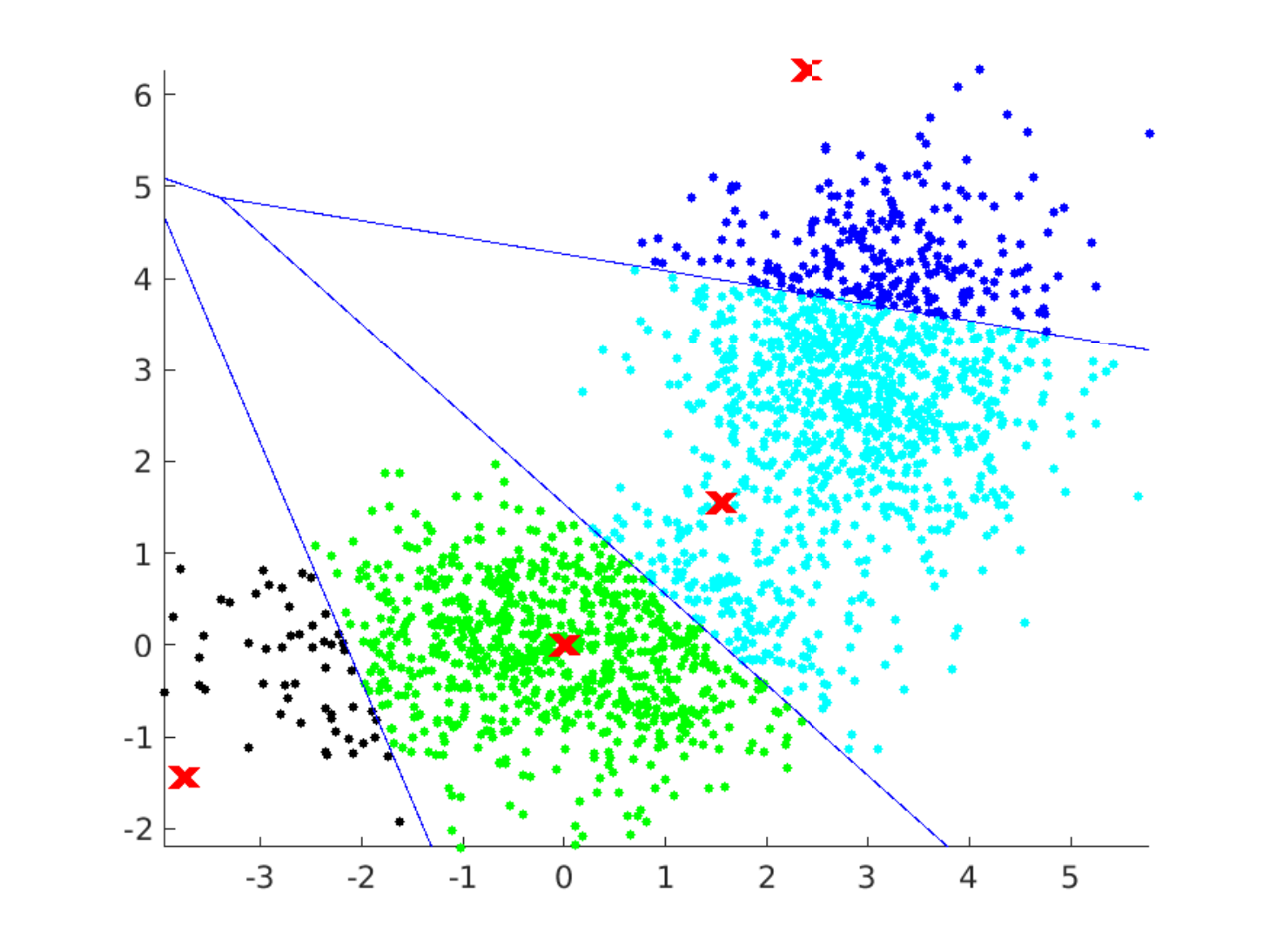}
\par\end{centering}
\begin{centering}
\includegraphics[scale=0.7]{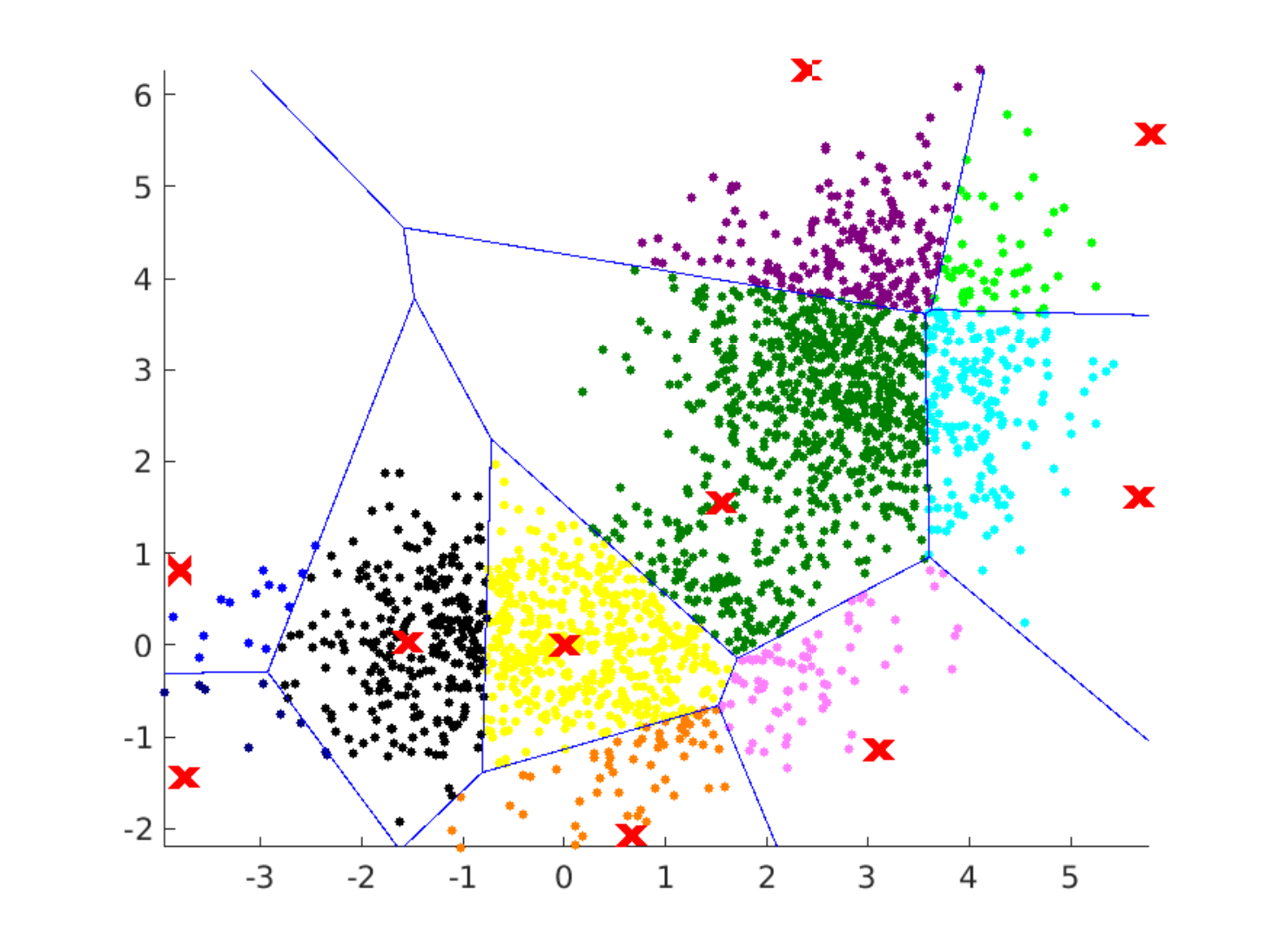}
\par\end{centering}
\caption{\label{fig:Subdivision-of-points}Subdivision of $2,000$ points into
groups using seeds (marked by ``x'') produced by Algorithm~\ref{alg:Reduction-algorithm-using-Gram}.
Illustrated are subdivisions into four groups (top) and into ten groups
(bottom). Note that groups with a small number of points are likely
to contain outliers.}
\end{figure}

\section{\label{sec:Conclusions-and-further}Conclusions and further work}

We presented fast algorithms for reducing the number of terms in non-separated
multivariate mixtures, analyzed them, and demonstrated their performance
on a number of examples. These reduction algorithms allow us to work
with non-separated multivariate mixtures which are a far reaching
generalization of multivariate separated representations \cite{BEY-MOH:2002,BEY-MOH:2005,BE-GA-MO:2009}
and can be used as a tool for solving a variety of multidimensional
problems. Further work is required to develop new numerical methods
that use non-separated multivariate mixtures in applications, for
example in quantum chemistry. We plan to pursue several multivariate
problems with the techniques illustrated in this paper. Specifically,
we plan to develop further our approach to KDE and compare it with
existing techniques. We also plan to pursue the problem of hierarchical
subdivision of point clouds into groups and its applications to clustering
and detection of outliers.

\section*{Acknowledgements}

We would like to thank Zydrunas Gimbutas (NIST) and the anonymous
reviewers for their many useful comments and suggestions.

\section{\label{sec:Appendix-A}Appendix A}

As mentioned in the paper, computing with multivariate Gaussian mixtures
is particularly convenient since all common operations result in explicit
integrals. We present below the key identities for multivariate Gaussians
using the standard $L^{1}$ normalization,

\[
N\left(\mathbf{x},\boldsymbol{\mu},\boldsymbol{\Sigma}\right)=\frac{1}{\det\left(2\pi\boldsymbol{\Sigma}\right)^{1/2}}\exp\left(-\frac{1}{2}\left(\mathbf{x}-\boldsymbol{\mu}\right)^{T}\boldsymbol{\Sigma}^{-1}\left(\mathbf{x}-\boldsymbol{\mu}\right)\right).
\]
However, when computing integrals with Gaussians atoms, it is convenient
to normalize them to have unit $L^{2}$-norm. 

\subsubsection{Convolution of two normal distributions}

\begin{equation}
\int_{\mathbb{R}^{d}}N\left(\mathbf{x}-\mathbf{y},\boldsymbol{\mu}_{1},\boldsymbol{\Sigma}_{1}\right)N\left(\mathbf{y},\boldsymbol{\mu}_{2},\boldsymbol{\Sigma}_{2}\right)d\mathbf{y}=N\left(\mathbf{x},\boldsymbol{\mu}_{1}+\boldsymbol{\mu}_{2},\boldsymbol{\Sigma}_{1}+\boldsymbol{\Sigma}_{2}\right).\label{eq:Convolution of two normal distributions}
\end{equation}

\subsubsection{Sum of two quadratic forms }

Consider vectors $\mathbf{x}$, $\mathbf{a}$, and $\mathbf{b}$ and
two symmetric positive definite matrices $\mathbf{A}$ and $\mathbf{B}$.
We have 
\[
\left(\mathbf{x}-\mathbf{a}\right)^{T}\mathbf{A}\left(\mathbf{x}-\mathbf{a}\right)+\left(\mathbf{x}-\mathbf{b}\right)^{T}\mathbf{B}\left(\mathbf{x}-\mathbf{b}\right)=\left(\mathbf{x}-\mathbf{c}\right)^{T}\left(\mathbf{A}+\mathbf{B}\right)\left(\mathbf{x}-\mathbf{c}\right)+\left(\mathbf{a}-\mathbf{b}\right)^{T}\mathbf{C}\left(\mathbf{a}-\mathbf{b}\right),
\]
where
\[
\mathbf{c}=\left(\mathbf{A}+\mathbf{B}\right)^{-1}\left(\mathbf{A}\mathbf{a}+\mathbf{B}\mathbf{b}\right)
\]
and
\[
\mathbf{C}=\mathbf{A}\left(\mathbf{A}+\mathbf{B}\right)^{-1}\mathbf{\mathbf{B}}=\left(\mathbf{A}^{-1}+\mathbf{\mathbf{B}}^{-1}\right)^{-1}.
\]

\subsubsection{Product of two normal distributions}

We have

\begin{equation}
N\left(\mathbf{x},\boldsymbol{\mu}_{1},\boldsymbol{\Sigma}_{1}\right)N\left(\mathbf{x},\boldsymbol{\mu}_{2},\boldsymbol{\Sigma}_{2}\right)=N\left(\boldsymbol{\mu}_{1},\boldsymbol{\mu}_{2},\boldsymbol{\Sigma}_{1}+\boldsymbol{\Sigma}_{2}\right)\cdot N\left(\mathbf{x},\boldsymbol{\mu}_{c},\left(\boldsymbol{\Sigma}_{1}^{-1}+\boldsymbol{\Sigma}_{2}^{-1}\right)^{-1}\right)\label{eq:product}
\end{equation}
where
\[
\boldsymbol{\mu}_{c}=\left(\boldsymbol{\Sigma}_{1}^{-1}+\boldsymbol{\Sigma}_{2}^{-1}\right)^{-1}\left(\boldsymbol{\Sigma}_{1}^{-1}\boldsymbol{\mu}_{1}+\boldsymbol{\Sigma}_{2}^{-1}\boldsymbol{\mu}_{2}\right).
\]

\subsubsection{\label{subsec:Inner-product-of}Inner product of two normal distributions}

It follows that

\begin{equation}
\int_{\mathbb{R}^{d}}N\left(\mathbf{x},\boldsymbol{\mu}_{1},\boldsymbol{\Sigma}_{1}\right)N\left(\mathbf{x},\boldsymbol{\mu}_{2},\boldsymbol{\Sigma}_{2}\right)d\mathbf{x}=N\left(\boldsymbol{\mu}_{1},\boldsymbol{\mu}_{2},\boldsymbol{\Sigma}_{1}+\boldsymbol{\Sigma}_{2}\right).\label{eq:inner product of Gaussians}
\end{equation}
Indeed, from (\ref{eq:Convolution of two normal distributions}) we
have 
\begin{eqnarray*}
\int_{\mathbb{R}^{d}}N\left(\mathbf{x},\boldsymbol{\mu}_{1},\boldsymbol{\Sigma}_{1}\right)N\left(\mathbf{x},\boldsymbol{\mu}_{2},\boldsymbol{\Sigma}_{2}\right)d\mathbf{x} & = & \int_{\mathbb{R}^{d}}N\left(2\boldsymbol{\mu}_{1}-\mathbf{x},\boldsymbol{\mu}_{1},\boldsymbol{\Sigma}_{1}\right)N\left(\mathbf{x},\boldsymbol{\mu}_{2},\boldsymbol{\Sigma}_{2}\right)d\mathbf{x}\\
 & = & N\left(2\boldsymbol{\mu}_{1},\boldsymbol{\mu}_{1}+\boldsymbol{\mu}_{2},\boldsymbol{\Sigma}_{1}+\boldsymbol{\Sigma}_{2}\right)\\
 & = & N\left(\boldsymbol{\mu}_{1},\boldsymbol{\mu}_{2},\boldsymbol{\Sigma}_{1}+\boldsymbol{\Sigma}_{2}\right).
\end{eqnarray*}
Alternatively, from (\ref{eq:product}) we have 
\begin{eqnarray*}
\int_{\mathbb{R}^{d}}N\left(\mathbf{x},\boldsymbol{\mu}_{1},\boldsymbol{\Sigma}_{1}\right)N\left(\mathbf{x},\boldsymbol{\mu}_{2},\boldsymbol{\Sigma}_{2}\right)d\mathbf{x} & = & N\left(\boldsymbol{\mu}_{1},\boldsymbol{\mu}_{2},\boldsymbol{\Sigma}_{1}+\boldsymbol{\Sigma}_{2}\right)\int_{\mathbb{R}^{d}}N\left(\mathbf{x},\boldsymbol{\mu}_{c},\left(\boldsymbol{\Sigma}_{1}^{-1}+\boldsymbol{\Sigma}_{2}^{-1}\right)^{-1}\right)d\mathbf{x}\\
 & = & N\left(\boldsymbol{\mu}_{1},\boldsymbol{\mu}_{2},\boldsymbol{\Sigma}_{1}+\boldsymbol{\Sigma}_{2}\right).
\end{eqnarray*}

\bibliographystyle{plain}


\end{document}